\documentclass[a4paper,11pt]{amsart}
\usepackage{xcolor}
\usepackage{amsmath}
\usepackage{amsthm}
\usepackage{amssymb}
\usepackage{comment}
\usepackage{mathrsfs}
\usepackage{graphicx}
\usepackage{subcaption}
\usepackage{float}
\usepackage{enumitem}
\usepackage{calc}
\usepackage{tikz}
\usepackage{tikz-cd}
\usepackage{pgfplots}
\usepackage{adjustbox}
\usetikzlibrary{intersections, calc}
\usetikzlibrary{patterns}
\usepackage{appendix}
\usepackage{subcaption}
\usetikzlibrary{arrows,shapes,positioning}
\usetikzlibrary{decorations.markings}
\tikzstyle arrowstyle=[scale=1]
\tikzstyle directed=[postaction={decorate,decoration={markings,
    mark=at position 0.65 with {\arrow[arrowstyle]{latex}}}}]
\tikzstyle reverse directed=[postaction={decorate,decoration={markings,
    mark=at position 0.5 with {\arrowreversed[arrowstyle]{latex};}}}]
\usetikzlibrary{decorations.pathreplacing}
\theoremstyle{definition}
\newtheorem{thm}{Theorem}[section]
\newtheorem{prop}{Proposition}[section]
\newtheorem{dfn}{Definition}[section]

\newtheorem{lem}{Lemma}[section]
\newtheorem{cor}{Corollary}[section]

\newenvironment{demo}[1]{%
  \trivlist
  \item[\hskip\labelsep
        {\bf #1.}]
}{%
\hfill\qedsymbol
  \endtrivlist
}

\newcommand{\mex}{\operatorname{mex}}
\newcommand{\nimadd}{\overset{*}{+}}
\newcommand{\nimsum}{{\sum}^{*}}
\newcommand{\nimmul}{\overset{*}{\times}}

\numberwithin{equation}{section}
\def\cqs{{(\lower1mm\hbox{*})}}
\newlength{\cqswidth}
\def\Non{\mathbb{N}}
\def\Pos{\mathbb{P}}
\def\Int{\mathbb{Z}}
\def\P{\mathcal{P}}
\def\Par{\mathscr{P}}
\def\F{\mathbb{F}}
\def\Game{\mathscr{A}}
\def\Pgame{\mathscr{G}}

\def\T{\mathscr{T}}
\def\TT{\mathscr{T}\kern-4pt\mathscr{T}}
\def\OI{\mathscr{O}}
\def\I{\mathscr{I}}

\def\len#1{\ell\left({#1}\right)}
\def\ASM#1{\boldsymbol{A}_{#1}}
\def\MOD{\operatorname{mod}}
\def\Y#1{\mathbb{Y}_{#1}}
\def\R#1{\mathcal{R}_{#1}}
\def\e{\boldsymbol{e}}
\def\x{\boldsymbol{x}}
\def\y{\boldsymbol{y}}
\def\z{\boldsymbol{z}}
\def\zero{\boldsymbol{0}}

\def\fnh#1{\mathscr{R}\left({#1}\right)}
\def\fnH#1#2{\Phi_{#1}^{#2}}
\def\fnS#1{\mathscr{S}\left({#1}\right)}
\def\rkf{\rho}
\def\fnb{\mathfrak{p}}

\def\digit#1#2{\gamma_{#1}\left({#2}\right)}
\def\placemax#1{d_{\max}\left({#1}\right)}
\def\placemin#1{d_{\min}\left({#1}\right)}
\def\qbinom#1#2#3{\left[{{#1}\atop{#2}}\right]_{#3}}

\def\bnu{\boldsymbol{\nu}}
\begin{document}
\title[Coin Turning Games on Partially Ordered Sets]
{
	Coin Turning Games \\on Partially Ordered Sets
}
\author{M.~Ishikawa}
\address{Masao Ishikawa, Department of Mathematics, Okayama University}
\email{mi@math.okayama-u.ac.jp}
\author{T.~Ohmoto}
\address{Toyokazu Ohmoto, Department of Mathematics, Okayama University}
\email{toyokazuohmoto@gmail.com}
\author{H. Tagawa}
\address{Hiroyuki Tagawa, Department of Mathematics, Wakayama University}
\email{tagawa@wakayama-u.ac.jp}
\author{Y.~Takayama}
\address{Yoshiki Takayama, Department of Mathematics, Okayama University}
\email{y.t.yoshiki.0611@gmail.com}
%
%
\subjclass[2020]{Primary 06A07; Secondary 91A05, 91A46}
\keywords{
	partially ordered sets,
	alternating sign matrices,
	impartial games,
	coin turning games,
	Sprague--Grundy values
}
\maketitle
\begin{abstract}
A finite impartial game is a two-player game 
in which the players take turns making moves and the game ends after finitely many moves.
In this paper, we study a class of finite impartial games introduced by H.~Lenstra, 
which we call coin turning games.
We focus on two typical classes of coin turning games, 
namely the order ideal games and the rulers, 
distinguished by their choices of turning sets.
For several posets arising from enumerative combinatorics, 
we determine the Sprague-Grundy functions.
In particular, we determine the Sprague-Grundy function of the order ideal game on the ASM poset $\ASM{n}$, 
introduced by J.~Striker in connection with the alternating sign matrices.
\end{abstract}
\tableofcontents
%
%
%
%
%
\section{Introduction}
A coin turning game is a finite impartial game, that is, a type of combinatorial game.
It is played by two players who alternately make moves, and the game terminates after finitely many moves.
H.~Lenstra \cite{Lenstra} studied coin turning games on an arbitrary poset $X$
together with a turning set $\T$,
which is a family of subsets of $X$ satisfying a certain condition (see \S~\ref{ssc:XTgame}).
In this paper, we investigate poset games on several posets arising in combinatorial studies (see, e.g., \cite{ec1}), 
including the poset $B_n(q)$ of all subspaces of the $n$-dimensional vector space over a finite field $\F_{q}$,
the poset $\Pi_{n}$ of set partitions.
Of particular importance is the poset
\begin{equation}\label{eq:ASM-set}
	\ASM{n}
	=
	\left\{
		(x, y, z) \in \Int^{3}
	\, \middle| \,
		x, y, z \geq 0
		\textrm{ and }
		x + y + z \leq n - 2
	\right\},
\end{equation}
which arises as the set of join-irreducible elements of the distributive lattice of alternating sign matrices of size $n$ (see \cite{Br,Ku,MRR1,MRR2,Oh,St}).
This poset was introduced by J.~Striker \cite{SW,St} in connection with the study of generalizations of toggles and gyrations.
The partial order on $\ASM{n}$ is defined as follows (see \cite[Definition~3.3]{St}):
for $(x_{1}, y_{1}, z_{1}),(x_{2}, y_{2}, z_{2})\in\ASM{n}$,
we say $(x_{1}, y_{1}, z_{1})\leq (x_{2}, y_{2}, z_{2})$ if and only if
\begin{equation}\label{eq:ASM-order}
x_{1}\geq x_{2},\, y_{1}\geq y_{2},\, z_{1}\leq z_{2} \text{ and } x_{1}+y_{1}+z_{1}\geq x_{2}+y_{2}+z_{2}.
\end{equation}
Throughout this paper, we refer to $\ASM{n}$ as the \textsl{the ASM poset}.
\par\smallskip
A poset game \cite{Lenstra,Sato} depends not only on the underlying poset $X$ 
but also on the choice of the turning set $\T$.
Common choices of $\T$ lead to well-known games \cite{Sato}.
If $\T$ consists of all two-element subsets $\{a,b\}$ with $a\leq b$,
the resulting game is called turning turtles.
If $\T$ consists of all closed intervals $[a,b]$ with $a\leq b$,
the game is called a ruler.
In addition to these, we introduce another class of poset games obtained by taking $\T$
to be the set of all principal order ideals of $X$ (see \S~\ref{subsec:poset} for the definition).
We refer to the resulting game as the order ideal game (or simply the ideal game).
\par\smallskip
Our goal is to obtain explicit formulas for the Sprague-Grundy functions \cite{WW,ONAG,Grundy,Sprague}
(see Definition~\ref{def:Grundy}) of poset games.
One of the main objectives of this paper is to determine the Sprague-Grundy function of the order ideal game on $\ASM{n}$.
Let $n$ be a positive integer and set $\T=\{\Lambda_{\x}\mid \x\in \ASM{n}\}$,
where $\Lambda_{\x}=\{\y\in\ASM{n}\mid \y\leq\x\}$ denotes the principal order ideal generated by $\x\in\ASM{n}$.
By Lenstra's theorem (see Theorem~\ref{thm:FT_of_T_game}), 
it suffices to determine the Sprague-Grundy value
$g(x, y, z)=g(\{(x, y, z)\})$ for each $(x, y, z) \in \ASM{n}$.
We shall show that $g(x, y, z)$ depends only on two parameters, 
namely the rank $\rho(x,y,z)=n-2-(x+y)$ and $z$.
\begin{thm}\label{th:ASM-ideal}
	For each 
	$(x, y, z) \in \ASM{n}$,
	the Sprague--Grundy value
	of the order ideal game on $\ASM{n}$
	is given by
	\begin{equation}\label{eq:ASM-ideal-grundy}
		g(x, y, z)
		=
		\begin{cases}
			1	&\qquad	\text{$\rho(x,y,z)=0$ or  $\rho(x,y,z)=2z \pm 1$,}\\
			0	&\qquad	\text{otherwise.}
		\end{cases}
	\end{equation}
\end{thm}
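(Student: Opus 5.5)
The plan is to first turn the statement into a parity condition. In the order ideal game the only turning set whose maximum is $\x$ is $\Lambda_{\x}$. So by Lenstra's theorem (Theorem~\ref{thm:FT_of_T_game}) and the definition of the Sprague--Grundy function, the only option from the single-coin position $\{\x\}$ has value $S(\x)=\nimsum_{\y<\x} g(\y)$. Hence $g(\x)=\mex\{S(\x)\}$, which is $1$ if $S(\x)=0$ and $0$ otherwise.

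By induction on the rank this gives $g(\x)\in\{0,1\}$ for all $\x$. Then $S(\x)$ is just the parity of $\#\{\y<\x\mid g(\y)=1\}$, and the recursion becomes: $g(\x)=1$ if and only if this number is even. Equivalently, $\#\{\y\in\Lambda_{\x}\mid g(\y)=1\}$ is odd for every $\x$. Conversely, a $\{0,1\}$-valued function with this property must equal $g$, by induction along any linear extension. Writing $f$ for the right-hand side of \eqref{eq:ASM-ideal-grundy}, it therefore suffices to prove
\[
\#\{\y\in\Lambda_{\x}\mid f(\y)=1\}\equiv 1 \pmod 2\qquad\text{for all }\x\in\ASM{n}.
\]

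Next I will parametrize $\Lambda_{\x}$ explicitly. Put $\x=(x,y,z)$ with $\rho=\rho(\x)$, and let $(x',y',z')\le\x$. Write $x'=x+a$ and $y'=y+b$ with $a,b\ge0$, and set $k=a+b$, so that $\rho(x',y',z')=\rho-k$. The inequalities \eqref{eq:ASM-order} together with membership in $\ASM{n}$ say exactly that $z-k\le z'\le z$, $z'\ge 0$, and $z'\le \rho-k$.

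For fixed $k$ there are $k+1$ choices of $(a,b)$, and $f$ depends only on $(\rho-k,z')$. Hence, modulo $2$, only even $k$ contribute, and
\[
\#\{\y\in\Lambda_{\x}\mid f(\y)=1\}\equiv\sum_{\substack{0\le k\le\rho\\ k\text{ even}}}\#\bigl\{z'\in[\max(0,z-k),\,\min(z,\rho-k)] \bigm| \rho-k=0\text{ or }\rho-k=2z'\pm1\bigr\}.
\]
Write $r=\rho-k$ for the rank in the $k$-th summand.

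The final step, which I expect to be the main obstacle, is to evaluate this sum modulo $2$ by cases on the parity of $\rho$ and on the size of $z$ relative to $\rho$.
\begin{itemize}
\item If $\rho$ is even, every summand has $r$ even. Then only the term $r=0$, i.e.\ $k=\rho$, can contribute, and its $z'$-interval is $\{0\}$, which always occurs. So the count is $1$, as required.
\item If $\rho$ is odd, the condition $r=2z'\pm1$ means $z'=(r\mp1)/2$. For each admissible $r$ I must count how many of these two values lie in $[\max(0,z-\rho+r),\,\min(z,r)]$.
\item For small $r$ both values tend to be admissible, contributing $0$ modulo $2$. Only boundary values of $r$, near where $z-\rho+r$ or $z$ cuts the interval, give a single admissible value.
\end{itemize}
I plan to show that the total number of such single hits is odd precisely in the expected way, namely that it is consistent with $f(\x)=1$ iff $\rho=2z\pm1$ once $\x$ itself is included. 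This is a finite bookkeeping of the endpoints $z'=z$ and $z'=z-k$, and it is where care is needed.
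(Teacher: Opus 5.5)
Your proposal is correct and follows essentially the paper's proof: the same fibre size $|\Lambda_{\x}(r,s)|=\rho-r+1$ (your $k+1$), the same parity reduction that discards fibres of even size, and the same split on the parity of the rank followed by a count of which points $(2i+1,i)$, $(2i+1,i+1)$ lie in the projected ideal; your closed-ideal formulation (``the number of $\y\in\Lambda_{\x}$ with $f(\y)=1$ is odd'') just absorbs the paper's separate subcases in which $\pi(\x_{0})$ is itself one of these points. The bookkeeping you left open does close: for $\rho=2m+1$ and $r=2j+1$, the value $z'=j$ is admissible iff $j\le\min(z,2m-z)$, and $z'=j+1$ is admissible iff $j\le\min(z-1,2m+1-z)$; these two bounds lie in $[-1,m]$ and always differ by exactly one, so exactly one odd rank contributes a single hit and the total is odd.
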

%
%
%
This paper originated as the master's thesis \cite{Ta} of the fourth author.
The paper is organized as follows.
In Section~\ref{sec:ImpartialGames} 
we recall basic facts about finite impartial games and the Sprague-Grundy function.
Readers who are familiar with the elementary theory of finite impartial games may safely skip this section.
In Section~\ref{sec:poset}, 
we introduce notation and preliminaries on partially ordered sets, 
and then describe the definition and basic properties of the ASM poset $\ASM{n}$ in \S~\ref{ssc:posetASM}.
Readers well acquainted with elementary poset theory may skip the first half of this section and return to it as needed.
In Section~\ref{sc:PosetGame},
we define coin turning games and prove the fundamental theorem of coin turning games, namely
Theorem~\ref{thm:FT_of_T_game} (due to Lenstra~\cite{Lenstra}), which is used throughout the paper.
Section~\ref{sc:SG-Analysis} is the main section of the paper. There we compute Sprague-Grundy functions for several types of coin turning games.
Specifically, we study order ideal games and rulers on chains, divisor posets, 
and subspace lattices over finite fields, as well as the order ideal game on the ASM poset.
We conclude by discussing open problems concerning the ruler on the ASM poset and on the poset of set partitions in Section~\ref{sc:open}.
In Appendix~\ref{sc:appendix},
we summarize useful properties of Nim addition and Nim multiplication that are used in the proofs.
In Appendix~\ref{sc:RC},
we present a proof of a characterization of the ruler sequence.
%

%
%
%
%
%
\section{Finite Impartial Games}\label{sec:ImpartialGames}
%
In this section, we define the class of combinatorial games (see \cite{MHRG,WW,ONAG,Sato,Siegel})
that will be studied in the remainder of the paper.
Readers who are familiar with impartial games may safely skip this section and return to it as needed.
%
%
\subsection{Basic notions of finite impartial games}
Any game considered in this paper is played by two players,
and the players alternatively make a move. 
The winner is the player who makes the last move. 
An \textsl{impartial game} is a game satisfying the following conditions (see \cite{MHRG,Siegel}):
\begin{enumerate}[label=(\alph*)] 
		\item
			The possible moves from each position are completely determined by that position
			(that is, there are no \textsl{chance moves} such as rolling dice or shuffling cards).
		\item
			Both players have complete information about the game state at every position.
		\item
			Both players have the same set of available moves from each position.
		\item
			From any given position, only finitely many positions are reachable, and no position can appear twice during a play.
	\end{enumerate} 
	%
%
\begin{dfn}[\cite{WW,ONAG,Lenstra,Sato,Siegel}]\label{def:game}
A game is a pair $(\P, N)$,
where $\P$ is the set of all possible positions
and $N:\P \to 2^{\P}$ is a map that assigns to each position $P\in\P$ 
the set of all possible positions $P'$ that can be reached from $P$ in a single move.
If $P'\in N(P)$,
we say $P'$  is an \textsl{option} of the current position $P$,
and we write $P \rightarrow P'$.
If $N(P)=\emptyset$,
then $P$ is called an \textsl{ending position}.
We denote by $\epsilon$ the set of all ending positions.
The game determined by the pair $(\P,N)$ is denoted by $\Game(\P,N)$.
\end{dfn}
	Starting from a position ${P}_{0}\in\P$,
	the game is played by two players who alternatively make moves,
	\[
	{P}_{0} \rightarrow {P}_{1} \rightarrow {P}_{2} \rightarrow \cdots \rightarrow {P}_{m},
	\]
	where at each position $P_i$, the player in turn chooses an option $P_{i+1}\in N(P_i)$.
If ${P}_{m}\in\epsilon$,
	we call such a sequence a \textsl{play} (or a \textsl{game sequence}) 
	with the \textsl{starting position} $P_{0}$
	and the \textsl{ending position} $P_{m}$,
	and we say that its \textsl{length} is $m$.
Throughout this paper, we assume that every play has finite length.
The player who makes the last move $P_{m-1}\to P_{m}$ is called the \textsl{winner},
and the opposing player is called the \textsl{looser}.
For a position $P\in\P$, let $\len{P}$ denote the maximum length among all plays starting from $P$;
this number is called the \textsl{length} of the position $P$.
It follows immediately that, if $P'$ is an option of $P$, then $\len{P'}<\len{P}$.
\par\smallskip
For any proper subset $T\subsetneq \Non$,
where $\Non$ denotes the set of non-negative integers, we define
$\mex\left( T \right)=\min\left( \Non \setminus T \right)$,
and call $\mex\left( T \right)$ the \textsl{minimal-excluded number} of $T$.
In particular, $\mex( \emptyset ) = 0$.
%
The Sprague-Grundy function is defined inductively using the $\mex$ operator; see \cite{Grundy,Sprague} for details. 
%
\begin{dfn}[Sprague-Grundy function \cite{Grundy,Sprague}]\label{def:Grundy}
Let $\Game = \Game(\P, N)$ be a finite impartial game.
	The \textsl{Sprague-Grundy value} (or \textsl{Grundy value} for short) ${g}_{\Game}(P) \in \Non$ 
	of a position $P \in \P$ is defined inductively by the following conditions:
	\begin{enumerate}[label=(\roman*)] 
		\item\label{it:Grundy1}
			${g}_{\Game}(P) = 0$
			if $P$ is an ending position (equivalently, $\ell(P)=0$);
		\item\label{it:Grundy2}
			$
				{g}_{\Game}(P) 
				=
				\mex\left\{
					{g}_{\Game}(P')
				\, \middle| \,
					P' \in N(P)
				\right\}
			$
			if $\ell(P)>0$.
	\end{enumerate} 
\end{dfn}
This inductive definition determines a function $g_{\Game}:\P\to\Non$ which we call the \textsl{Sprague-Grundy function} (or \textsl{Grundy function} for short) of $\Game$.
For brevity, we will often write $g(P)$ in place of $g_{\Game}(P)$.
%
%
%
\subsection{Combined game}
Given two finite impartial games $\Game_1$ and $\Game_2$,
we give the definition of the combined game $\Game_1+\Game_2$
in which, on each turn, a player can either make a move in the first game, while leaving the second game untouched, 
or make a move in the second game, while leaving the first game untouched.
The precise definition is given below.
%
\begin{dfn}
Let $\Game_1=\Game(\P_1,N_1)$ and $\Game_2=\Game(\P_2,N_2)$ be finite impartial games.
Define the set $\P$ of positions to be the Cartesian product $\P=\P_1\times \P_2$.
For a position $P=(P_1,P_2)\in \P$, we define its set of options by
\begin{equation}\label{eq:CombinedGame}
	N(P)=\{(P_1',P_2),(P_1,P_2')\ |\ P_1'\in N_{1}(P_1),P_2'\in N_{2}(P_2)\}.
\end{equation}
We call the game $\Game=(\P,N)$ the \textsl{combined game} of $\Game_1$ and $\Game_2$,
and denote it by $\Game=\Game_1+\Game_2$.
\end{dfn}
%
%
%
\begin{dfn}[\cite{ONAG,Lenstra,Sato}]\label{def:Nimadd}
If $a$ and $b$ are non-negative integers,
the \textsl{NIM-sum} $a \nimadd b$ of $a$ and $b$ is defined inductively by
\begin{equation}\label{eq:nimadd}
a \nimadd b 
:= 
\mex\bigl(
	\{ a' \nimadd b \mid 0 \leq a' < a \}
	\cup
	\{ a \nimadd b' \mid 0 \leq b' < b \}
\bigr) .
\end{equation}
This inductive definition determines a binary operation
$\nimadd \colon \Non \times \Non \rightarrow \Non$, $(a, b) \mapsto a \nimadd b$,
which is also referred to as \textsl{NIM-addition}.
Let $A$ be a finite subset of $\Non$.
We denote the NIM-sum over the set $A$
by ${\nimsum}_{a \in A}a$.
In particular, the NIM-sum over the empty set is $0$, which follows immediately from the definition \eqref{eq:nimadd}.
\end{dfn}
We summarize the useful properties of NIM-addition in the Appendix~\ref{sc:appendix}.
\par
If $\epsilon_1$ and $\epsilon_2$ denote the set of the ending positions of $\Game_1$ and $\Game_2$, respectively,
then it is immediate that $\epsilon=\epsilon_1\times\epsilon_2$ is the set of ending positions of the combined game $\Game=\Game_1+\Game_2$.
Furthermore, for any position $P=(P_1,P_2)\in \P$,
it follows directly from the definition that $\len{P}=\len{P_1,P_2}=\len{P_1}+\len{P_2}$.
%
%
%
\begin{thm}[\cite{Lenstra}]\label{gamewathm}
Let $\Game_1=\Game(\P_1,N_1)$ and $\Game_2=\Game(\P_2,N_2)$ be finite impartial games
with Grundy functions $g_{\Game_1}$ and $g_{\Game_2}$, respectively.
For the combined game $\Game=\Game_1+\Game_2=\Game(\P,N)$,
the Grundy function is given by
\begin{equation}
	g_{\Game_1+\Game_2}(P_1,P_2)=g_{\Game_1}(P_1)\nimadd g_{\Game_2}(P_2)
\end{equation}
for all $(P_1,P_2)\in \P=\P_1\times\P_2$.
\end{thm}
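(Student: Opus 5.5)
Our plan is a simultaneous induction on $\len{P_1}+\len{P_2}=\len{P_1,P_2}$, following the usual proof of the Sprague--Grundy theorem. For the base case, $\len{P_1,P_2}=0$ means that both $P_1$ and $P_2$ are ending positions. Then $g_{\Game}(P_1,P_2)=0=0\nimadd 0$, by Definition~\ref{def:Grundy}\ref{it:Grundy1} and the fact that $0\nimadd 0=\mex(\emptyset)=0$ by \eqref{eq:nimadd}.

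For the inductive step, write $a=g_{\Game_1}(P_1)$ and $b=g_{\Game_2}(P_2)$. Every option of $(P_1,P_2)$ has strictly smaller length, so the induction hypothesis together with \eqref{eq:CombinedGame} gives
\[
\{g_{\Game}(Q)\mid Q\in N(P_1,P_2)\}=\{g_{\Game_1}(P_1')\nimadd b\mid P_1'\in N_1(P_1)\}\cup\{a\nimadd g_{\Game_2}(P_2')\mid P_2'\in N_2(P_2)\}.
\]
It therefore suffices to prove two things: $a\nimadd b$ does not occur in this set, and every $c<a\nimadd b$ does.

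\emph{Exclusion.} Since $a=\mex\{g_{\Game_1}(P_1')\}$, we have $g_{\Game_1}(P_1')\neq a$ for every option $P_1'$. Nim-addition is cancellative ($x\nimadd b=a\nimadd b$ implies $x=a$), so $g_{\Game_1}(P_1')\nimadd b\neq a\nimadd b$. The same argument handles the second coordinate. Cancellativity is one of the standard Appendix properties, following from \eqref{eq:nimadd}: if $x<a$ then $x\nimadd b$ is among the excluded values in the $\mex$ defining $a\nimadd b$, and symmetrically if $a<x$.

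\emph{Attainment.} Let $c<a\nimadd b$. By \eqref{eq:nimadd}, $c$ lies in the set whose $\mex$ is $a\nimadd b$. Hence either $c=a'\nimadd b$ for some $a'<a$, or $c=a\nimadd b'$ for some $b'<b$. In the first case, $a=\mex\{g_{\Game_1}(P_1')\}$ gives an option $P_1'$ with $g_{\Game_1}(P_1')=a'$. Then $(P_1',P_2)$ is an option of $(P_1,P_2)$ with Grundy value $c$. The second case is symmetric.

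Combining exclusion and attainment, the $\mex$ of the set of option values equals $a\nimadd b$, which completes the induction.

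The main obstacle is the cancellation property of $\nimadd$, since it must be extracted from the recursive definition \eqref{eq:nimadd} rather than from the binary XOR description. The argument above handles it directly: an equality $x\nimadd b=a\nimadd b$ with $x\neq a$ would put the $\mex$ value inside the set being excluded, which is impossible.
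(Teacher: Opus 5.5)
Your proof is correct and follows the paper's argument: the same induction on $\ell(P_1,P_2)$, with the inductive step reduced to the identity $a\nimadd b=\mex(\{s\nimadd b\mid s\in S\}\cup\{a\nimadd t\mid t\in T\})$, where $S,T$ are the sets of Grundy values of the options of $P_1,P_2$; the paper simply cites this identity as Lemma~\ref{lemma:NIM_add_and_mex}. Your exclusion and attainment steps are an inline proof of that lemma, using the injectivity property in Proposition~\ref{prop:FT_NIM_add} and the fact that every value below a $\mex$ lies in the set, so nothing is missing.
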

\begin{demo}{Proof}
We proceed by induction on $\len{P_1,P_2}$.
If $\len{P_1,P_2}=0$, then $(P_1,P_2)\in\epsilon$.
Hence we have $P_1\in \epsilon_1$ and $P_2\in \epsilon_2$ which implies
\[
	g_{\Game}(P_1,P_2)=0=0\nimadd 0=g_{\Game_1}(P_1)\nimadd g_{\Game_2}(P_2).
\]
Now let $\len{P_1,P_2}>0$ and assume that  
\[
	g_{\Game}(\overline{P_1},\overline{P_2})=g_{\Game_1}(\overline{P_1})\nimadd g_{\Game_2}(\overline{P_2})
\]
holds for any pair $(\overline{P_1},\overline{P_2})$ with $\len{\overline{P_1},\overline{P_2}}<\len{P_1,P_2}$.
By the definition \eqref{eq:CombinedGame} of combined game and the Grundy value, we have
\begin{align*}
g_{\Game}(P_1,P_2)
			&=\mex\{g_{\Game}(P_1',P_2),g_{\Game}(P_1,P_2')\ |\ P_1'\in N(P_1),P_2'\in N(P_2)\}.
\end{align*}
Since $\len{P_1',P_2}<\len{P_1,P_2}$ and $\len{P_1,P_2'}<\len{P_1,P_2}$,
the induction hypothesis yields
\begin{align*}
	&g_{\Game}(P_1,P_2)=\mex\{g_{\Game_1}(P'_1)\nimadd g_{\Game_2}(P_2),
	\\&\qquad\qquad g_{\Game_1}(P_1)\nimadd g_{\Game_2}(P_2')\ |\ P_1'\in N(P_1),P_2'\in N(P_2)\}.
\end{align*}
Since
$\mex\{g_{\Game_1}(P'_1)\ |\ P_1'\in N(P_1)\}=g_{\Game_1}(P_1)$ and 
$\mex\{g_{\Game_2}(P'_2)\ |\ P_2'\in N(P_2)\}=g_{\Game_2}(P_2)$
by \eqref{eq:nimadd},
we obtain
\[
	g_{\Game}(P_1,P_2)=g_{\Game_1}(P_1)\nimadd g_{\Game_2}(P_2)
\]
by Lemma~\ref{lemma:NIM_add_and_mex}.
\end{demo}
%
%
%
\subsection{NIM multiplication}
%
%
\begin{dfn}[\cite{ONAG,Lenstra,Sato}]\label{def:NimMul}
If $a$ and $b$ are nonnegative integers,
the \textsl{NIM multiplication} $a \nimmul b$ of $a$ and $b$ is defined inductively by
\begin{equation}\label{eq:NIM-mul}
a \nimmul b 
:= 
\mex\bigl\{a' \nimmul b \nimadd a \nimmul b' \nimadd a' \nimmul b'\mid 0 \leq a' < a \text{ and } 0 \leq b' < b\bigr\}.
\end{equation}
It follows immediately from the definition that $a \nimmul b$ is commutative, 
and that $a \nimmul 0=0$ and $a \nimmul 1=a$ hold for every $a\in\Non$.
Further useful properties of NIM multiplication are summarized in Appendix~\ref{sc:appendix}.
\end{dfn}
%

%
%
%
%
%
\section{Partially Ordered Sets}\label{sec:poset}
In this section, we review notation and preliminaries on partially ordered sets and introduce the specific posets studied in this paper (see \cite{ec1} for details).
Readers who are familiar with poset theory may safely skip the basic definitions and consult \S~\ref{ssc:posetASM}
for the properties of the ASM poset $\ASM{n}$.

%
\subsection{Preliminaries on Posets}\label{subsec:poset}
	A \textsl{partially ordered set} (or \textsl{poset}, for short) $P$ 
	is a set equipped with a binary relation, denoted by $\leq$,
	satisfying the following axioms for all $x, y, z \in P$:
		(P1)
		$x \leq x$,
		(P2)
		$x = y$ if $x \leq y$ and $y \leq x$,
		(P3)
		$x \leq z$ if $x \leq y$ and $y \leq z$.
	The relation $\leq$ is called the \textsl{partial order} on $P$.
	We write $x < y$
	if $x \leq y$ and $x \neq y$.
	We may also denote the poset by $(P, {\leq}_{P})$ (or simply $(P, \leq)$)
	to emphasize the underlying order relation.
	The axioms (P1), (P2) and (P3) are called 
	\textsl{reflectivity}, \textsl{antisymmetry} and \textsl{transitivity}, respectively.
Two elements $x,y \in P$ are said to be \textsl{comparable}
if either $x \leq y$ or $y \leq x$; otherwise they are \textsl{incomparable}.
\par
A \textsl{chain} (or \textsl{totally ordered set}) 
is a poset in which any two elements are comparable.
An \textsl{antichain} 
is a subset $A$ of a poset $P$ such that 
any two distinct elements of $A$ are incomparable.
If $x < y$ and no element $u \in P$ satisfies $x < u < y$,
we say that $y$ \textsl{covers} $x$ 
(or $x$ is \textsl{covered} by $y$), 
and write $y \gtrdot x$ (or $x \lessdot y$). 
\par
We say that $P$ has a minimum element $\hat0$ if there exists an element $\hat0\in P$ such that $t\geq\hat0$ for all $t\in P$.
Similarly, $P$ has a maximum element $\hat1$ if there exists $\hat1\in P$ such that $t\leq\hat1$ for all $t\in P$. 
\par
If $P$ and $Q$ are posets, 
then the \textsl{direct (or cartesian) product} of $P$ and $Q$ is the poset $P\times Q$
on the set $\left\{ (s,t) \,\middle|\, s\in P\text{ and } t \in Q\right\}$ 
with order relation defined by $(s,t) \leq (s',t')$ in $P\times Q$ if and only if $s \leq s'$ in $P$ and $t \leq t'$ in $Q$.
%
\par
Let $(P, {\leq}_{P})$ and $(Q, {\leq}_{Q})$ be posets.
We say that a map $\varphi \colon P \rightarrow Q$ is \textsl{order-preserving}
if $\varphi(s) \, {\leq}_{Q} \, \varphi(t)$
for any $s, t \in P$ such that $s \, {\leq}_{P} \, t$.
Further more, if $\varphi$ is bijective and the inverse map is order-preserving,
then we say that $\varphi$ is \textsl{order isomorphism},
$P$ and $Q$ are \textsl{order isomorphic},
denoted $P \cong Q$.
If $C$ is a chain with $|C| = |P|$, 
then an order preserving bijection $\sigma: P \to C$ is called a \textsl{linear extension} of $P$. 
\par
By an \textsl{induced subposet} of a poset $P$, 
we mean a subset $Q$ of $P$ 
equipped with the partial order inherited from $P$; that is,
for $s,t\in Q$, we have $s \leq t$ in $Q$ if and only if $s \leq t$ in $P$.
Then we say that the subset $Q$ of $P$ has the \textsl{induced order}.
Unless stated otherwise, the term \textsl{subposet} will always mean an induced subposet.
\par
A \textsl{chain} of a poset $P$ is a subposet $C\subseteq P$ that is totally ordered.
A chain $C$ is called \textsl{maximal} 
if it is not properly contained in any larger chain of $P$. 
The \textsl{length} of a chain $C$ is defined by $\ell(C)= |C| - 1 $.
A poset $P$ is called \textsl{graded of rank $n$}
if every maximal chain of $P$ has the same length $n$.
In this case there exists a \textsl{rank function}
$\rho \colon P \rightarrow \left\{ 0,1,...,n \right\}$ 
such that $\rho(x) = 0$ for every minimal element $x \in P$,
and $\rho(y) = \rho(x) + 1$
whenever $y$ covers $x$. 
If $\rho(x) = i$, 
then we say that $x$ has rank $i$.
\par
An \textsl{order ideal} of a poset $P$ is a subset $I \subseteq P$
such that if $t \in I$ and $s \leq t$, then $s \in I$. 
The collection of all order ideals of $P$, partially ordered by inclusion, forms a poset denoted by $J(P)$.
If $P$ is finite, there is a bijection between the set of order ideals of $P$ 
and the set of antichains of $P$.
Given an antichain $A \subseteq P$,
the corresponding order ideal is 
$\left\{ x \in P \, \middle| \, x \leq a \textrm{ for some } a \in A \right\}$,
which is called the order ideal \textsl{generated} by $A$.
In particular, for $x \in P$ 
we write $\Lambda_{x} = \left\{ t \in P \, \middle| \, t \leq x \right\}$,
and call $\Lambda_{x}$ the \textsl{principal order ideal} generated by $x$.
\par
We also introduce a special type of subposet called an interval.
For elements $x, y \in P$ with $x \leq y$,
the \textsl{closed interval} 
$
[x, y]:=\left\{ t \in P \, \middle| \, x \leq t \leq y \right\}
$
is the induced subposet of $P$
 consisting of all elements lying between $x$ and $y$.
%
\begin{dfn}[\cite{ec1}]
\begin{enumerate}[leftmargin=17pt,label=(\arabic*)]
\item
For $n \in \Pos$,
where $\Pos$ denotes the set of positive integers,
let $[n]:=\{1,2,\dots,n\}$.
With the usual order, $[n]$ is a poset in which every two elements are comparable.
It is called the \textsl{chain of length $n-1$} (or the \textsl{$n$-element chain}).
\item
For $n \in \Pos$, 
let $D_n$ denote the set of all positive divisors of $n$,
partially ordered by divisibility: $i\leq j$ in $D_n$ if and only if $j$ is divisible by $i$ (denoted $i|j$).
We call $D_n$ the \textsl{poset of divisors of $n$}.
If $n=\prod_{i=1}^{r}p_{i}^{e_i}$ is the prime factorization of $n$ with $e_i>0$,
then there is an order isomorphism $D_{n}\cong [e_{1}+1]\times\cdots\times[e_{r}+1]$.
\item
Let $n\in\Non$ and let $q$ be a prime power.
Let $B_n(q)$ denote the poset of all subspaces of the $n$-dimensional vector space $\F_{q}^{n}$, ordered by inclusion. 
This poset is a lattice, called the \textsl{subspace lattice} of $\F_{q}^{n}$.
For instance Figure~\ref{fig:B32} illustrates the lattice $B_3(2)$ of the $3$-dimensional vector space over the finite field $\F_{2}$.
Here $\e_{i}$ denotes the standard basis vectors of $\F_{2}^3$,
and $\langle B\rangle$ denotes the subspace spanned by the vectors in $B$.
\end{enumerate}
\end{dfn}
\begin{figure}[htb]
\adjustbox{scale=0.6,center}{%
\begin{tikzcd}
    & & & \langle \e_1,\e_2,\e_3 \rangle & & & \\
    \langle \e_1,\e_2 \rangle \arrow{rrru} & \langle \e_1,\e_3 \rangle \arrow{rru} & \langle \e_2,\e_3 \rangle \arrow{ru} & \langle \e_1,\e_2+\e_3 \rangle \arrow{u} & \langle \e_2,\e_1+\e_3 \rangle \arrow{lu} & \langle \e_3,\e_1+\e_2 \rangle \arrow{llu} & \langle \e_1+\e_2,\e_1+\e_3 \rangle \arrow{lllu} \\
    \langle \e_1 \rangle \arrow{u}\arrow{ru}\arrow{rrru} & \langle \e_2 \rangle \arrow{lu}\arrow{ru}\arrow{rrru} & \langle \e_3 \rangle \arrow{lu}\arrow{ru}\arrow{rrru} & \langle \e_1+\e_2 \rangle \arrow{lllu}\arrow{rru}\arrow{rrru} & \langle \e_1+\e_3 \rangle \arrow{lllu}\arrow{u}\arrow{rru} & \langle \e_2+\e_3 \rangle \arrow{lllu}\arrow{llu}\arrow{ru} & \langle \e_1+\e_2+\e_3 \rangle \arrow{lllu}\arrow{llu}\arrow{lu} \\
    & & & \zero \arrow{lllu}\arrow{llu}\arrow{lu}\arrow{u}\arrow{ru}\arrow{rru}\arrow{rrru} \\
\end{tikzcd}
}
   \caption{$B_3(2)$\label{fig:B32}}
\end{figure}

\subsection{The Poset of Set Partitions}\label{ssc:SetPar}
Let $n \in \Pos$. 
We can make the set $\Pi_{n}$ of all partitions of the $n$ element set $[n]$ into a poset 
(also denoted $\Pi_{n}$) by defining $\pi\leq\sigma$ in $\Pi_{n}$ 
if every block of $\pi$ is contained in a block of $\sigma$. 
We then say that $\pi$ is a \textsl{refinement} of $\sigma$ and that $\Pi_{n}$ consists of the set partitions of $[n]$ ordered
by refinement.
We call $\pi$ a \textsl{set partition of $[n]$}
and we call $\Pi_{n}$ the \textsl{poset of set partitions of $[n]$}.
\par
More generally, if $S=\{i_{1},i_{2},\dots,i_{n}\}$ is any $n$-element set,
then we denote by $\Pi_{n}(S)$ the set of all partitions of $S$,
endowed with the same refinement order.
For example, If $S=\{1,3,4,6\}$, then $\pi=\left\{\{1\},\{4\},\{3,6\}\right\}\in\Pi_{4}(S)$ is a set partition with three blocks.
Writing $\pi_{1}=\{1\}$, $\pi_{2}=\{4\}$, and $\pi_{3}=\{3,6\}$,
we have $\pi=\{\pi_{1},\pi_{2},\pi_{3}\}$.
The partition $\pi$ is a refinement of $\sigma=\left\{\{1,4\},\{3,6\}\right\}$,
and therefore $\pi\leq\sigma$ in $\Pi_{4}(S)$.
The poset $\Pi_{n}$ is a fundamental object in algebraic combinatorics.
It is a graded lattice, where the rank of a set partition 
$\pi$ is $\rkf(\pi)=n-|\pi|$,
with $|\pi|$ denoting the number of blocks of $\pi$.
For instance, Figure~\ref{fig:Pi4} depicts $\Pi_{4}$.
\begin{figure}[hbt]
\adjustbox{scale=0.6,center}{%
\begin{tikzcd}
              &    &   & 1234\\
 1,234 \arrow{rrru} & 2,134 \arrow{rru} & 3,124 \arrow{ru} & 4,123 \arrow{u} & 12,34 \arrow{lu} & 13,24 \arrow{llu} & 14,23 \arrow{lllu} \\
 & 1,2,34  \arrow{lu}\arrow{u}\arrow{rrru}  & 1,3,24 \arrow{llu}\arrow{u}\arrow{rrru} & 1,4,23 \arrow{rrru}\arrow{u}\arrow{lllu} & 2,3,14 \arrow{lllu}\arrow{llu}\arrow{rru} & 2,4,13 \arrow{llllu}\arrow{llu}\arrow{u} & 3,4,12 \arrow{llllu}\arrow{lllu}\arrow{llu} \\
           &    &   & 1,2,3,4 \arrow{llu}\arrow{lu}\arrow{u}\arrow{ru}\arrow{rru}\arrow{rrru} \\
\end{tikzcd}
}
   \caption{$\Pi_{4}$\label{fig:Pi4}}
\end{figure}
%
\subsection{A Poset Associated with Alternating Sign Matrices}\label{ssc:posetASM}
The partially ordered set $\ASM{n}$ is introduced by J.~Striker~\cite{St}
in connection with 
the lattice structure of alternating sign matrices.
We have already introduced the poset $\ASM{n}$ in \eqref{eq:ASM-set} and \eqref{eq:ASM-order}.
This poset is graded, with rank function \cite[Proposition~3.4]{St} given by
\begin{equation}\label{eq:ASM-rank}
\rho(x,y,z)=n-2-(x+y).
\end{equation}
The cover relations in $\ASM{n}$ are described as follows.
%
\begin{prop}[\cite{St}]
In $\ASM{n}$,
an element $(x, y, z)$ covers each of the elements
\[
(x + 1, y, z),\quad (x, y + 1, z),\quad (x + 1, y, z - 1),\quad (x, y + 1, z-1),
\]
whenever these elements belong to $\ASM{n}$.
\end{prop}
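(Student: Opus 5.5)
The plan is to reduce everything to the difference vectors between comparable elements, where the order \eqref{eq:ASM-order} becomes a simple cone condition. For $\x=(x,y,z)$ and $\x'=(x',y',z')$ in $\ASM{n}$, set
\[
a=x'-x,\qquad b=y'-y,\qquad c=z-z'.
\]
By \eqref{eq:ASM-order}, $\x'\leq\x$ holds if and only if the vector $(a,b,c)$ lies in the cone
\[
C=\{(a,b,c)\in\Int^{3}\mid a,b,c\geq 0,\ a+b\geq c\}.
\]
The cone $C$ is closed under addition, which is just transitivity in these coordinates. The four elements listed in the proposition correspond to
\[
(1,0,0),\quad (0,1,0),\quad (1,0,1),\quad (0,1,1),
\]
and each of these visibly lies in $C$. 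So each listed element is strictly below $\x$ whenever it belongs to $\ASM{n}$.

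Next we show that nothing lies strictly between. Suppose $\x'<\bar{\x}<\x$, and write $d=d_{1}+d_{2}$, where $d_{1},d_{2}\in C$ are the difference vectors of $\bar{\x}<\x$ and $\x'<\bar{\x}$. Both $d_{1}$ and $d_{2}$ are nonzero. For each of the four listed vectors we have $a+b=1$, so one of $d_{1},d_{2}$ has $a+b=0$. For that vector, $a=b=0$, and the cone condition $c\leq a+b=0$ forces $c=0$, so it is zero, a contradiction. Hence each listed element is covered by $\x$.

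We also show the converse: every cover of $\x$ is of one of the four listed forms. Take $d=(a,b,c)\in C\setminus\{0\}$ with $d$ not among the four listed vectors. Then $a+b\geq 2$, since $a+b\leq 1$ together with $c\leq a+b$ leaves only the four listed vectors (and zero). By symmetry suppose $a\geq 1$, and put
\[
d_{1}=(1,0,\varepsilon),\qquad \varepsilon=\min(c,1).
\]
The remainder is $d-d_{1}=(a-1,b,c-\varepsilon)$. It has nonnegative entries, and it satisfies
\[
(a-1)+b\geq c-\varepsilon,
\]
because either $\varepsilon=1$ and then this is $a+b\geq c$, or $c=0$. It is nonzero because $a-1+b\geq 1$. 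So $d$ splits into two nonzero vectors of $C$.

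The step needing actual care is checking that the intermediate point
\[
\bar{\x}=(x+1,\,y,\,z-\varepsilon)
\]
lies in $\ASM{n}$. Each coordinate of $\bar{\x}$ lies between the corresponding coordinates of $\x'$ and $\x$, so all coordinates are nonnegative; in particular $z-\varepsilon\geq z'\geq 0$. The coordinate sum of $\bar{\x}$ lies between $x+y+z$ and $x'+y'+z'$, since $d_{1}$ and $d-d_{1}$ both have nonnegative $a+b-c$. Both of these sums are at most $n-2$, so $\bar{\x}\in\ASM{n}$. This shows that no such $\x'$ is covered by $\x$, which completes the description of the cover relations. As a consistency check, it also recovers the rank function \eqref{eq:ASM-rank}, since each cover lowers $x+y$ by exactly one.
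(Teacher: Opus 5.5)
Your proof is correct. The paper gives no argument of its own here (it simply attributes the statement to Striker), so the comparison is between a citation and your self-contained verification.

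Your steps all hold:
\begin{itemize}
\item The reformulation of \eqref{eq:ASM-order} as membership of the difference vector in the cone $C=\{a,b,c\geq 0,\ a+b\geq c\}$ is exact.
\item Additivity of $a+b$ cleanly rules out intermediate elements for the four listed vectors.
\item For the converse, the splitting $d=(1,0,\varepsilon)+(a-1,b,c-\varepsilon)$ keeps $\bar{\x}$ in $\ASM{n}$. This works because the coordinates and the coordinate sum of $\bar{\x}$ are sandwiched between those of $\x'$ and $\x$.
\end{itemize}

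What your route buys over the citation is a derivation directly from the inequality description the paper actually uses. It also gives the converse: there are no other covers. This shows that \eqref{eq:ASM-order} is precisely the transitive closure of the four listed relations, and it justifies the Hasse diagram in Figure~\ref{fig:AA_5}.

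As a bonus, the same picture settles the check left to the reader in Proposition~\ref{lem:isomorphism_eta}. The maps $\xi$ and $\eta$ act on difference vectors by $(a,b,c)\mapsto(b,a,c)$ and $(a,b,c)\mapsto(a,b,a+b-c)$, and both of these map $C$ onto itself.

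One small caveat concerns your closing remark. Recovering \eqref{eq:ASM-rank} also needs that the minimal elements are exactly those with $x+y=n-2$, and for gradedness, that the maximal ones are those with $x=y=0$. Both follow at once from the cover list, but not from ``each cover lowers $x+y$ by one'' alone.
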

\begin{figure}[htbp]
	\centering
	\begin{tikzpicture}[scale=0.8]
		\coordinate (base_x) at (225:1);
		\coordinate (base_y) at (1, 0);
		\coordinate (base_z) at (0, 1);
		\draw[->, help lines, dashed] 
			($0*(base_x) + 0*(base_y) + 0*(base_z)$)
			--
			($4*(base_x) + 0*(base_y) + 0*(base_z)$);
		\node at ($4.25*(base_x) + 0*(base_y) + 0*(base_z)$){$x$};
		\draw[->, help lines, dashed] 
			($0*(base_x) + 0*(base_y) + 0*(base_z)$)
			--
			($0*(base_x) + 4*(base_y) + 0*(base_z)$);
		\node at ($0*(base_x) + 4.25*(base_y) + 0*(base_z)$){$y$};
		\draw[->, help lines, dashed] 
			($0*(base_x) + 0*(base_y) + 0*(base_z)$)
			--
			($0*(base_x) + 0*(base_y) + 4*(base_z)$);
		\node at ($0*(base_x) + 0*(base_y) + 4.25*(base_z)$){$z$};
		\foreach \x in {0, 1, 2, 3}{
			\coordinate (v_\x_0_0) at ($\x*(base_x) + 0*(base_y) + 0*(base_z)$);
		}
		\foreach \x in {0, 1, 2}{
			\coordinate (v_\x_1_0) at ($\x*(base_x) + 1*(base_y) + 0*(base_z)$);
		}
		\foreach \x in {0, 1}{
			\coordinate (v_\x_2_0) at ($\x*(base_x) + 2*(base_y) + 0*(base_z)$);
		}
		\foreach \x in {0}{
			\coordinate (v_\x_3_0) at ($\x*(base_x) + 3*(base_y) + 0*(base_z)$);
		}
		\foreach \x in {0, 1, 2}{
			\coordinate (v_\x_0_1) at ($\x*(base_x) + 0*(base_y) + 1*(base_z)$);
		}
		\foreach \x in {0, 1}{
			\coordinate (v_\x_1_1) at ($\x*(base_x) + 1*(base_y) + 1*(base_z)$);
		}
		\foreach \x in {0}{
			\coordinate (v_\x_2_1) at ($\x*(base_x) + 2*(base_y) + 1*(base_z)$);
		}
		\foreach \x in {0, 1}{
			\coordinate (v_\x_0_2) at ($\x*(base_x) + 0*(base_y) + 2*(base_z)$);
		}
		\foreach \x in {0}{
			\coordinate (v_\x_1_2) at ($\x*(base_x) + 1*(base_y) + 2*(base_z)$);
		}
		\foreach \x in {0}{
			\coordinate (v_\x_0_3) at ($\x*(base_x) + 0*(base_y) + 3*(base_z)$);
		}
		\foreach \x/\xx in {3/2, 2/1, 1/0}{
			\draw[directed] (v_\x_0_0) -- (v_\xx_0_0);
		}
		\foreach \x/\xx in {2/1, 1/0}{
			\draw[directed] (v_\x_1_0) -- (v_\xx_1_0);
		}
		\foreach \x/\xx in {1/0}{
			\draw[directed] (v_\x_2_0) -- (v_\xx_2_0);
		}
		\foreach \y/\yy in {3/2, 2/1, 1/0}{
			\draw[directed] (v_0_\y_0) -- (v_0_\yy_0);
		}
		\foreach \y/\yy in {2/1, 1/0}{
			\draw[directed] (v_1_\y_0) -- (v_1_\yy_0);
		}
		\foreach \y/\yy in {1/0}{
			\draw[directed] (v_2_\y_0) -- (v_2_\yy_0);
		}
		\foreach \x/\xx in {2/1, 1/0}{
			\draw[directed] (v_\x_0_1) -- (v_\xx_0_1);
		}
		\foreach \x/\xx in {1/0}{
			\draw[directed] (v_\x_1_1) -- (v_\xx_1_1);
		}
		\foreach \y/\yy in {2/1, 1/0}{
			\draw[directed] (v_0_\y_1) -- (v_0_\yy_1);
		}
		\foreach \y/\yy in {1/0}{
			\draw[directed] (v_1_\y_1) -- (v_1_\yy_1);
		}
		\foreach \x/\xx in {1/0}{
			\draw[directed] (v_\x_0_2) -- (v_\xx_0_2);
		}
		\foreach \y/\yy in {1/0}{
			\draw[directed] (v_0_\y_2) -- (v_0_\yy_2);
		}
		\foreach \x/\z/\xx/\zz in {3/0/2/1, 2/1/1/2, 1/2/0/3}{
			\draw[directed] (v_\x_0_\z) --(v_\xx_0_\zz);
		}
		\foreach \x/\z/\xx/\zz in {2/0/1/1, 1/1/0/2}{
			\draw[directed] (v_\x_1_\z) --(v_\xx_1_\zz);
		}
		\foreach \x/\z/\xx/\zz in {1/0/0/1}{
			\draw[directed] (v_\x_2_\z) --(v_\xx_2_\zz);
		}
		\foreach \y/\z/\yy/\zz in {3/0/2/1, 2/1/1/2, 1/2/0/3}{
			\draw[directed] (v_0_\y_\z) --(v_0_\yy_\zz);
		}
		\foreach \y/\z/\yy/\zz in {2/0/1/1, 1/1/0/2}{
			\draw[directed] (v_1_\y_\z) --(v_1_\yy_\zz);
		}
		\foreach \y/\z/\yy/\zz in {1/0/0/1}{
			\draw[directed] (v_2_\y_\z) --(v_2_\yy_\zz);
		}
		\foreach \x/\z/\xx/\zz in {2/0/1/1, 1/1/0/2}{
			\draw[directed] (v_\x_0_\z) --(v_\xx_0_\zz);
		}
		\foreach \x/\z/\xx/\zz in {1/0/0/1}{
			\draw[directed] (v_\x_1_\z) --(v_\xx_1_\zz);
		}
		\foreach \y/\z/\yy/\zz in {2/0/1/1, 1/1/0/2}{
			\draw[directed] (v_0_\y_\z) --(v_0_\yy_\zz);
		}
		\foreach \y/\z/\yy/\zz in {1/0/0/1}{
			\draw[directed] (v_1_\y_\z) --(v_1_\yy_\zz);
		}
		\foreach \x/\z/\xx/\zz in {1/0/0/1}{
			\draw[directed] (v_\x_0_\z) --(v_\xx_0_\zz);
		}
		\foreach \y/\z/\yy/\zz in {1/0/0/1}{
			\draw[directed] (v_0_\y_\z) --(v_0_\yy_\zz);
		}
		\foreach \x in {0, 1, 2, 3}{
			\fill[draw=black, fill=white] (v_\x_0_0) circle (2pt);
		}
		\foreach \x in {0, 1, 2}{
			\fill[draw=black, fill=white] (v_\x_1_0) circle (2pt);
		}
		\foreach \x in {0, 1}{
			\fill[draw=black, fill=white] (v_\x_2_0) circle (2pt);
		}
		\foreach \x in {0}{
			\fill[draw=black, fill=white] (v_\x_3_0) circle (2pt);
		}
		\foreach \x in {0, 1, 2}{
			\fill[draw=black, fill=white] (v_\x_0_1) circle (2pt);
		}
		\foreach \x in {0, 1}{
			\fill[draw=black, fill=white] (v_\x_1_1) circle (2pt);
		}
		\foreach \x in {0}{
			\fill[draw=black, fill=white] (v_\x_2_1) circle (2pt);
		}
		\foreach \x in {0, 1}{
			\fill[draw=black, fill=white] (v_\x_0_2) circle (2pt);
		}
		\foreach \x in {0}{
			\fill[draw=black, fill=white] (v_\x_1_2) circle (2pt);
		}
		\foreach \x in {0}{
			\fill[draw=black, fill=white] (v_\x_0_3) circle (2pt);
		}
		\node at ($3*(base_x) - 0.5*(base_y) + 0*(base_z)$){$3$};
		\node at ($0.5*(base_x) + 3.5*(base_y) + 0*(base_z)$){$3$};
		\node at ($0*(base_x) + 0.5*(base_y) + 3*(base_z)$){$3$};
		\node at ($0*(base_x) - 1*(base_y) + 3*(base_z)$){$(0, 0, 3)$};
		\node at ($1*(base_x) - 1*(base_y) + 2*(base_z)$){$(1, 0, 2)$};
	\end{tikzpicture}
	\caption{$\ASM{5}$}
	\label{fig:AA_5}
\end{figure}
We illustrate $\ASM{5}$ in Figure~\ref{fig:AA_5}.
In this figure, an open circle represents an element of $\ASM{5}$,
and the arrows represent the cover relations.
For example,
$(0, 0, 3)$ covers $(1, 0, 2)$ in $\ASM{5}$,
that is, there is an arrow from $(1, 0, 2)$ to $(0, 0, 3)$.
\par
The poset $\ASM{n}$
can equivalently be described as a layering of successively smaller type $A$ positive root posets; see \cite{SW}.
%
%

%
%
%
%
%
\section{Sprague-Grundy Theory of Coin Turning Games}\label{sc:PosetGame}
In this section, we describe a finite impartial game induced from the poset structure of a given poset; see \cite{Lenstra,Sato}.
The key theorem in this section is Theorem~\ref{thm:FT_of_T_game},
which provides a method to compute the Grundy functions for games of this type,
and frequently used in the rest of the paper.
\par
We also introduce the symmetric difference of two sets.
Let $X$ be a set,
and let $A,B\subseteq X$.
We define the set
$A \ominus B:=\left( A \cup B \right) \setminus \left( A \cap B \right)$,
and call it the \textsl{symmetric difference} of $A$ and $B$.
The symmetric difference is a commutative operation, and it is also associative.
\subsection{Coin Turning Games}\label{ssc:XTgame}
Throughout this section, we fix a finite poset $X$ 
and a family $\T$ of subsets of $X$
which satisfies 
\begin{quote}
		($\sharp$)\qquad each $T \in \T$ has a (unique) maximum element.
\end{quote}
For each $T\in\T$,
we denote by $m_{T}$ the unique maximal element of $T$.
For $x\in X$,
we define $\T_{x}=\{T\in\T\mid m_{T}=x\}$,
and for a subset $P\subseteq X$,
we define $\T_{P}=\{T\in\T\mid m_{T}\in P\}$.
%
\begin{dfn}[Coin Turning Game \cite{Lenstra,Sato}]\label{def:PosetGame}
	Let $X$ be a finite poset,
	and $\T$ a family of subsets of $X$
	which satisfies the condition~($\sharp$).
	We define a game 
	$\Game$ 
	by the following rule:
\begin{enumerate}[label=(\alph*)] 
\item\label{it:PG01}
	The set of positions is $\P = {2}^{X}$,
	the collection of all subsets of $X$.
\item\label{it:PG02}
	Given a position $P\in\P$, 
	the player in turn can choose a subset $T \in \T_{P}$ 
	and make the move $P\to P \ominus T$,
\item\label{it:PG03}
	A position $P$ is an ending position if $\T_{P}=\emptyset$.
	Equivalently, the set of ending positions is
			$
				\epsilon
				=
				\left\{
					P \in \P
				\, \middle| \,
					{m}_{T} \notin P
					\textrm{ for any }
					T \in \T
				\right\}
			$.
\end{enumerate} 
	We call this game the \textsl{$(X,\T)$-game} (or simply \textsl{$\T$-game}),
	and we denote it by $\Game=\Pgame(X, \T)$.
\end{dfn}
We refer to games of this type as \textsl{(finite) coin turning game}.
Let $D:=\left\{m_{T}\,\middle|\, T\in \T\right\}$ and $E:=X\setminus D$.
Then a position $P\in \P$ is an ending positions if and only if there is no element $P \cap D=\emptyset$,
that is, if and only if $P\subseteq E$.
Consequently, $\epsilon=2^E$.
\par\smallskip
We will see that no position can occur more than once during a play of the game.
In particular, from any given position only finitely many positions are reachable.
To establish this fact, we prove the following proposition.
%
\begin{prop}
Let $X$ be a finite poset and let $\T$ be a family of subsets of $X$ satisfying condition $(\sharp)$.
Fix a linear extension $\tau: X \to C$, where $C=[n]$ is the chain with $n=|X|$.
Let $\rho:C\to \Non$ be the rank function of $C$,
defined by $\rho(i)=i-1$  for $i\in[n]$.
Define a map $F:\P=2^X\to \Non$ by
\[
F(P)=\sum_{x\in P}2^{\rho(\tau(x))}.
\]
Thus $F$ assigns to each position $P\in\P$ a non-negative integer.
Then,
for any position $P$ and any option $P'\in N(P)$, we have $F(P')<F(P)$.
\end{prop}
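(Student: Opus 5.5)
The argument is a direct comparison of binary expansions. Since $\tau$ is a bijection onto $[n]$, the map $P\mapsto F(P)$ reads each subset $P\subseteq X$ as a binary number. The bit in position $\rho(\tau(x))=\tau(x)-1$ is $1$ exactly when $x\in P$. Distinct elements occupy distinct bit positions, so $F$ is injective. For two subsets $P\neq P'$, the comparison of $F(P)$ and $F(P')$ is decided by the element of $P\ominus P'$ with the largest value of $\tau$, that is, the largest bit in which they differ.

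Now fix a position $P$ and an option $P'=P\ominus T$ with $T\in\T_P$, so $m_T\in P$. Then $P\ominus P'=T$, since the symmetric difference is associative and $P\ominus P=\emptyset$. By condition ($\sharp$), every $t\in T$ satisfies $t\leq m_T$ in $X$. Because $\tau$ is order-preserving, $\tau(t)\leq\tau(m_T)$, and injectivity gives $\tau(t)<\tau(m_T)$ for $t\neq m_T$. Hence $m_T$ carries the highest bit among the elements of $T$.

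I then split $F(P')-F(P)$ into the contribution of $m_T$ and that of $T\setminus\{m_T\}$. Since $m_T\in P$ and $m_T\in T$, we have $m_T\notin P'$, so this bit drops by $2^{\tau(m_T)-1}$. Each other $t\in T$ toggles, changing $F$ by at most $2^{\tau(t)-1}$, and bits outside $T$ are untouched. Therefore
\[
F(P')-F(P)\leq -2^{\tau(m_T)-1}+\sum_{t\in T\setminus\{m_T\}}2^{\tau(t)-1}\leq -2^{\tau(m_T)-1}+\sum_{j=0}^{\tau(m_T)-2}2^{j}=-1<0 .
\]
The second inequality holds because the values $\tau(t)-1$ for $t\in T\setminus\{m_T\}$ are distinct integers in $\{0,\dots,\tau(m_T)-2\}$.

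The only point needing care is that the maximum of $T$ in the poset order is also the $\tau$-maximum. This is exactly where the linear extension and condition ($\sharp$) are used, and it is handled by the argument above. Everything else is the elementary geometric-sum bound.
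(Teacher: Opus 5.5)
Your proof is correct and follows essentially the same route as the paper: the move removes the bit of $m_T$, while condition~($\sharp$) and the linear extension force every other toggled element to occupy a strictly lower bit. You also make explicit the geometric-sum bound $\sum_{j=0}^{\tau(m_T)-2}2^{j}<2^{\tau(m_T)-1}$, which the paper leaves implicit when it says the newly added terms correspond to strictly smaller powers of $2$.
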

\begin{demo}{Proof}
If $P'\in N(P)$, then there exists $T\in\T$ such that its unique maximal element $x=m_{T}\in P$ satisfies $P'=P\ominus T$.
In particular, we have $x\in P$, but $x\not\in P'$.
Moreover, for any element $y\in P'\setminus P$,
condition ($\sharp$) implies that $y<x$.
Since $\tau$ is a linear extension, this yields $\rho(\tau(y))<\rho(\tau(x))$.
Consequently, in passing from $P$ to $P'$,
the term $2^{\rho(\tau(x))}$ is removed from the sum defining $F(P)$,
while any newly added terms correspond to strictly smaller powers of $2$.
Hence we conclude that $F(P')<F(P)$.
\end{demo}
This proposition means that this function $F(P)$ strictly decreases at each move and therefore never remains constant or increases.
Consequently, no position can appear more than once during a play, and every play terminates after finitely many moves.
\par\smallskip
The reason we call the $(X,\T)$-game a coin-turning game is as follows.
Imagine a coin is placed on each element of $X$, and each coin shows either heads or tails.
For a position $P\in\P$, the coin at a vertex $x\in X$ shows heads if $x\in P$, and shows tails otherwise.
On a player's turn, the player chooses a set $T \in \T_{P}$ 
and flips all coins on the vertices in $T$.
If a player cannot find any $T \in \T_{P}$, then that player loses.
From this perspective, we refer to the poset $X$ as the \textsl{board}, and to each subset 
$T \in \T$ as a \textsl{turning set}.
\subsection{The Fundamental Theorem of Coin Turning Games}
We call the following as the fundamental theorem of coin turning game.
It plays a key role in our paper.
%
\begin{thm}[Fundamental Theorem of Coin Turning Games \cite{Lenstra}]\label{thm:FT_of_T_game}
	\begin{subequations}
	Let $X$ be a finite poset,
	and let $\T$ be a family of subsets of $X$ satisfying condition ($\sharp$).
	Define a map 
	$g_{X,\T}:X \rightarrow \Non$
	inductively by
	\begin{equation}\label{eq:FT_of_T_game_1}
		g_{X,\T}(x):=
		\mex
		\Bigg\{
			\underset{t \in T\setminus\{x\}}{\,\nimsum}g_{X,\T}(t)
		\, \Bigg| \,	
			T \in \T_{x}
		\Bigg\}.
	\end{equation}
	Recall that ${\nimsum}_{t\in\emptyset}t=0$.
	For any subset $P \subseteq X$, we set
	\begin{equation}\label{eq:FT_of_T_game_2}
		g_{X,\T} \left( P \right)
		=
		\underset{x \in P}{\,\,\nimsum} 
			\, 
			g_{X,\T}(x)
			.
	\end{equation}
	Then the map $g_{X,\T}:2^{X}\to \Non$ agrees with the Grundy function $g_{\Game}$ of the game $\Game=\Pgame(X,\T)$.
	When no confusion can arise, we write $g_{\T}$ or simply $g$ in place of $g_{X,\T}$.
	\end{subequations}
\end{thm}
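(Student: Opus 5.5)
We prove that $g_{X,\T}$ satisfies the two defining conditions of the Grundy function, by induction on the value $F(P)$ from the preceding proposition. Since $F$ strictly decreases along every move, this is a legitimate induction over positions. First, $g_{X,\T}(x)$ in \eqref{eq:FT_of_T_game_1} is well defined by induction along a linear extension $\tau$. Indeed, every $T\in\T_x$ has maximum $x$, so every $t\in T\setminus\{x\}$ satisfies $t<x$ and hence $\tau(t)<\tau(x)$. It then remains to show, for every position $P$, that
\[
g_{X,\T}(P)=\mex\left\{g_{X,\T}(P\ominus T)\,\middle|\,T\in\T_P\right\},
\]
with the empty set giving $0$ at ending positions.

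The first ingredient is additivity under symmetric difference. Every element of $P\cap T$ is counted twice, and $a\nimadd a=0$ (Appendix~\ref{sc:appendix}), so
\[
g(P\ominus T)=g(P)\nimadd g(T).
\]
Writing $T=\{m_T\}\cup(T\setminus\{m_T\})$, we get $g(P\ominus T)=g(P)\nimadd g(m_T)\nimadd s_T$, where $s_T=\nimsum_{t\in T\setminus\{m_T\}}g(t)$. By the definition \eqref{eq:FT_of_T_game_1}, $s_T\neq g(m_T)$. Hence $g(m_T)\nimadd s_T\neq 0$, and so $g(P\ominus T)\neq g(P)$ for every option. This is the ``no option has the same value'' half.

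For the ``every smaller value is attained'' half, let $0\le v<g(P)$ and put $w=v\nimadd g(P)\neq 0$. This is the step that needs the standard Nim argument. Let $2^k$ be the highest bit of $w$. Then $g(P)$ has bit $k$ equal to $1$, so some $x\in P$ has $g(x)$ with bit $k$ set. Put $u=g(x)\nimadd w$; then $u<g(x)$. Since $g(x)$ is the mex of the values $s_T$ over $T\in\T_x$, there is $T\in\T_x\subseteq\T_P$ with $s_T=u$. For this $T$,
\[
g(P\ominus T)=g(P)\nimadd g(x)\nimadd u=g(P)\nimadd w=v .
\]
Combining both halves, $g(P)$ is the mex of its options' values. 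At an ending position $P\subseteq E$, every $x\in P$ has $\T_x=\emptyset$, so $g(x)=\mex\emptyset=0$ and $g(P)=0$, matching condition~\ref{it:Grundy1}. By uniqueness of the inductively defined Grundy function, $g_{X,\T}=g_\Game$.

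The only delicate point is the bit argument. It relies on identifying NIM-addition with binary XOR (from Appendix~\ref{sc:appendix}), or equivalently on the property that $a\nimadd w<a$ whenever the top bit of $w$ is set in $a$. If only the mex definition \eqref{eq:nimadd} is at hand, one can instead invoke Lemma~\ref{lemma:NIM_add_and_mex} after decomposing the game. Writing $P=\{x_1,\dots,x_r\}$, the $(X,\T)$-game from $P$ behaves like a combined game of single-coin games, so Theorem~\ref{gamewathm} gives the same conclusion. However, the direct argument via $g(P\ominus T)=g(P)\nimadd g(T)$ avoids having to justify that identification.
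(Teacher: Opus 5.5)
Your proof is correct, and its skeleton matches the paper's: you show that $P\mapsto\nimsum_{x\in P}g(x)$ satisfies $g(P)=\mex\{g(P\ominus T)\mid T\in\T_P\}$ and then conclude by induction (the paper inducts on $\ell(P)$, you on $F(P)$; both work). The difference is in how that mex identity is proved.

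The paper gets it in one stroke from Corollary~\ref{cor:NIM_add_and_mex}. It applies the corollary with $a_x=g(x)$ and $S_x=\{s_T\mid T\in\T_x\}$ for $x\in P$, so that $\mex S_x=g(x)$ by \eqref{eq:FT_of_T_game_1}. It then uses the cancellation $\nimsum_{y\in P\setminus\{m_T\}}g(y)\nimadd s_T=g(P\ominus T)$. This route uses only the group laws and injectivity of NIM-addition, never the binary description, and it is the same mechanism as in Theorem~\ref{gamewathm}.

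You instead split the identity into two halves. The ``every smaller value is reached'' half uses the classical highest-bit argument from Nim, which needs Theorem~\ref{thm:nimaddd=oplus}. In return your argument is self-contained (the paper leaves Corollary~\ref{cor:NIM_add_and_mex} to the reader). It is also constructive: it names the coin $x\in P$ and the set $T\in\T_x$ that move $P$ to any prescribed value $v<g(P)$, in particular to a $0$-position.

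Your closing aside is slightly off. Falling back on the mex lemma does not require identifying the position with a combined game of single-coin games. That identification would not follow from Theorem~\ref{gamewathm} as stated anyway, since a move at one coin can flip other coins of $P$. The paper simply applies the corollary directly to the Nim-sum, with no decomposition of the game.
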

\begin{demo}{Proof}
We proceed by induction on the length $\ell(P)$.
\par\smallskip
If $\ell(P)=0$ then $P\subseteq E$, so $\T_{P}=\emptyset$.
Hence $g_{X,\T}(x)=\mex(\emptyset)=0$ for all $x\in P$,
and therefore $g_{X,\T}(x)=\underset{x \in P}{\nimsum}g_{X,\T}(x)=0$, 
which agrees with the Grundy value of an ending position.
\par\smallskip
Now assume $\ell(P)>0$.
By definition,
\[
g_{X,\T}(x)=\mex\biggl(\biggl\{\underset{t\in T\setminus\{x\}}{\nimsum}g_{X,\T}(t) \,\bigg|\, T\in \T_{x}\biggr\}\biggr).
\]
Using the standard property of the $\mex$ operator with Nim addition
(cf. \eqref{eq:NIM_add_and_mex_multi}),
we see that the right-hand side of \eqref{eq:FT_of_T_game_1} equals
\begin{align*}
\underset{x \in P}{\nimsum} g_{X,\T}(x)
&=\mex\biggl(
\bigcup_{x \in P}
\bigcup_{T\in\T_{x}}
\biggl\{
\underset{y\in P\setminus \{x\}}{\nimsum} g_{X,\T}(y)\nimadd \underset{t\in T\setminus\{x\}}{\nimsum}g_{X,\T}(t)
\biggr\}
\biggr)
\\
&=\mex\biggl(
\bigcup_{T\in\T_{P}}
\biggl\{
\underset{y\in P\setminus \{m_{T}\}}{\nimsum} g_{X,\T}(y)\nimadd \underset{t\in T\setminus\{m_{T}\}}{\nimsum}g_{X,\T}(t)
\biggr\}
\biggr).
\end{align*}
If $y \in P\cap T$ then $g_{X,\T}(y)$ is cancelled by the same term in the second sum.
Hence we obtain
\[
\underset{x \in P}{\nimsum} g_{X,\T}(x)
=\mex\biggl\{\underset{y\in P\ominus T}{\nimsum} g_{X,\T}(y)
\mid T\in \T_{P}
\biggr\}.
\]
Since $\ell(P\ominus T)<\ell(P)$,
we have 
$\underset{y\in P\ominus T}{\nimsum} g_{X,\T}(y)=g_{X,\T}(P\ominus T)$
by induction hypothesis.
This shows that $\underset{x \in P}{\nimsum} g_{X,\T}(x)$ is the Grundy value of $P$
from Definition~\ref{def:Grundy}~\ref{it:Grundy2} and Definition~\ref{def:PosetGame}~\ref{it:PG02}.
This completes the proof.
\end{demo}
%
\subsection{The Product Theorem for Coin Turning Games}
Let $\Game_{1}=\Pgame(X_{1}, \T_{1})$ and $\Game_{2}=\Pgame(X_{2}, \T_{2})$ be coin turning Games, where
 $X_{1}$ and $X_{2}$ are finite posets and $\T_{1}$ and $\T_{2}$ are their families of turning sets, respectively.
Let $X=X_{1}\times X_{2}$ be the direct product poset.
Define
\[
\T=\{T_{1}\times T_{2} \,|\, T_{1}\in\T_{1},\,  T_{2}\in\T_{2}\}.
\]
Then, for each $T_{1}\times T_{2}\in \T$, the element $(m_{T_{1}},m_{T_{2}})$ is the unique maximum element of $T_{1}\times T_{2}$.
Hence the family $\T$ satisfies condition~($\sharp$).
%
\begin{thm}[Product Theorem for Coin Turning Games \cite{Lenstra}]\label{th:nim-prod-app}
\begin{subequations}
Then the Grundy function $g_{X,\T}$ is given by
\begin{equation}
g_{X,\T}(x_{1},x_{2})=g_{X_{1},\T_{1}}(x_{1}) \nimmul g_{X_{2},\T_{2}}(x_{2})\label{eq:nim-prod-app1} 
\end{equation}
for all $(x_{1},x_{2})\in X$.
Consequently, for arbitrary subsets $P_{1}\subseteq X_{1}$ and $P_{2}\subseteq X_{2}$,
we have
\begin{equation}\label{eq:nim-prod-app2}
g_{X,\T}(P_{1}\times P_{2})=\underset{(x_{1},x_{2})\in P_{1}\times P_{2}}{\nimsum}
g_{X,\T}(x_{1},x_{2}).
\end{equation}
\end{subequations}
\end{thm}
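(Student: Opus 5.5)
We prove \eqref{eq:nim-prod-app1} by induction on $(x_1,x_2)$ along any linear extension of $X=X_1\times X_2$. Equation \eqref{eq:nim-prod-app2} then follows directly from Theorem~\ref{thm:FT_of_T_game}, since $P_1\times P_2$ is a position of the $(X,\T)$-game and its Grundy value is the NIM-sum of the values of its elements. The whole proof therefore reduces to a single-coin recursion. Write $g_1=g_{X_1,\T_1}$, $g_2=g_{X_2,\T_2}$, $g=g_{X,\T}$, and set $a=g_1(x_1)$, $b=g_2(x_2)$.

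By \eqref{eq:FT_of_T_game_1}, $g(x_1,x_2)$ is the $\mex$ of the values
\[
\underset{(t_1,t_2)\in T_1\times T_2\setminus\{(x_1,x_2)\}}{\nimsum} g(t_1,t_2),
\qquad T_1\in(\T_1)_{x_1},\ T_2\in(\T_2)_{x_2}.
\]
Every $(t_1,t_2)$ in such a sum is strictly below $(x_1,x_2)$, so the induction hypothesis gives $g(t_1,t_2)=g_1(t_1)\nimmul g_2(t_2)$. Put $\alpha=\nimsum_{t_1\in T_1\setminus\{x_1\}}g_1(t_1)$ and $\beta=\nimsum_{t_2\in T_2\setminus\{x_2\}}g_2(t_2)$. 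Splitting $T_1\times T_2\setminus\{(x_1,x_2)\}$ into the three pieces $\{x_1\}\times(T_2\setminus\{x_2\})$, $(T_1\setminus\{x_1\})\times\{x_2\}$ and $(T_1\setminus\{x_1\})\times(T_2\setminus\{x_2\})$, and using distributivity of $\nimmul$ over $\nimadd$ (Appendix~\ref{sc:appendix}), the sum becomes
\[
a\nimmul\beta\nimadd\alpha\nimmul b\nimadd\alpha\nimmul\beta .
\]

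By the definition of $g_1(x_1)$, the set of possible $\alpha$ (as $T_1$ ranges over $(\T_1)_{x_1}$) is a set $A\subseteq\Non$ with $\mex A=a$. Likewise the set $B$ of possible $\beta$ satisfies $\mex B=b$, and $T_1,T_2$ can be chosen independently. So it remains to prove the following lemma: if $\mex A=a$ and $\mex B=b$, then
\[
\mex\{a\nimmul\beta\nimadd\alpha\nimmul b\nimadd\alpha\nimmul\beta\mid\alpha\in A,\ \beta\in B\}=a\nimmul b.
\]
This lemma is the main obstacle; the rest is bookkeeping.

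Since $\{0,\dots,a-1\}\subseteq A$ and $\{0,\dots,b-1\}\subseteq B$, the set contains every value $a'\nimmul b\nimadd a\nimmul b'\nimadd a'\nimmul b'$ with $a'<a$, $b'<b$. By \eqref{eq:NIM-mul} these already include every number below $a\nimmul b$. It remains to show that $a\nimmul b$ itself does not occur. The expression equals $a\nimmul b\nimadd(a\nimadd\alpha)\nimmul(b\nimadd\beta)$, so it equals $a\nimmul b$ iff $(a\nimadd\alpha)\nimmul(b\nimadd\beta)=0$. Because $\alpha\ne a$ and $\beta\ne b$, both factors are nonzero, and NIM multiplication has no zero divisors (the nonnegative integers form a field under $\nimadd,\nimmul$; Appendix~\ref{sc:appendix}). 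Hence the value is never $a\nimmul b$, and the $\mex$ equals $a\nimmul b$, completing the induction.

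The base case needs no separate treatment. If $x_1$ is not the maximum of any set in $\T_1$, then $a=0$ and $A=\emptyset$, so the $\mex$ is $0=a\nimmul b$, and the argument above covers this automatically.
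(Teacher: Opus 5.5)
Your proof is correct, and it is more explicit than the paper's. The paper gives no real argument. It says only that the result follows from the Sprague--Grundy theorem for sums (Theorem~\ref{gamewathm}) together with an interpretation of $a\nimmul b$ as the Grundy value of a product of positions, and defers to Lenstra. The paper never proves that interpretation, since it defines $\nimmul$ only through the mex formula \eqref{eq:NIM-mul}; its sketch therefore rests on an external result whose content is essentially your lemma.

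You instead verify the single-coin recursion \eqref{eq:FT_of_T_game_1} directly. After the three-piece split and distributivity, everything reduces to showing
\[
\mex\{a\nimmul\beta\nimadd\alpha\nimmul b\nimadd\alpha\nimmul\beta\mid \alpha\in A,\ \beta\in B\}=a\nimmul b
\]
whenever $\mex A=a$ and $\mex B=b$. This isolates the one nontrivial point the paper's one-line justification hides. The option-value sets $A$ and $B$ need not be the initial segments $\{0,\dots,a-1\}$ and $\{0,\dots,b-1\}$, so one must rule out that extra values such as $\alpha>a$ produce $a\nimmul b$. Your factorization $a\nimmul b\nimadd(a\nimadd\alpha)\nimmul(b\nimadd\beta)$ handles this cleanly.

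One small correction: the appendix does not state that $(\Non,\nimadd,\nimmul)$ is a field. You only need the absence of zero divisors. That follows from the cancellation property listed there, or directly from \eqref{eq:NIM-mul}: for $a,b>0$, the choice $a'=b'=0$ puts $0$ into the mex set, so $a\nimmul b\neq0$.

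Your route relies on distributivity of $\nimmul$ over $\nimadd$, which the paper also only states. In exchange you get a self-contained proof that uses only Theorem~\ref{thm:FT_of_T_game} and \eqref{eq:NIM-mul}. You are also right that \eqref{eq:nim-prod-app2} as stated is just \eqref{eq:FT_of_T_game_2}. With distributivity it further equals $g_{X_1,\T_1}(P_1)\nimmul g_{X_2,\T_2}(P_2)$, which is presumably the intended content.
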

This follows from the Sprague-Grundy theorem for sums of impartial games, 
together with the definition of Nim multiplication as the Grundy value of the Cartesian product of two positions.
This product theorem will be used for divisor posets (see Theorem~\ref{th:Grundy-Dn-ruler}) 
and for the poset of set partitions (see Theorem~\ref{th:SetParGrundy}, \ref{it:Pi-2}).
%
%
%
\subsection{Two Typical Classes of Coin Turning Games}
As described above,
a coin turning game $\Game=\Pgame(X,\T)$ depends not only on the underlying poset $X$
but also on the choice of the turning set $\T$.
We first present three typical choices for the turning set.
\begin{dfn}
Let $X$ be a poset.
\begin{enumerate}[leftmargin=20pt,label=(\roman*)] 
	\item\label{item:TT}
	If we take
	$\TT=\left\{\{ x, y \}\, \middle| \,x, y \in X,\,x \leq y \right\}$ 
	as the turning set,
	then the game $\Pgame\left( X, \TT\right)$ is called the \textsl{turning turtles} on $X$.
	\item\label{item:OIG}
	If we take the turning set to be $\OI=\left\{\Lambda_{x} \, \middle| \, x \in X\right\}$,
	the family of all principal order ideals of $X$,
	of all principal order ideals of $X$  as the turning set,
	then the game $\Pgame(X,\OI)$ is called the \textsl{order ideal game} on $X$.
	\item\label{item:Ruler}
	If we take the turning set to be 
	$\I=\left\{[x,y]\, \middle| \, x,y\in X,\, x\leq y\right\}$ 
	the family of all intervals of $X$, then the game $\Pgame(X,\I)$ is called the \textsl{ruler} on $X$.
\end{enumerate} 
\end{dfn}
%
%
In what follows, cases~\ref{item:OIG} and~\ref{item:Ruler} constitute the primary objects of study, 
since the problem is usually straightforward in case~\ref{item:TT}.
Accordingly, the main objective of this paper is 
to compute the Sprague-Grundy functions of the coin turning games $\Pgame(X,\T)$ 
in cases~\ref{item:OIG} and~\ref{item:Ruler}.
Among these, the case~\ref{item:Ruler} is substantially more challenging.
\par\smallskip
%
%
The following proposition is elementary and an immediate consequence of the definitions together with Theorem~\ref{thm:FT_of_T_game},
but extremely useful for simplifying the computation of Grundy values by exploiting symmetries of posets.
It will be used to prove Lemma~\ref{lem:g_value_isomorphic} and Lemma~\ref{lem:isomorphism_eta} 
for the ASM posets.
%
\begin{prop}\label{prop:g_value_order_isomorphism_general}
	Let $X_{1}$ and $X_{2}$ be finite posets, and let $\T_{1}$ and $\T_{2}$ be families of turning sets on $X_{1}$ and $X_{2}$, respectively.
	Suppose that $f:X_{1}\to X_{2}$ is an order-preserving bijection such that
\begin{equation}\label{eq:iso-T}
\T_{2}=\{f\left(T\right)\mid T\in \T_{1}\}.
\end{equation}
Then the Grundy functions satisfy
	$
		g_{\T_{1}}\left( x \right) 
		= 
		g_{\T_{2}}\left( f(x) \right)
	$
	for all $x\in X_{1}$.
	Hence $g_{\T_{1}}\left( P \right)=g_{\T_{2}}\left( f(P) \right)$ for any subset $P\subseteq X_{1}$.
\end{prop}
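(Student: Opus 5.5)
We prove the statement by strong induction on $\ell(x)$, where $\ell(x)$ is the length (maximum play length) of the position $\{x\}$ in $\Pgame(X_1,\T_1)$. Equivalently, we may induct along a linear extension of $X_1$. Before the induction, we record two structural facts.

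\emph{Fact 1: $f$ respects maxima.} Since $f$ is an order-preserving bijection, for each $T\in\T_1$ with maximum $m_T$ we have $f(t)\le f(m_T)$ for all $t\in T$. Hence $f(m_T)$ is the maximum of $f(T)$, that is, $m_{f(T)}=f(m_T)$. This uses only order-preservation of $f$, not of $f^{-1}$.

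\emph{Fact 2: $f$ maps $\T_{1,x}$ bijectively onto $\T_{2,f(x)}$.} The map $T\mapsto f(T)$ is injective on subsets because $f$ is a bijection, and it maps $\T_1$ onto $\T_2$ by \eqref{eq:iso-T}. By Fact 1, $f(T)\in(\T_2)_{f(x)}$ if and only if $m_T=x$. So it restricts to a bijection $(\T_1)_x\to(\T_2)_{f(x)}$. Moreover, $f(T)\setminus\{f(x)\}=f(T\setminus\{x\})$.

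\emph{Induction step.} We use the recursive formula \eqref{eq:FT_of_T_game_1} of Theorem~\ref{thm:FT_of_T_game}. Suppose $g_{\T_1}(t)=g_{\T_2}(f(t))$ for every $t<x$ in $X_1$. Every $t\in T\setminus\{x\}$ with $T\in(\T_1)_x$ satisfies $t<x$, so the hypothesis applies to it. Then
\[
g_{\T_2}(f(x))=\mex\Bigl\{\underset{s\in f(T)\setminus\{f(x)\}}{\nimsum}g_{\T_2}(s)\Bigm| T\in(\T_1)_x\Bigr\}
=\mex\Bigl\{\underset{t\in T\setminus\{x\}}{\nimsum}g_{\T_2}(f(t))\Bigm| T\in(\T_1)_x\Bigr\}.
\]
The first equality reindexes the $\mex$ over $(\T_2)_{f(x)}$ using Fact 2. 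The second substitutes $s=f(t)$, which is valid because $f(T)\setminus\{f(x)\}=f(T\setminus\{x\})$ and $f$ is injective. By the induction hypothesis the last expression equals $g_{\T_1}(x)$. The base case, where $x$ is minimal or $(\T_1)_x=\emptyset$, is covered by the same computation: either both $\mex$-sets are empty, or both are the set $\{0\}$ coming from singleton turning sets.

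\emph{Subsets.} The claim for a subset $P$ then follows from \eqref{eq:FT_of_T_game_2}. Since $f$ is injective, $g_{\T_2}(f(P))=\nimsum_{x\in P}g_{\T_2}(f(x))=g_{\T_1}(P)$.

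The only delicate point is that the induction must run over elements $t<x$ in $X_1$ and does not need to compare anything inside $X_2$. This is why order-preservation in one direction suffices. Working only with $X_1$'s order in the hypothesis, and using Fact 1 to transport the maxima, makes the argument clean.
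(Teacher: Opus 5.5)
Your proof is correct and follows essentially the route the paper intends: the paper gives no proof beyond calling the statement an immediate consequence of the definitions and the recursion \eqref{eq:FT_of_T_game_1} of Theorem~\ref{thm:FT_of_T_game}, and your argument (maxima transported by $f$, the bijection $(\T_1)_x\to(\T_2)_{f(x)}$, comparison of the two $\mex$-sets) fills this in. One small correction: your induction should be stated as well-founded induction on the finite poset $X_1$ (or along a linear extension), which is what your hypothesis ``for every $t<x$'' actually uses, rather than as induction on $\ell(\{x\})$; the two are not equivalent, since $t<x$ does not imply $\ell(\{t\})<\ell(\{x\})$ (for instance, take the chain $t<x$ with $\T_1=\{\{t\}\}$, where $\ell(\{x\})=0<1=\ell(\{t\})$).
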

\subsection{Order Ideal Games}
\par\smallskip
In the order ideal game,
we obtain the following lemma as a direct application of Theorem~\ref{thm:FT_of_T_game}.
Although this lemma is simple, it plays a crucial role in \S~\ref{sbsec:ASM-ideal} 
in the proof of Theorem~\ref{th:ASM-ideal}
concerning order ideal games on the ASM posets.
%
\begin{lem}\label{lem:OrderI01}
	Let $X$ be a finite poset.
	In the order ideal game on $X$,
	the Grundy function
	$g_{\OI}:X \to \Non$
	is determined by
	\begin{equation}\label{eq:OIgame01}
		g_{\OI}\left( x \right)
		=
		\begin{cases}
			0	
			&	
			\textrm{if } 
			\left|
			\left\{ 
				t \in {\Lambda}_{x} \setminus \{ x \}
			\, \middle| \,
				g_{\OI}\left( t \right)
				= 
				1
			\right\}\right|
			\text{ is odd}
			, \\
			1	
			&
			\textrm{if } 
			\left|
			\left\{ 
				t \in {\Lambda}_{x} \setminus \{ x \}
			\, \middle| \,
				g_{\OI}\left( t \right)
				= 
				1
			\right\}\right|
			\text{ is even.}
		\end{cases}
	\end{equation}
	Thus $g_{\OI}\left( x \right)$ takes value only $0$ or $1$
	for each $x \in X$.
	Consequently, for any position $P \subset X$,
	the Grundy value 
	$
		g_{\OI}\left( P \right)
	$ 
	is also restricted to $0$ or $1$.
\end{lem}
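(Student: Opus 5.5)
We apply Theorem~\ref{thm:FT_of_T_game} to the order ideal game $\Pgame(X,\OI)$. For each $x\in X$ the set $\OI_{x}=\{T\in\OI\mid m_{T}=x\}$ consists of the single turning set $\Lambda_{x}$. Indeed, $\Lambda_{y}$ has maximum element $y$, so $m_{\Lambda_{y}}=x$ forces $y=x$. Hence the mex in \eqref{eq:FT_of_T_game_1} is taken over a one-element set:
\[
g_{\OI}(x)=\mex\Bigl\{\underset{t\in\Lambda_{x}\setminus\{x\}}{\nimsum}g_{\OI}(t)\Bigr\}.
\]
Since $\mex\{a\}$ equals $1$ if $a=0$ and $0$ if $a\neq 0$, we get $g_{\OI}(x)\in\{0,1\}$ for every $x$. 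This is obtained by strong induction along a linear extension of $X$, or directly from the displayed formula, because a mex of a singleton is always $0$ or $1$.

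Next we evaluate the NIM-sum. By induction, every $t<x$ has $g_{\OI}(t)\in\{0,1\}$, and NIM-addition on $\{0,1\}$ is addition modulo $2$ (Appendix~\ref{sc:appendix}, or directly from \eqref{eq:nimadd}: $0\nimadd 0=0$, $0\nimadd1=1$, $1\nimadd1=\mex\{1,1\}=0$). So the NIM-sum equals the parity of $\bigl|\{t\in\Lambda_{x}\setminus\{x\}\mid g_{\OI}(t)=1\}\bigr|$. It is $0$ exactly when this count is even, in which case $g_{\OI}(x)=1$; otherwise it is $1$ and $g_{\OI}(x)=0$. This is \eqref{eq:OIgame01}.

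For the final assertion, \eqref{eq:FT_of_T_game_2} gives $g_{\OI}(P)$ as a NIM-sum of values in $\{0,1\}$. Since $\{0,1\}$ is closed under $\nimadd$, this sum again lies in $\{0,1\}$. The only point needing care is justifying that $\OI_{x}$ is a singleton, and this follows from antisymmetry; the rest is routine.
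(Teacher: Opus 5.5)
Your proof is correct and follows the same route as the paper. The paper notes that each singleton position $\{x\}$ has exactly one move, to $\Lambda_{x}\setminus\{x\}$, and reads off \eqref{eq:OIgame01} directly from \eqref{eq:FT_of_T_game_1}; this is your observation that $\OI_{x}=\{\Lambda_{x}\}$, and you additionally spell out the details the paper leaves implicit: why $\OI_{x}$ is a singleton, why $\nimadd$ on $\{0,1\}$ is addition modulo $2$, and why $\{0,1\}$ is closed under $\nimadd$.
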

\begin{demo}{Proof}
For each position of the form $P=\{ x \}$,
there is exactly one available move, namely $\{x\}\to{\Lambda}_{x} \setminus \{ x \}$.
Therefore, \eqref{eq:FT_of_T_game_1} of 
Theorem~\ref{thm:FT_of_T_game} immediately yields the formula \eqref{eq:OIgame01}.
The second claim follows directly from this equation.
\end{demo}
%
\begin{cor}\label{cor:OrderI}
Let $X$ be a graded poset with a unique minimum element $\hat0$,
and let $\rkf:X\to\Non$ denote its rank function.
Then the Grundy function of the order ideal game $\Pgame\left( X, \OI \right)$ is given by
\begin{equation}
g_{\OI}(x)=\begin{cases}
1,&\text{ if $x=\hat0$,}\\
0,&\text{ otherwise,}
\end{cases}
\end{equation}
for all $x\in X$.
\end{cor}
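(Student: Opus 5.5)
We will prove the statement by strong induction along the order of $X$, using Lemma~\ref{lem:OrderI01}. The Grundy value of $x$ is determined by the parity of the number of elements $t<x$ with $g_{\OI}(t)=1$. So the claim reduces to a counting statement: the minimum $\hat0$ gets value $1$, and every other element gets value $0$.

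\textbf{Base case.} For $x=\hat0$ the set $\Lambda_{\hat0}\setminus\{\hat0\}$ is empty. It therefore contains zero (an even number of) elements of value $1$, so Lemma~\ref{lem:OrderI01} gives $g_{\OI}(\hat0)=1$. This is also the case $\ell(\{\hat0\})$ minimal, where the unique move $\{\hat0\}\to\emptyset$ leads to an ending position.

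\textbf{Inductive step.} Take $x\neq\hat0$ and assume the formula holds for every $t<x$. Since $X$ has a minimum, $\hat0\in\Lambda_x\setminus\{x\}$. By the induction hypothesis every other $t\in\Lambda_x\setminus\{x\}$ has $g_{\OI}(t)=0$. Hence the set $\{t\in\Lambda_x\setminus\{x\}\mid g_{\OI}(t)=1\}$ is exactly $\{\hat0\}$, which has odd cardinality $1$, and Lemma~\ref{lem:OrderI01} yields $g_{\OI}(x)=0$. The induction should be organized on the rank $\rkf(x)$, or simply along any linear extension, since only elements strictly below $x$ are used. This is where gradedness enters, merely as a convenient induction parameter. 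In fact only the existence of $\hat0$ is needed, so I would remark that the graded hypothesis is inessential.

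There is no real obstacle. The only point to check is that $\hat0$ genuinely lies in $\Lambda_x\setminus\{x\}$ for $x\neq\hat0$, which follows from $\hat0\le x$ together with $\hat0\neq x$. Beyond that, the argument is just making sure the induction is well-founded on a finite poset.
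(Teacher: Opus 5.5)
Your proof is correct and follows the paper's argument essentially verbatim: you use induction (the paper inducts on $\rkf(x)$), with $\Lambda_{\hat0}\setminus\{\hat0\}=\emptyset$ giving $g_{\OI}(\hat0)=1$. For $x\neq\hat0$, $\hat0$ is the unique element of value $1$ in $\Lambda_{x}\setminus\{x\}$, so Lemma~\ref{lem:OrderI01} gives $g_{\OI}(x)=0$; your remark that gradedness serves only as a convenient induction parameter, and that any linear extension of the finite poset would do, is also correct.
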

\begin{demo}{Proof}
We proceed by induction on the rank $\rkf(x)$.
If $\rkf(x)=0$, that is, $x=\hat0$, then $\Lambda_{x}\setminus\{x\}$ is empty, and hence
$g(x)=\mex\{ 0\}=1$.
\par
Suppose now that $\rkf(x)>0$. 
By the induction hypothesis, 
$\hat0$ is the only element in $\Lambda_{x}\setminus\{x\}$ whose Grundy value equals $1$.
Therefore, equation~\eqref{eq:OIgame01} in Lemma~\ref{lem:OrderI01} yields the desired conclusion.
This completes the proof.
\end{demo}
Hence, if the poset $X$ is graded and has $\hat0$, 
then the Grundy function has a particularly simple form, as described above.
The ASM poset $\ASM{n}$ is graded but has multiple minimal elements,
and therefore determining the Grundy function for the order ideal game on $\ASM{n}$
becomes a very interesting and nontrivial problem.
%

%
%
%
%
%
\section{Sprague-Grundy Analysis of Coin Turning Games}\label{sc:SG-Analysis}
The purpose of this section is to determine the Grundy functions of several coin turning games.
\par
Let $a, b\in\Non$.
The operation $a\oplus b$ is performed by writing \(a\) and \(b\) in binary and then adding the digits without carrying; e.g., 
$5\oplus9=(101)_{2}\oplus (1001)_{2}=(1100)_{2}=12$.
It is well-known that $(\Non,\oplus)$ forms an abelian group of exponent $2$.
In Theorem~\ref{thm:nimaddd=oplus}, we prove that the NIM-sum \(a\nimadd b\) defined in Definition~\ref{def:Nimadd} coincides with \(a\oplus b\).
\begin{dfn}\label{def:binary}
Let $i$ be a nonnegative integer.
For a positive integer $x$,
we denote by $\digit{i}{x}$ the $i$th digit in the binary expansion of $x$.
In other words we have $x=\sum_{i\geq0}\digit{i}{x}2^{i}$ where $\digit{i}{x}=0\text{ or }1$.
We further use the notation
$\placemax{x}=\max\{i\in\Non \,|\, \digit{i}{x}\neq 0\}$ and 
$\placemin{x}=\min\{i\in\Non \,|\, \digit{i}{x}\neq 0\}$.
For a positive integer \(x\), let $\fnh{x}=2^{\placemin{x}}$.
The sequence $\placemin{x}$ for $x\in\Pos$ is well known as the \textsl{ruler sequence}.
By a slight abuse of language, we also refer to the sequence $\fnh{x}$ as the ruler sequence.
\end{dfn}
For example,
if $x=26=(11010)_{2}$ then we write $\digit{4}{x}=1$, $\digit{0}{x}=0$, $\placemax{x}=4$,
 $\placemin{x}=1$ and hence $\fnh{x}=2$.
The first few values of $\placemin{x}$ and $\fnh{x}$ are listed in Table~\ref{tbl:phi}.
\begin{table}[h]
 \begin{center}
	\begin{tabular}{|l||c|c|c|c|c|c|c|c|c|c|c|c|c|c|c|c|} \hline
$x$            &   1 &  2 &  3 &  4 &  5 &  6 &  7 &  8 &  9 & 10 & 11 & 12 & 13 & 14 & 15 \\\hline
$\placemin{x}$ &   0 &  1 &  0 &  2 &  0 &  1 &  0 &  3 &  0 &  1 &  0 &  2 &  0 &  1 &  0 \\\hline
$\fnh{x}$      &   1 &  2 &  1 &  4 &  1 &  2 &  1 &  8 &  1 &  2 &  1 &  4 &  1 &  2 &  1 \\\hline
	\end{tabular}
   \caption{$\fnh{x}$\label{tbl:phi}}
 \end{center}
\end{table}
%
%
\subsection{Rulers on Finite Chains}
In this subsection we begin with the simplest case,
namely, $X=C=[n]$, the $n$-element chain.
The Grundy function for the order ideal game on \(C\) is given by Corollary~\ref{cor:OrderI}
since $C$ is graded and has a minimum element $\hat0$.
Hence, in this subsection we describe the Grundy function of the ruler on \(C\). 
Although this case is well known,
we include a proof for the sake of completeness.
The key ingredient is the following lemma,
which characterizes the ruler sequence via the minimum excluded value.
%
%
\begin{lem}\label{lem:chain-ruler-recurrence}
Let $f:\Pos\to\Non$ which satisfies $f(1)=1$, and
\begin{equation}\label{eq:chain-ruler-recurrence}
f(n)=\mex\Bigl\{\overset{n-1}{\underset{k=m}{\,\,\,\nimsum}}f(k) \,|\, m=1,2,\dots,n\Bigr\}
\end{equation}
for $n>1$.
Then we have $f(n)=\fnh{n}$.
\end{lem}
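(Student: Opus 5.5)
Since the recurrence determines $f$ uniquely, it suffices to show that $h(n):=\fnh{n}$ satisfies $h(1)=1$ and \eqref{eq:chain-ruler-recurrence}; then $f=h$ by induction on $n$. Assume $f(k)=\fnh{k}$ for all $k<n$. We then need to show that $\fnh{n}$ is the mex of the partial sums
\[
S_m:=\overset{n-1}{\underset{k=m}{\,\,\,\nimsum}}\fnh{k},\qquad m=1,\dots,n,
\]
where $S_n=0$ is the empty sum. Since NIM-addition coincides with $\oplus$ (Theorem~\ref{thm:nimaddd=oplus}), everything reduces to computing binary XORs. The key closed form we aim for is the prefix formula
\[
Q(N):=\fnh{1}\oplus\fnh{2}\oplus\cdots\oplus\fnh{N}=N\oplus\lfloor N/2\rfloor ,
\]
and then $S_m=Q(n-1)\oplus Q(m-1)$.

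To prove the prefix formula, we induct on $N$. The step reduces to the identity $(N\oplus\lfloor N/2\rfloor)\oplus((N+1)\oplus\lfloor (N+1)/2\rfloor)=\fnh{N+1}$. Write $N+1=2^d u$ with $u$ odd, so that the low part of $N+1$ is $10\cdots0$ with $d$ zeros. Then $N$ and $N+1$ differ exactly in the bits $0,\dots,d$, and $\lfloor N/2\rfloor$ and $\lfloor(N+1)/2\rfloor$ differ exactly in the bits $0,\dots,d-1$. XORing these two differences leaves only bit $d$, i.e.\ $2^d=\fnh{N+1}$. The map $N\mapsto N\oplus\lfloor N/2\rfloor$ is the binary reflected Gray code, which is a bijection on $\Non$ preserving the length of the binary expansion.

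We then show that every value below $\fnh{n}$ is attained. Write $n=2^d u$ with $u$ odd, so $\fnh{n}=2^d$. Take $m=n-j$ with $0\le j<2^d$. The numbers $n-1,\dots,n-j$ end in the bit patterns $2^d-1,\dots,2^d-j$ below bit $d$, and share the higher bits. Since $\fnh{k}$ depends only on the low bits of $k$ when $d_{\min}(k)<d$, the sum $S_{n-j}$ equals $\fnh{2^d-j}\oplus\cdots\oplus\fnh{2^d-1}$. This is $Q(2^d-1)\oplus Q(2^d-j-1)$, a XOR of Gray codes $G(2^d-1)\oplus G(2^d-1-j)$ with both arguments in $[0,2^d)$. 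As $j$ ranges over $[0,2^d)$, this XOR ranges over all of $[0,2^d)$, because $G$ permutes $[0,2^d)$ and XOR with the fixed value $G(2^d-1)=2^{d-1}$ is a bijection.

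Finally we show that $2^d$ itself is never attained. For any $m$, $S_m=G(n-1)\oplus G(m-1)$ with $m-1<n$. Since $G(n)\oplus G(n-1)=2^d$, we get $S_m=2^d$ iff $G(m-1)=G(n)$, iff $m-1=n$, which is impossible. Hence the mex is exactly $2^d=\fnh{n}$. The main obstacle is the bookkeeping in the Gray-code identities, in particular handling the boundary $m=1$ and the case $d=0$, where the attained set is just $\{0\}$. The cleanest route is to state the prefix formula as a separate lemma first.
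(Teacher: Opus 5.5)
Your proof is correct, and its skeleton matches the paper's Appendix~\ref{sc:RC}. Values below $2^{d}$ are obtained by reducing to the block ending at $2^{d}$ via the low-bits observation, as in part (iii) of Lemma~\ref{lem:chain-ruler}. The value $2^{d}$ is excluded by appending the term $\fnh{n}=2^{d}$.

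The difference lies in the key lemma. The paper never computes $\fnH{m}{n}$ in closed form. It proves directly (Lemma~\ref{lem:nim-nonzero}) that $\fnH{m}{n}\neq0$ for $m<n$, because an interval of integers contains a unique element of maximal $2$-adic valuation. From this single fact it derives distinctness, $\fnS{2^{k}}=\{0,\dots,2^{k}-1\}$ by counting, and $2^{k}\notin\fnS{n}$.

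You instead identify the prefix sums with the Gray code: $\fnH{m}{n}=G(n-1)\oplus G(m-1)$ with $G(N)=N\oplus\lfloor N/2\rfloor$. In this language the paper's nonvanishing lemma is precisely the injectivity of $G$, so your last two steps are the paper's arguments rephrased.

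Your route buys an explicit description $\fnS{n}=\{G(n-1)\oplus G(k)\mid 0\le k\le n-1\}$, which makes distinctness and the bound $<2^{d}$ transparent. The cost is that you rely on the bijectivity and length preservation of $G$, which you only assert. Include the short justification:
\begin{itemize}
\item Since $\lfloor N/2\rfloor$ has a strictly lower top bit than $N$, the top bit of $G(N)$ is that of $N$.
\item The bits of $N$ are recovered from the top down by $\digit{i}{N}=\digit{i}{G(N)}\oplus\digit{i+1}{N}$.
\end{itemize}
Hence $G$ is injective and maps $[0,2^{d})$ onto itself. The paper's approach, by contrast, is self-contained and needs no closed form.
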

We give a proof of this lemma in Appendix~\ref{sc:RC}.
This lemma will also be used in \S~\ref{sc:Divisor} and \S~\ref{sc:Subspcace}.
Hence we are in the position to state the main theorem of this section.
%
\begin{thm}[\cite{Sato}]\label{th:SGf-Chain-ruler}
Let $n$ be a positive integer,
and let $C=[n]$ be the $n$-element chain with the rank function $\rkf(x)=x-1$.
Then the Grundy function of the ruler on $C$ is given by
\begin{equation}
g(x)=\fnh{x}=\fnh{\rho(x)+1}
\end{equation}
for all $x\in [n]$.
\end{thm}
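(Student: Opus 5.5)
The plan is to reduce the statement directly to Lemma~\ref{lem:chain-ruler-recurrence} via the Fundamental Theorem (Theorem~\ref{thm:FT_of_T_game}).

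\textbf{Step 1: the turning sets with top $x$.} In the ruler on $C=[n]$ we have
\[
\I_{x}=\{[m,x]\mid 1\le m\le x\},
\]
since every interval $[m,x]$ has maximum $x$. For $T=[m,x]$ we get $T\setminus\{x\}=\{m,m+1,\dots,x-1\}$. For $m=x$ this set is empty and its NIM-sum is $0$.

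\textbf{Step 2: the recurrence.} Substituting into \eqref{eq:FT_of_T_game_1} gives
\[
g(x)=\mex\Bigl\{\overset{x-1}{\underset{k=m}{\,\,\,\nimsum}}g(k)\,\Bigm|\, m=1,\dots,x\Bigr\}.
\]
This is exactly the recurrence \eqref{eq:chain-ruler-recurrence}. In particular, for $x=1$ the only option value is the empty sum $0$, so $g(1)=\mex\{0\}=1$.

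\textbf{Step 3: the initial condition and conclusion.} Thus $g$, viewed as a function on $\{1,\dots,n\}$, satisfies the hypotheses of Lemma~\ref{lem:chain-ruler-recurrence} restricted to this range. The value $g(x)$ depends only on $g(1),\dots,g(x-1)$, so the recurrence uniquely determines $g$ on $[n]$ by induction. The function $\fnh{\cdot}$ satisfies the same recurrence by the lemma, hence $g(x)=\fnh{x}$ for all $x\in[n]$. Finally, $\rho(x)=x-1$ gives $\fnh{x}=\fnh{\rho(x)+1}$.

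\textbf{Main obstacle.} There is essentially none here. The substantive content is Lemma~\ref{lem:chain-ruler-recurrence}, which the paper proves in the appendix. The only point needing care is the restriction argument: the lemma is stated for functions on all of $\Pos$, while $g$ is defined only on $[n]$. This is handled by the uniqueness-by-induction remark in Step 3. Alternatively, one can note that $[n]$ is an initial segment of $[N]$ for every $N\ge n$, and the ruler values are unaffected by elements lying above $x$.
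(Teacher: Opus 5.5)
Your proposal is correct and follows essentially the same route as the paper: it uses Theorem~\ref{thm:FT_of_T_game} with $\I_{x}=\{[m,x]\mid 1\le m\le x\}$ to obtain the recurrence \eqref{eq:chain-ruler-recurrence}, checks $g(1)=\mex\{0\}=1$, and then invokes Lemma~\ref{lem:chain-ruler-recurrence}. Your remark about passing from functions on $\Pos$ to functions on $[n]$ (uniqueness by induction, or viewing $[n]$ as an initial segment of larger chains) is harmless extra care that the paper leaves implicit.
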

%
%
\begin{demo}{Proof}
The proof of this theorem reduces to Theorem~\ref{thm:FT_of_T_game} together with the above lemma.
Let $g$ denote the Grundy function of the ruler $\Pgame(C,\I)$.
\par
If $x=1\in C=[n]$ is the minimum element of $C$,
there is no element less than $1$.
Since empty NIM-sum is $0$,
we obtain $g(1)=\mex\{0\}=1$ by \eqref{eq:FT_of_T_game_1}.
\par
Now let $x\in C$ with $x>1$. 
Then $\T_{x}$ consists of the intervals of the form $[m,x]$ for $m=1,2,\dots,x$.
By \eqref{eq:FT_of_T_game_1},
this immediately implies
\begin{equation}\label{eq:rec-chain}
g(x)=\mex\Bigl\{\overset{x-1}{\underset{k=m}{\,\,\,\nimsum}}g(k) \,|\, m=1,2,\dots,x\Bigr\}.
\end{equation}
Hence, $g(x)$ satisfies the recurrence relation of Lemma~\ref{lem:chain-ruler-recurrence}.
Accordingly, we conclude that $g(n)=\fnh{n}$ for all $n\in\Pos$.
%
%
\end{demo}
%
%
%
\subsection{Rulers on Divisor Posets}\label{sc:Divisor}
Let $r$ be a nonnegative integer,
and let $n=\prod_{i=1}^{r}p_{i}^{e_{i}}$ be a positive integer, where each $p_{i}$ is a prime number and $e_{i}\in\Pos$ for all $i$.
The lattice $D_n$ has the minimul element $\hat0=1$ and the maximal element $\hat1=n$.
Hence, as before,
the Grundy functions of the order ideal game $\Pgame\left(D_{n},\OI\right)$ on $D_{n}$ 
are given by Corollay~\ref{cor:OrderI}.
Furthermore, this poset has the structure $D_n\simeq [e_{1}+1]\times\cdots\times[e_{r}+1]$,
that is, it can be written as the direct product of $r$ chains.
Hence we can apply Theorem~\ref{th:nim-prod-app} to obtain the following result:
%
\begin{thm}\label{th:Grundy-Dn-ruler}
Let $e$, $n$, $p_i$ and $e_i$ ($i=1,\dots,r$) be as above.
The Grundy values of the ruler on the lattice $D_n$ is given by
\begin{equation}
g(y)=g(p_{1}^{x_{1}}\cdots p_{r}^{x_{r}})
=\fnh{x_{1}+1}\nimmul\cdots\nimmul \fnh{x_{r}+1}
\end{equation}
for $y=p_{1}^{x_{1}}\cdots p_{r}^{x_{r}}\in D_{n}$.
\end{thm}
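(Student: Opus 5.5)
The plan is to apply Theorem~\ref{th:nim-prod-app} repeatedly to the order isomorphism $D_n\cong[e_1+1]\times\cdots\times[e_r+1]$, sending $p_1^{x_1}\cdots p_r^{x_r}\mapsto(x_1+1,\dots,x_r+1)$. Combined with Proposition~\ref{prop:g_value_order_isomorphism_general}, this reduces the problem to the ruler on a product of chains, whose one-dimensional factors are handled by Theorem~\ref{th:SGf-Chain-ruler}: on the chain $[e_i+1]$ we have $g(x_i+1)=\fnh{x_i+1}$.

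The key step is to check that the turning set on the product is exactly the product family demanded by Theorem~\ref{th:nim-prod-app}. Concretely, we must show that the family $\I$ of all intervals of $X_1\times X_2$ equals $\{T_1\times T_2\mid T_1\in\I_1,\ T_2\in\I_2\}$. For $(a_1,a_2)\le(b_1,b_2)$ in the product order,
\[
[(a_1,a_2),(b_1,b_2)]=\{(t_1,t_2)\mid a_1\le t_1\le b_1,\ a_2\le t_2\le b_2\}=[a_1,b_1]\times[a_2,b_2],
\]
since comparisons are coordinatewise. Conversely, every product of intervals $[a_1,b_1]\times[a_2,b_2]$ arises this way. Hence the ruler on a product is the product of the rulers, and induction on $r$ gives a ruler on $[e_1+1]\times\cdots\times[e_r+1]$ whose Grundy value at $(x_1+1,\dots,x_r+1)$ is $g_1(x_1+1)\nimmul\cdots\nimmul g_r(x_r+1)$. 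Here one uses associativity of $\nimmul$ from the Appendix, so that the iterated product is unambiguous. The induction itself rests on the observation that $([e_1+1]\times\cdots\times[e_{r-1}+1])\times[e_r+1]$ is again a direct product poset whose intervals are products of intervals.

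Next, Proposition~\ref{prop:g_value_order_isomorphism_general} transports the values to $D_n$, because the isomorphism sends intervals to intervals bijectively; that is, order isomorphisms preserve $\I$. Substituting $g_i(x_i+1)=\fnh{x_i+1}$ from Theorem~\ref{th:SGf-Chain-ruler} then yields the stated formula. The degenerate case $r=0$, where $n=1$ and $D_1$ is a single point with $g=1$ equal to the empty nim-product, should be noted separately.

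The only real obstacle is the interval-product identity together with the convention for the empty product. The remaining steps are direct invocations of earlier results.
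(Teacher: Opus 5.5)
Your proposal is correct and follows the same route as the paper, which simply observes $D_n\cong[e_1+1]\times\cdots\times[e_r+1]$ and invokes Theorem~\ref{th:nim-prod-app} together with Theorem~\ref{th:SGf-Chain-ruler}. You also make explicit the details the paper leaves implicit: intervals of a product poset are exactly products of intervals, so the product theorem's hypothesis on $\T$ holds; values transfer along the isomorphism via Proposition~\ref{prop:g_value_order_isomorphism_general}; the induction on $r$ uses associativity of $\nimmul$; and the degenerate case $r=0$ is covered.
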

When $n=12=2^2\cdot3$, the Grundy values of 
the order ideal game and the ruler on $D_{12}$ are shown in
 Figure~\ref{fig:D12-OI} and Figure~\ref{fig:D12-ru}, respectively.

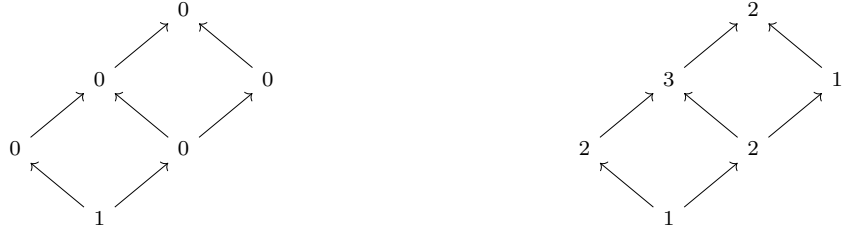
\begin{figure}[htbp]
{\scriptsize
\begin{subfigure}[b]{0.4\textwidth}
    \centering
\begin{tikzcd}
              &                       & 0 \\
              & 0 \arrow{ru}          &                       & 0 \arrow{lu}  \\
 0 \arrow{ru} &                       & 0  \arrow{lu}\arrow{ru}   \\
              & 1 \arrow{lu}\arrow{ru}\\
\end{tikzcd}
    \caption{Order ideal game on $D_{12}$}
    \label{fig:D12-OI}
\end{subfigure}
}
\hfill
{\scriptsize
\begin{subfigure}[b]{0.4\textwidth}
    \centering
\begin{tikzcd}
              &                       & 2 \\
              & 3 \arrow{ru}          &                       & 1 \arrow{lu}  \\
 2 \arrow{ru} &                       & 2  \arrow{lu}\arrow{ru}   \\
              & 1 \arrow{lu}\arrow{ru}\\
\end{tikzcd}
    \caption{Ruler on $D_{12}$}
    \label{fig:D12-ru}
\end{subfigure}
}
    \caption{Poset $D_{12}$ of divisors of $12$}
\end{figure}
%
%
%
\subsection{Ruler on Subspace Lattice over a Finite Field}\label{sc:Subspcace}
Let $n$ be a nonnegative integer and let $q$ be a prime power.
In this subsection, we consider the case $X=B_n(q)$ and the order is defined by inclusion.
Then $X$ is a finite graded poset with rank function $\rho(W)=\dim W$ for $W\in X$,
and with unique minimum element $\hat0=\zero=\{0\}$.
Consequently, Corollary~\ref{cor:OrderI}
apply, yielding the Grundy functions for the order ideal game on $X$.
\par
The aim of this subsection is to establish the following theorem describing 
the Grundy function of the ruler on $X$.
%
\par
Let $\qbinom{n}{r}{q}:=\frac{(q;q)_{n}}{(q;q)_{r}(q;q)_{n-r}}$ denote the $q$-binomial coefficient,
where $(a;q)_{k}=\begin{cases}\prod_{i=1}^{k}(1-aq^{i-1}),&\text{ if $k\geq 0$,}\\ 1/(a;q)_{-k},&\text{ if $k<0$,}\end{cases}$ is the $q$-Pochhammer symbol.
It is also well konwn that the $q$-binomial coefficients satisfy the recurrence relations
\begin{equation}\label{eq:q-binom-rec}
\qbinom{n}{r}{q}=\qbinom{n-1}{r-1}{q}+q^{r}\qbinom{n-1}{r}{q}=q^{n-r}\qbinom{n-1}{r-1}{q}+\qbinom{n-1}{r}{q}
\end{equation}
for $n\geq 1$, together with the initial condition $\qbinom{0}{r}{q}=\delta_{0,r}$,
where $\delta_{i,j}$ denotes the Kronecker delta.
%
\begin{thm}\label{th:Grundy-subsFF-ruler}
\begin{subequations}
Let \(n\) be a nonnegative integer, and let \(q\) be a prime power.
Let $W\in B_n(q)$ be a subspace of $\F_{q}^{n}$.
The Grundy value of $W$ in $B_n(q)$ depends only on the dimension $\dim W$;
hence it is equal to the Grundy value of $\F_{q}^d$ in $B_d(q)$,
which is denoted by $g_{q}(d)$.
\begin{enumerate}[label=(\roman*)] 
	\item\label{item:q-even}
	If $q$ is even, then
	\begin{equation}
	g_{q}(d)=\fnh{d+1}.
	\end{equation}
	\item\label{item:q-odd}
	If $q$ is odd, then 
	\begin{equation}
	g_{q}(d)=\MOD(d,3)+1.
	\end{equation}
\end{enumerate} 
Here, $\MOD(x,m)$ denotes the remainder when \(x\) is divided by \(m\).
\end{subequations}
\end{thm}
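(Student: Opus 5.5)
We argue by strong induction on $d=\dim W$, using the recurrence \eqref{eq:FT_of_T_game_1} of Theorem~\ref{thm:FT_of_T_game} for the ruler $\I$. For $W\in B_n(q)$, the turning sets with maximum $W$ are the intervals $[U,W]$ with $U\subseteq W$. The interval $[U,W]$ is isomorphic to $B_{d-k}(q)$, where $k=\dim U$. The whole lower interval $[\zero,W]$ is isomorphic to $B_d(q)$, and its elements are exactly those lying below $W$. Hence, by induction on $d$, every $V\subsetneq W$ has Grundy value $g_q(\dim V)$, which gives the claimed dimension-dependence. It follows that
\[
g_q(d)=\mex\Bigl\{\,\nimsum_{j=k}^{d-1} \bigl(\qbinom{d-k}{j-k}{q}\bmod 2\bigr)\, g_q(j)\ \Big|\ 0\le k\le d\Bigr\}.
\]
Here $\qbinom{d-k}{j-k}{q}$ is the number of $j$-dimensional $V$ with $U\subseteq V\subseteq W$. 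Only the parity of this count matters, since $a\nimadd a=0$. The multiplication by $0$ or $1$ means "include the term or not". The base case is $g_q(0)=\mex\{0\}=1$.

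\textbf{Parity of the $q$-binomials.} For $q$ even, $q\equiv0$, so the recurrence \eqref{eq:q-binom-rec} gives $\qbinom{m}{r}{q}\equiv\qbinom{m-1}{r-1}{q}\pmod 2$ for $r\geq1$. Hence $\qbinom{m}{r}{q}\equiv 1$ for all $0\le r\le m$. For $q$ odd, $q\equiv1$, and the $q$-binomials reduce mod $2$ to ordinary binomials $\binom{m}{r}\bmod 2$. The even case is then immediate: the set becomes $\{\nimsum_{j=k}^{d-1}g_q(j)\mid k\}$, which is exactly the recurrence \eqref{eq:chain-ruler-recurrence} after the shift $f(n)=g_q(n-1)$. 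Lemma~\ref{lem:chain-ruler-recurrence} then yields $g_q(d)=\fnh{d+1}$.

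\textbf{Odd case.} Here I must show that the values
\[
S_k=\nimsum_{j=k}^{d-1}\Bigl(\tbinom{d-k}{j-k}\bmod 2\Bigr)\,(\MOD(j,3)+1), \qquad 0\le k\le d,
\]
cover $\{0,1,\dots,\MOD(d,3)\}$ and avoid $\MOD(d,3)+1$. Since the values $g_q(j)$ lie in $\{1,2,3\}$, and $1\oplus2=3$, all the sums $S_k$ lie in $\{0,1,2,3\}\cong\F_2^2$. The term $k=d$ gives $S_d=0$, and $k=d-1$ gives $S_{d-1}=g_q(d-1)$.

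The \textbf{main obstacle} is the avoidance claim: no $S_k$ may equal $\MOD(d,3)+1$. I would attack it by encoding $1,2,3$ as the nonzero elements of $\F_4$, namely $1\mapsto 1$, $2\mapsto\omega$, $3\mapsto\omega^2$; this is compatible with $\oplus$ since $1+\omega=\omega^2$. Then $g_q(j)=\omega^{j}$, and with $m=d-k$ the full sum over $j=k,\dots,d$ becomes
\[
\sum_{i=0}^{m}\tbinom{m}{i}\omega^{k+i}=\omega^k(1+\omega)^m=\omega^{k+2m}.
\]
Removing the $j=d$ term gives $S_k=\omega^{k+2m}+\omega^{d}$. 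Reducing exponents mod $3$, we have $k+2m\equiv d+m$, so $S_k=\omega^d(\omega^m+1)$. As $m$ ranges over $0,1,2$ this equals $0$, $\omega^{d+2}$, $\omega^{d+1}$. So the values attained are exactly $\{0,\omega^{d+1},\omega^{d+2}\}$, and $\omega^d$ is missed.

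Translating back, the numbers attained are $0$ together with $\MOD(d+1,3)+1$ and $\MOD(d+2,3)+1$. The excluded value is $\MOD(d,3)+1$. This is precisely the claimed mex, provided the two attained nonzero values are the smaller ones, which is checked in each of the three residue classes of $d$. The sanity condition $d\geq2$ ensures that $m=1,2$ both occur; the small cases $d=0,1$ are checked directly.
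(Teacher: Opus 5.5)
Your proof is correct. The reduction to the mex recurrence matches the paper's Lemma~\ref{lem:Bn(q)-recurrence}: dimension-dependence via $[U,W]\cong B_{d-k}(q)$, and the count $\qbinom{d-k}{j-k}{q}$ reduced mod $2$. The even case via Lemma~\ref{lem:chain-ruler-recurrence} with the shift $f(n)=g_q(n-1)$ is also the paper's argument.

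For odd $q$ you take a genuinely different route. The paper uses Pascal's rule $G(n,r)=G(n-1,r-1)\nimadd G(n-1,r)$ to derive the three-term recurrence $s_q(d,m)=s_q(d,m+1)\nimadd s_q(d-1,m)\nimadd g_q(d-1)$ (Lemma~\ref{lem:red:s(d,m)}). It then proves, by a nested induction on $d$ and on $d-m$, the closed form: $s_q(d,m)=0$ if $d\equiv m\pmod 3$, and $s_q(d,m)=H(m)=\MOD(m,3)+1$ otherwise. You instead identify $(\{0,1,2,3\},\oplus)$ with $(\F_4,+)$, write $g_q(j)=\omega^j$, and evaluate every sum at once with the binomial theorem and $1+\omega=\omega^2$. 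Your formula $S_k=\omega^d(\omega^{d-k}+1)$ equals $\omega^k$ whenever $d-k\not\equiv 0\pmod 3$. So it is exactly the paper's closed form, obtained without the auxiliary recurrence or the inner induction.

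Your version also explains where the period $3$ comes from: it is the multiplicative order of $1+\omega$. Incidentally, your encoding is precisely nim-multiplication on $\{0,1,2,3\}$, since $2\nimmul 2=3$. Thus $\MOD(j,3)+1$ is the $j$-th nim-power of $2$, which ties in naturally with the paper's appendix. The paper's argument, by contrast, stays entirely within nim-addition and Pascal-type manipulations, with no field structure needed.

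One small wording slip in your last step. The conclusion needs no proviso that the attained nonzero values are ``the smaller ones''; for $d\equiv 0\pmod 3$ they are $2$ and $3$. Since the attained set is $\{0,1,2,3\}\setminus\{\MOD(d,3)+1\}$ (for $d\geq 2$), its mex is $\MOD(d,3)+1$ automatically.
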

The first few values of $g_{q}(d)$ are listed in Table~\ref{tb:gq(d)}.
The Grundy values corresponding to each element of $B_3(2)$ in Figure~\ref{fig:B32}
are listed in Figure~\ref{fig:B32g-ruler}.
Hereafter we use the notation $G_{q}(n,r):=\MOD\Bigl(\qbinom{n}{r}{q},2\Bigr)$.
For nonnegative integers $m$ and $a$,
the product $ma$ denotes $\underbrace{a\nimadd\cdots\nimadd a}_{m\text{ times}}$.
The following lemma gives recurrence relations for $g_{q}(d)$.
\begin{table}[htb]
 \begin{center}
   \caption{The Grundy values $g_{q}(d)$}\label{tb:gq(d)}
	\begin{tabular}{|l||c|c|c|c|c|c|c|c|c|c|c|c|c|c|c|c|} \hline
 $d=\dim W$ &   0 &  1 &  2 &  3 &  4 &  5 &  6 &  7 &  8 &  9 & 10 & 11 & 12 & 13 & 14 \\\hline\hline
 $g_{q}(d)$ ($q$ even) &   1 &  2 &  1 &  4 &  1 &  2 &  1 &  8 &  1 &  2 &  1 &  4 &  1 &  2 &  1 \\\hline
 $g_{q}(d)$ ($q$ odd)  &   1 &  2 &  3 &  1 &  2 &  3 &  1 &  2 &  3 &  1 &  2 &  3 &  1 &  2 &  3 \\\hline
	\end{tabular}
 \end{center}
\end{table}
%
%
\begin{lem}\label{lem:Bn(q)-recurrence}
\begin{subequations}
Let \(d\) be a nonnegative integer, and let \(q\) be a prime power.
\begin{enumerate}[label=(\arabic*),leftmargin=2em]
\item\label{it:Bn(q)lem1}
Let $W\in B_n(q)$.
The Grundy value of $W$ in the ruler on $B_n(q)$ depends only on its dimension $d=\dim W$.
Accordingly, we denote the Grundy value of $W$ by $g_{q}(d)$.
\item\label{it:Bn(q)lem2}
For $m=0,1,\dots,d$, define
\begin{equation}\label{eq:rec:sq(d,m)-general}
s_{q}(d,m):=\overset{d-1}{\underset{k=m}{\,\,\,\nimsum}}G_{q}(d-m,k-m)\,g_{q}(k).
\end{equation}
Then
\begin{equation}\label{eq:rec-mex}
g_{q}(d)=\mex\left\{\,s_{q}(d,m) \,\mid\, m=0,1,\dots,d\,\right\}.
\end{equation}
\end{enumerate}
\end{subequations}
\end{lem}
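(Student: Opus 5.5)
Part (1) follows from homogeneity of $B_n(q)$ under $GL_n(\F_q)$. Any two subspaces $W,W'$ of the same dimension $d$ are related by some $\phi\in GL_n(\F_q)$ with $\phi(W)=W'$. The induced map on $B_n(q)$ is an order automorphism, and it sends intervals to intervals: $\phi([U,V])=[\phi(U),\phi(V)]$. Hence it preserves the turning set $\I$, and Proposition~\ref{prop:g_value_order_isomorphism_general} gives $g(W)=g(W')$.

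We also need independence from $n$. The interval $\Lambda_W=[\zero,W]$ is isomorphic to $B_d(q)$. By \eqref{eq:FT_of_T_game_1}, $g(W)$ only involves turning sets $[U,W]\subseteq\Lambda_W$, so induction on $\dim W$ shows that $g(W)$ equals the Grundy value of $\F_q^d$ in $B_d(q)$. This justifies the notation $g_q(d)$.

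For part (2), apply \eqref{eq:FT_of_T_game_1} to $x=W$ with $\dim W=d$. We have $\T_W=\{[U,W]\mid U\le W\}$, so
\[
g_q(d)=\mex\Bigl\{\,\nimsum_{V\in[U,W]\setminus\{W\}}g(V)\;\Bigm|\;U\le W\Bigr\}.
\]
Fix $U$ with $\dim U=m$. By part (1), each summand depends only on $k=\dim V$. The number of $V$ with $U\le V\le W$ and $\dim V=k$ equals the number of $(k-m)$-dimensional subspaces of $W/U\cong\F_q^{d-m}$, namely $\qbinom{d-m}{k-m}{q}$.

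Since NIM-addition has exponent $2$ (Appendix~\ref{sc:appendix}), adding $g_q(k)$ to itself $N$ times gives $\MOD(N,2)\,g_q(k)$. So the block for dimension $k$ contributes $G_q(d-m,k-m)\,g_q(k)$, and the NIM-sum over $k=m,\dots,d-1$ is exactly $s_q(d,m)$.

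The value $s_q(d,m)$ depends only on $m$, not on the particular $U$. As $U$ ranges over subspaces of $W$, $m$ takes every value $0,\dots,d$; in particular $m=d$ gives the empty sum $0$. Therefore the set inside the $\mex$ is exactly $\{s_q(d,m)\mid 0\le m\le d\}$, which is \eqref{eq:rec-mex}.

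The only point needing care is the justification in part (1) that $g(W)$ does not depend on the ambient $n$, which the restriction to $\Lambda_W$ handles. The rest is counting.
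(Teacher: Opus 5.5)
Your proof is correct. Part (2) is exactly the paper's argument: apply \eqref{eq:FT_of_T_game_1} with $\T_W=\{[U,W]\mid U\subseteq W\}$, count the $k$-dimensional members of $[U,W]$ through $W/U$ as $\qbinom{d-m}{k-m}{q}$, and reduce multiplicities modulo $2$. You differ from the paper in part (1). You get dimension-dependence from symmetry. $GL_n(\F_q)$ acts transitively on $d$-dimensional subspaces by order automorphisms that preserve $\I$, so Proposition~\ref{prop:g_value_order_isomorphism_general} applies; this is the same device the paper uses for $\ASM{n}$ and for $\Pi_n$. You then add a separate restriction to $\Lambda_W\cong B_d(q)$ to get independence from $n$. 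The paper instead proves (1) and (2) together by induction on $d$. If the values below dimension $d$ depend only on dimension, then the right-hand side of \eqref{eq:rec-mex} is a function of $d$ alone and never mentions $n$, so $g(W)$ depends only on $d$, and no group action is needed. The paper's route is more economical, since the recurrence gives dimension-dependence and $n$-independence at once. In fact, once you have \eqref{eq:rec-mex} (which also covers $d=0$), your restriction argument for $n$-independence is redundant. Your symmetry argument, for its part, settles part (1) inside a fixed $B_n(q)$ without any induction, and it shows the structural reason behind it: the subspace lattice is homogeneous.
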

\begin{demo}{Proof}
Let $X=B_n(q)$ and let $\I$ denote the set of all intervals
$[W_{1},W_{2}]$ such that $\zero \subseteq W_{1} \subseteq W_{2} \subseteq X$.
Let $W\subseteq \F_{q}^{n}$ be a subspace with $\dim W=d$.
We prove \ref{it:Bn(q)lem1} and \ref{it:Bn(q)lem2} by induction on $d$.
\begin{enumerate}[label=(\roman*),leftmargin=2em]
\item\label{it:d=0-Bn(d)}
If $d=0$ then $W=\zero$ is the only subspace of itself.
Hence, by \eqref{eq:FT_of_T_game_1}, 
the NIM-sum ${\nimsum}_{T\subsetneq W}g_{B_{n}(q),\I}(T)$ is empty and therefore equals $0$.
Consequently, $g_{B_{n}(q),\I}(W)=\mex(\{0\})=1$,
This verifies \eqref{eq:FT_of_T_game_1} in the case $d=0$, 
and we conclude that $g_{q}(d)=1$.
\item\label{it:d>0-Bn(d)}
Assume now that $d>0$, and \ref{it:Bn(q)lem1} and \ref{it:Bn(q)lem2} hold for all dimensions less than $d$.
Let $W \subseteq \F_{q}^{n}$ be a subspace with $\dim W=d$.
By Theorem~\ref{thm:FT_of_T_game} \eqref{eq:FT_of_T_game_1},
we have 
\begin{equation}
g_{B_{n}(q),\I}(W)=\mex\Bigl\{
\underset{U\subseteq T\subsetneq W}{\nimsum}g_{B_{n}(q),\I}(T)
\,\Big|\,
U\subseteq W\Bigr\}.
\label{eq:middle-subspaces}
\end{equation}
By the induction hypothesis,
$g_{B_{n}(q),\I}(T)$ depends only on $\dim T=k$ since $k<d$;
hence we may write $g_{B_{n}(q),\I}(T)=g_{q}(k)$.
If we set $\dim U=m$,
then $m$ ranges over $0,1,\dots,d$.
The interval $[U,W]$ is isomorphic, as a vector space poset, to the poset of subspaces of the quotient space $W/U$.
Therefore, the number of subspaces $T\in[U,W]$ with $\dim T=k$ is equal to the number of subspaces $\overline T$ of $W/U$ with $\dim \overline T=k-m$,
which is $\qbinom{d-m}{k-m}{q}$ since $\dim W/U=d-m$
(see \cite{ec1}).
Substituting this into \eqref{eq:middle-subspaces}, we obtain
\[
g_{B_{n}(q),\I}(W)=\mex\Bigl\{ \overset{d-1}{\underset{k=m}{\nimsum}}\qbinom{d-m}{k-m}{q}g_{q}(k)\mid m=0,1,\dots,d\Bigr\}.
\]
Since $(\Non,\nimadd)$ is an abelian group of exponent $2$, we may reduce the coefficients modulo $2$.
This shows that $g_{B_{n}(q),\I}(W)$ depends only on $d$, and that \eqref{eq:rec-mex} holds.
Hence \ref{it:Bn(q)lem1} and \ref{it:Bn(q)lem2} are valid for $\dim W=d$.
\end{enumerate}
Together, \ref{it:d=0-Bn(d)} and \ref{it:d>0-Bn(d)} complete the proof by induction.
\end{demo}
\begin{figure}[htbp]
\begin{tikzcd}
    & & & {4} & & & \\
    {1} \arrow{rrru} & {1} \arrow{rru} & {1} \arrow{ru} & {1} \arrow{u} & {1} \arrow{lu} & {1} \arrow{llu} & {1} \arrow{lllu} \\
    {2} \arrow{u}\arrow{ru}\arrow{rrru} & {2} \arrow{lu}\arrow{ru}\arrow{rrru} & {2} \arrow{lu}\arrow{ru}\arrow{rrru} & {2} \arrow{lllu}\arrow{rru}\arrow{rrru} & {2} \arrow{lllu}\arrow{u}\arrow{rru} & {2} \arrow{lllu}\arrow{llu}\arrow{ru} & {2} \arrow{lllu}\arrow{llu}\arrow{lu} \\
    & & & {1} \arrow{lllu}\arrow{llu}\arrow{lu}\arrow{u}\arrow{ru}\arrow{rru}\arrow{rrru} \\
\end{tikzcd}
\caption{The Grundy values of $B_3(2)$}
\label{fig:B32g-ruler}
\end{figure}
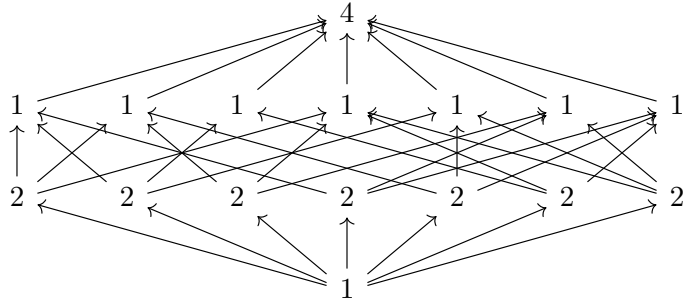
%
%
%
If $q$ is even, then
the recurrence relation \eqref{eq:q-binom-rec} implies $\qbinom{n}{r}{q}\equiv\qbinom{n-1}{r}{q}$ ($\MOD\,2$)
for $r<n$. Hence, we obtain $G_{q}(n,r)=1$ for $0\leq r\leq n$.
\par
If $q$ is odd, 
then from \eqref{eq:q-binom-rec} 
we obtain $G_{q}(n,r)=G_{q}(n-1,r-1) \nimadd G_{q}(n-1,r)$
for $n\geq1$,
since $q^r\equiv 1$ ($\MOD\,2$).
Hence $G_{q}(n,r)$ is independent of $q$.
If we write $G(n,r)$ for $G_{q}(n,r)$,
then
\begin{equation}\label{eq:rec:G(n,r)}
	G(n,r)=G(n-1,r-1) \nimadd G(n-1,r).
\end{equation}
Accordingly,
the NIM-sum in \eqref{eq:rec:sq(d,m)-general} can be written as
\begin{equation}\label{eq:def:s(d,m)}
s_{q}(d,m):=\overset{d-1}{\underset{k=m}{\,\,\,\nimsum}} G(d-m,k-m) g_{q}(k)
\end{equation}
for $0\leq m\leq d$ when $q$ is odd.
Hence \eqref{eq:rec:G(n,r)} together with \eqref{eq:def:s(d,m)} yields the following lemma, 
which serves as a key lemma in the proof of Theorem~\ref{th:Grundy-subsFF-ruler} for odd $q$.
%
%
\begin{lem}\label{lem:red:s(d,m)}
If $d\geq 1$ and $0\leq m\leq d-1$,
then $s_{q}(d,m)$ satisfies 
\begin{equation}\label{eq:rec:s(d,m)}
s_{q}(d,m)=s_{q}(d,m+1)\nimadd s_{q}(d-1,m)\nimadd g_{q}(d-1).
\end{equation}
\end{lem}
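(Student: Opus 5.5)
The identity \eqref{eq:rec:s(d,m)} is purely algebraic. I would prove it by expanding both sides with \eqref{eq:def:s(d,m)} and applying the Pascal-type recurrence \eqref{eq:rec:G(n,r)} term by term. Throughout I work in the group $(\Non,\nimadd)$, which has exponent $2$, so integer coefficients may be read modulo $2$. I also use the convention $G(n,r)=0$ for $r<0$ or $r>n$, together with $G(n,n)=G(n,0)=1$; these hold because $\qbinom{n}{n}{q}=\qbinom{n}{0}{q}=1$.

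First, I would write
\[
s_{q}(d,m)=\overset{d-1}{\underset{k=m}{\nimsum}}G(d-m,k-m)\,g_{q}(k).
\]
Since $d-m\geq1$, I may apply \eqref{eq:rec:G(n,r)} with $n=d-m$ and $r=k-m$:
\[
G(d-m,k-m)=G(d-m-1,k-m-1)\nimadd G(d-m-1,k-m).
\]
The coefficient of $g_q(k)$ is an element of $\{0,1\}$, so the expression $c\,g_q(k)$ is additive in $c$ over $\nimadd$, using the exponent-$2$ property. The sum therefore splits into two pieces.

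Second, I would identify each piece.

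\textbf{The first piece.} It equals $\nimsum_{k=m}^{d-1}G(d-(m+1),k-(m+1))\,g_q(k)$. The $k=m$ term vanishes because $G(\cdot,-1)=0$, so this piece is exactly $s_q(d,m+1)$. When $m=d-1$ it is the empty sum, which is $0=s_q(d,d)$, so the boundary case is consistent.

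\textbf{The second piece.} It equals $\nimsum_{k=m}^{d-1}G((d-1)-m,k-m)\,g_q(k)$. The terms with $k\le d-2$ give precisely $s_q(d-1,m)$. The remaining term $k=d-1$ has coefficient $G(d-1-m,d-1-m)=1$, so it contributes $g_q(d-1)$.

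Adding the two pieces yields \eqref{eq:rec:s(d,m)}.

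I expect no serious obstacle. The only points needing care are the boundary terms: the vanishing of $G(n,-1)$, the top coefficient $G(n,n)=1$, and the edge case $m=d-1$, where $s_q(d-1,d-1)$ and $s_q(d,d)$ are empty sums equal to $0$. One must also justify that $G$ takes values in $\{0,1\}$, so that multiplying by $G$ distributes over $\nimadd$; this is immediate because $G_q(n,r)$ is defined as a residue modulo $2$.
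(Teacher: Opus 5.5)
Your proposal is correct and is essentially the paper's own proof: split each coefficient with the Pascal-type recurrence \eqref{eq:rec:G(n,r)} (valid here because this lemma sits in the odd-$q$ setting), note that the $k=m$ term of the first sum vanishes so that this sum equals $s_{q}(d,m+1)$, and peel off the $k=d-1$ term of the second sum, whose coefficient is $1$, to get $s_{q}(d-1,m)\nimadd g_{q}(d-1)$. Your write-up is, if anything, slightly more careful than the paper's, which writes $g_{q}(m)$ in place of $g_{q}(k)$ in its intermediate sums and does not treat the edge case $m=d-1$ separately.
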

\begin{demo}{Proof}
Since $d-m\geq1$,
we may apply the recurrence \eqref{eq:rec:G(n,r)} to obtain
\begin{align*}
s_{q}(d,m)
&=\overset{d-1}{\underset{k=m}{\,\,\,\nimsum}} G(d-m-1,k-m-1) g_{q}(m)
\\&\qquad\qquad
\nimadd\overset{d-1}{\underset{k=m}{\,\,\,\nimsum}} G(d-m-1,k-m) g_{q}(m).
\end{align*}
Note that $G(d-m-1,k-m-1)=0$ in the first sum when $k=m$,
and that $G(d-m-1,k-m)=1$ in the second sum when $k=d-1$.
Therefore, 
\begin{align*}
s_{q}(d,m)
&=\overset{d-1}{\underset{k=m+1}{\,\,\,\nimsum}} G(d-m-1,k-m-1) g_{q}(m)
\\&\qquad\qquad
\nimadd\overset{d-2}{\underset{k=m}{\,\,\,\nimsum}} G(d-m-1,k-m) g_{q}(m)
\nimadd g(d-1)
\\&=s_{q}(d,m+1)\nimadd s_{q}(d-1,m)\nimadd g(d-1),
\end{align*}
which proves \eqref{eq:rec:s(d,m)}.
\end{demo}
\begin{demo}{Proof of Theorem~\ref{th:Grundy-subsFF-ruler}}
\begin{enumerate}[leftmargin=14pt,label=(\roman*)] 
\item\label{item:proof-even}
When $q$ is even,
Lemma~\ref{lem:Bn(q)-recurrence} 
and the identity $G_{q}(n,r)=1$ for $0\leq r\leq n$ 
imply that the Grundy function $g_{q}(d)$ satisfies the initial condition $g_{q}(0)=1$ and, 
for $d>0$, the recurrence
\begin{equation*}
g_{q}(d)=\mex\Bigl\{ \,
\overset{d-1}{\underset{k=m}{\,\,\,\nimsum}} \, g_{q}(k)
\, \Big| \, 
m=0,1,\dots,d
\, \Bigr\}.
\end{equation*}
By Lemma~\ref{lem:chain-ruler-recurrence}, we therefore conclude that $g_{q}(d)=\fnh{d+1}$.
\item\label{item:proof-odd}
\begin{subequations}
Let $q$ be odd, and let $s_{q}(d,m)$ and $g_{q}(d)$ be defined as above.
To prove the theorem we define a function $H:\Int\to\Non$ by $H(x):=\MOD(x,3)+1$.
Then it clearly satisfies
\[
H(x+3)=H(x),\qquad H(x) \nimadd H(x+1)\nimadd H(x+2)=0
\]
for all $x\in\Int$.
To complete the proof for $q$ odd case,
it suffices to prove the following claims.
\begin{enumerate}[label=(\alph*)]
\item\label{it:claim:sq(d,m)}
For nonnegative integers $d,m\in\Non$ with $0\leq m\leq d$,
we have
\begin{equation}\label{eq:claim:sq(d,m)}
s_{q}(d,m)=\begin{cases}
0,\qquad&\text{ if $d \equiv m$ ($\MOD\,3$),}\\
H(m),\qquad &\text{ if $d \not\equiv m$ ($\MOD\,3$).}
\end{cases}
\end{equation}
\item\label{it:claim:gq(d)}
For all $d\in\Non$,
\begin{equation}\label{eq:claim:gq(d)}
g_{q}(d)=H(d).
\end{equation}
\end{enumerate}
\end{subequations}
We prove the claim by induction on $d$,
with a nested induction on $d-m$.
\begin{enumerate}[label=\Roman*)]
\item\label{it:Case:d=0}
If $d=0$, then necessarily $m=0$ since $0\leq m\leq d$.
By definition, the sum in \eqref{eq:def:s(d,m)} is empty, and hence $s_{q}(0,0)=0$.
Moreover, $g_{q}(0)=\mex\{s_{q}(0,0)\}=1$.
Therefore, both claims \ref{it:claim:sq(d,m)} and \ref{it:claim:gq(d)} hold for $d=0$.
\item\label{it:Case:d>0}
Let $d\geq1$,
and assume that both claims \ref{it:claim:sq(d,m)} and \ref{it:claim:gq(d)} 
hold for all smaller values of $d$, that is, for $d-1$.
\begin{enumerate}[label=\roman*)]
\item\label{it:Case:d-m=0}
If $d-m=0$, that is, $m=d$, then the sum in \eqref{eq:def:s(d,m)} is empty, hence $s(d,d)=0$.
This verifies \eqref{eq:claim:sq(d,m)} for $d-m=0$.
\item\label{it:Case:d-m>0}
Now suppose $d-m\geq1$, and assume that \eqref{eq:claim:sq(d,m)} holds for $d-m-1$.
Using the recurrence \eqref{eq:rec:s(d,m)} together with the induction hypothesis,
we compute $s_{q}(d,m)$ as
\begin{equation*}
\begin{cases}
H(m+1)\nimadd H(m)\nimadd H(m-1)=0\quad&\text{ if $d-m \equiv 0$ ($\MOD\,3$)}\\
0\nimadd 0\nimadd H(d-1)=H(m)\quad&\text{ if $d-m \equiv 1$ ($\MOD\,3$)}\\
H(m+1)\nimadd H(m)\nimadd H(d-1)=H(m)\quad&\text{ if $d-m \equiv 2$ ($\MOD\,3$)}
\end{cases}
\end{equation*}
This establishes \eqref{eq:claim:sq(d,m)} for $d-m$.
\end{enumerate}
Combining Cases~\ref{it:Case:d-m=0} and~\ref{it:Case:d-m>0} completes the proof of \eqref{eq:claim:sq(d,m)} for $d$.
\par
Next we prove \eqref{eq:claim:gq(d)} for $d$.
Let $S_{q}(d)=\{s_{q}(d,m)\,\mid\,m=0,1,\dots,d\}$.
Then \eqref{eq:rec-mex} gives $g_{q}(d)=\mex(S_{q}(d))$.
If $d=1$, then we have $S_{q}(d)=\{0,1\}$, and hence $\mex(S_{q}(1))=2=H(1)$, as claimed.
If $d\geq2$, then the above result implies that $H(d)\not\in S_{q}(d)$,
that $0\in S_{q}(d)$, 
and that $H(m)\in S_{q}(d)$ whenever $m\not\equiv d$ ($\MOD\,3$).
Consequently, $\mex(S_{q}(d))=H(d)$, which establishes \eqref{eq:claim:gq(d)} for $d$.
\end{enumerate}
Combining Cases~\ref{it:Case:d=0} and~\ref{it:Case:d>0},
we conclude that both \ref{it:claim:sq(d,m)} and \ref{it:claim:gq(d)} hold for all $d\in\Non$.
\end{enumerate} 
Hence, cases \ref{item:proof-even} and \ref{item:proof-odd} complete 
the proof of the theorem for both the even and odd cases.
\end{demo}
%
\subsection{The Ideal Game on the ASM Poset}\label{sbsec:ASM-ideal}
In this subsection, we focus on the ASM poset $X=\ASM{n}$,
and study the order ideal game on $X$; that is, we set
$
	\T=\OI
	:=
	\left\{
		{\Lambda}_{ \boldsymbol{x} } \in J\left( \ASM{n} \right)
	\,\middle| \,
		\boldsymbol{x} \in \ASM{n}
	\right\}
$
throughout this subsection.
\par
Now,
we denote 
\begin{equation*}
	\Y{n}:=\{
		(r, s) \in \Int^{2}
	\mid
		0 \leq s \leq r \leq n - 2
	\},
\end{equation*}
and define a map
\begin{equation*}
\pi \colon \ASM{n} \rightarrow \Y{n}
,\quad
\left( x, y, z \right) \mapsto \left( n - 2 - (x + y), z \right).
\end{equation*}
We write the coordinate projections of $\pi$ as
$\pi_{1}\left( x, y, z \right)=\rkf(x,y,z)=n - 2 - (x + y)$
and $\pi_{2}\left( x, y, z \right)=z$.
The following lemma then shows that the Grundy value $g\left(\x\right)$ 
depends only on the pair $\left(\pi_{1}\left(\x\right),\pi_{2}\left(\x\right)\right)$ for $\x=(x,y,z)\in \ASM{n}$.
%
\begin{lem}\label{lem:g_value_isomorphic}
Let $n$ be a positive integer.
	In the order ideal game 
	$\mathscr{G}\left( \ASM{n}, \OI \right)$,
	the Grundy value $g\left( \x \right)$ depends only on the pair $\left(\pi_{1}(\x),\pi_{2}(\x)\right)$.
\end{lem}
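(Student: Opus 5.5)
The plan is to show that the principal ideal $\Lambda_{\x}$, together with the restriction of the game to it, depends up to isomorphism only on $(\pi_{1}(\x),\pi_{2}(\x))$. I would then transfer Grundy values along that isomorphism using Proposition~\ref{prop:g_value_order_isomorphism_general}.

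\emph{Step 1: localize the game.} For any finite poset $X$ and any $\x\in X$, the value $g_{X,\OI}(\x)$ is determined by the recursion \eqref{eq:OIgame01} of Lemma~\ref{lem:OrderI01}. That recursion only involves values $g_{X,\OI}(\boldsymbol{t})$ with $\boldsymbol{t}\in\Lambda_{\x}$. For such $\boldsymbol{t}$, the principal ideal of $\boldsymbol{t}$ in $X$ coincides with its principal ideal in the induced subposet $\Lambda_{\x}$. An induction along a linear extension therefore gives
\[
g_{X,\OI}(\boldsymbol{t})=g_{\Lambda_{\x},\OI}(\boldsymbol{t})\qquad\text{for all }\boldsymbol{t}\in\Lambda_{\x}.
\]
In particular, $g(\x)$ equals the Grundy value of the maximum of the order ideal game on the poset $\Lambda_{\x}$ alone.

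\emph{Step 2: describe $\Lambda_{\x}$ by translation.} Let $\x=(x,y,z)$ and put $r=n-2-(x+y)=\pi_{1}(\x)$. By \eqref{eq:ASM-order}, $\Lambda_{\x}$ consists of the triples $(x',y',z')$ with $x'\ge x$, $y'\ge y$, $0\le z'\le z$, $x'+y'+z'\ge x+y+z$ and $x'+y'+z'\le n-2$. Substituting $a=x'-x$ and $b=y'-y$ turns these conditions into
\[
a,b\ge 0,\qquad 0\le z'\le z,\qquad z\le a+b+z'\le r .
\]
Call this set $Q_{r,z}$. The translation $\tau_{\x}\colon(x',y',z')\mapsto(x'-x,\,y'-y,\,z')$ is a bijection $\Lambda_{\x}\to Q_{r,z}$. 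The order relation \eqref{eq:ASM-order} is invariant under translating the first two coordinates, since all its conditions compare differences of coordinates and of coordinate sums. Hence $\tau_{\x}$ is an order isomorphism when $Q_{r,z}$ carries the order defined by the same formula. Consequently $\tau_{\x}$ sends principal ideals of $\Lambda_{\x}$ onto principal ideals of $Q_{r,z}$, which is exactly condition \eqref{eq:iso-T} for the families $\OI$.

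\emph{Step 3: conclude.} Suppose $\x,\x'$ satisfy $\pi(\x)=\pi(\x')=(r,z)$. Then $f=\tau_{\x'}^{-1}\circ\tau_{\x}\colon\Lambda_{\x}\to\Lambda_{\x'}$ is an order isomorphism mapping principal ideals to principal ideals, and it sends $\x$ to $\x'$. Proposition~\ref{prop:g_value_order_isomorphism_general} then gives $g_{\Lambda_{\x},\OI}(\x)=g_{\Lambda_{\x'},\OI}(\x')$, and Step~1 upgrades this to $g(\x)=g(\x')$.

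The only point needing care is Step~1: the recursion must genuinely see nothing outside $\Lambda_{\x}$, and the game on $\Lambda_{\x}$ must use exactly the principal ideals of the induced subposet. Both follow from transitivity of the order, so I expect no real obstacle; the rest is direct verification.
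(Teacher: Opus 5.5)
Your proposal is correct and is essentially the paper's proof: the composite $\tau_{\x'}^{-1}\circ\tau_{\x}$ is exactly the paper's shift $F(x,y,z)=(x+l,\,y-l,\,z)$ between $\Lambda_{\x_{1}}$ and $\Lambda_{\x_{2}}$, and you transfer Grundy values with Proposition~\ref{prop:g_value_order_isomorphism_general} just as the paper does. Your Step~1 makes explicit the localization to the restricted game on $\Lambda_{\x}$, which the paper uses implicitly when it passes to the games $\mathscr{G}(X_{i},\OI_{i})$.
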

\begin{demo}{Proof}
Let $\x_{1}=\left( x_{1}, y_{1}, z_{1} \right)$ and $\x_{2}=\left( x_{2}, y_{2}, z_{2} \right)$ be elements of $\ASM{n}$.
If 
$x_{1} + y_{1} = x_{2} + y_{2}$ 
and $z_{1} = z_{2}$,
then the principal order ideals
$\Lambda_{\x_{1}}$ and $\Lambda_{\x_{2}}$ are order isomorphic.
Indeed, setting $l=x_{2} - x_{1}$,
the map
\[
F \colon \Lambda_{\x_{1}} \rightarrow \Lambda_{\x_{2}}
,\qquad
F\left( x, y, z \right)=\left( x + l, y - l, z \right)
\]
is a well-defined order-preserving bijection.
Let $X_{i}=\Lambda_{\x_{i}}$ and $\OI_{i}=\{\Lambda_{\y}\in\OI\mid \y\in X_{i}\}$ for $i=1,2$.
Then the restricted games $\mathscr{G}\left( X_{1}, \OI_{1} \right)$ and $\mathscr{G}\left( X_{2}, \OI_{2} \right)$
satisfy condition \eqref{eq:iso-T} of Proposition~\ref{prop:g_value_order_isomorphism_general}.
Therefore, we have $g(\y)=g\left(F(\y)\right)$ for any $\y\in X_{1}$,
and in particular $g(\x_{1})=g(\x_{2})$.
\end{demo}
\par\smallskip
We next describe explicitly the structure of a principal order ideal in $\ASM{n}$.
%
\begin{lem}\label{lem:POI}
\begin{subequations}
	Let $n$ be a positive integer,
	and let $\x_{0}=(x_{0}, y_{0}, z_{0}) \in \ASM{n}$.
	The principal order ideal ${\Lambda}_{\x_{0}}$ is given by
	\begin{equation}\label{eq:POI}
		{\Lambda}_{\x_{0}}
		=
		\left\{
			\left( x, y, z \right) \in \mathbb{N}^{3}
		\, \middle| \,
			\begin{gathered}
				x \geq x_{0}, \,
				y \geq y_{0}, \,
				z \leq z_{0}, \\
				x_{0} + y_{0} + z_{0} \leq x + y + z \leq n - 2.
			\end{gathered}
		\right\}
		.
	\end{equation}
	For $\x_{0}\in\ASM{n}$ and $(r,s)\in\Y{n}$, let $\Lambda_{\x_{0}}(r,s)$ denote the set of all $\x\in \Lambda_{\x_{0}}$
	satisfying $\pi_{1}(\x)=r$ and $\pi_{2}(\x)=s$.
	Then $\Lambda_{\x_{0}}(r,s)$ is nonempty if and only if $(r,s)\in\R{n}\left(\pi_{1}(\x_{0}),\pi_{2}(\x_{0})\right)$,
where 
\begin{equation}\label{eq:pi-range}
\R{n}(r_{0},s_{0}):=\left\{(r,s)\in\Y{n} \mid 0\leq s\leq s_{0},\, 0\leq r-s\leq r_{0}-s_{0} \right\}.
\end{equation}
Moreover, for each $(r,s)\in\R{n}(\pi_{1}(\x_{0}),\pi_{2}(\x_{0}))$, the cardinality of $\Lambda_{\x_{0}}(r,s)$ is given by
\begin{equation}\label{eq:NumEleIdeal}
\left|\Lambda_{\x_{0}}(r,s)\right|=\pi_{1}(\x_{0})-r+1,
\end{equation}
which is, in particular, independent of $s$.
\end{subequations}
\end{lem}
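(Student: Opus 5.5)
The plan is to first establish the description \eqref{eq:POI} directly from the order \eqref{eq:ASM-order}, and then count lattice points fibre by fibre. By definition, $\x=(x,y,z)\le \x_0$ means $x\ge x_0$, $y\ge y_0$, $z\le z_0$ and $x+y+z\ge x_0+y_0+z_0$. Membership of $\x$ in $\ASM{n}$ adds nonnegativity and the inequality $x+y+z\le n-2$. Together these give exactly \eqref{eq:POI}. Note that $x,y\ge0$ are automatic from $x\ge x_0\ge0$, while $z\ge0$ must be kept, which is why the set is written inside $\mathbb{N}^3$. So \eqref{eq:POI} is just a restatement of the definitions.

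Next I would parametrize the fibre $\Lambda_{\x_0}(r,s)$. Put $r_0=\pi_1(\x_0)=n-2-x_0-y_0$ and $s_0=z_0$. An element with $\pi_1=r$, $\pi_2=s$ has $z=s$ and $x+y=n-2-r$, so $x+y+z=n-2-r+s$. The conditions in \eqref{eq:POI} then become the following:
\begin{itemize}
\item $0\le s\le s_0$;
\item $n-2-r+s\le n-2$, i.e.\ $s\le r$;
\item $x_0+y_0+z_0\le n-2-r+s$, i.e.\ $r-s\le r_0-s_0$;
\item $x\ge x_0$, $y\ge y_0$ and $x+y=n-2-r$.
\end{itemize}
The last condition is solvable precisely when $n-2-r\ge x_0+y_0$, i.e.\ $r\le r_0$. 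This is implied by $r-s\le r_0-s_0$ together with $s\le s_0$. Also $r\le r_0\le n-2$, so $(r,s)\in\Y{n}$.

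Hence the fibre is nonempty if and only if $0\le s\le s_0$ and $0\le r-s\le r_0-s_0$, which is the set $\R{n}(r_0,s_0)$ of \eqref{eq:pi-range}. For the converse direction, I need the hidden constraint $r_0\ge s_0$, which holds because $\x_0\in\ASM{n}$ gives $x_0+y_0+z_0\le n-2$.

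For the count \eqref{eq:NumEleIdeal}, once $(r,s)\in\R{n}(r_0,s_0)$, the only remaining freedom is the choice of $x$ with $x_0\le x\le n-2-r-y_0$, with $y$ then determined. The number of choices is $(n-2-r-y_0)-x_0+1=r_0-r+1$, which is independent of $s$.

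The argument is entirely elementary. The only step needing care is the bookkeeping in the equivalence, namely checking that no constraint is lost or redundant. In particular, the upper bound $x+y+z\le n-2$ must translate to $s\le r$, and the condition $r\le r_0$ must be implied rather than imposed separately.
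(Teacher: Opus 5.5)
Your proposal is correct and follows the paper's proof. Both rewrite the defining inequalities of $\Lambda_{\x_0}$ in the coordinates $r=n-2-(x+y)$, $s=z$ to get $0\le s\le s_0$ and $0\le r-s\le r_0-s_0$, and then count the free choice $x_0\le x\le x_0+r_0-r$ to obtain $r_0-r+1$.

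Your explicit check that $r\le r_0$ follows from these inequalities, so that the $x$-range is nonempty, makes the ``if'' direction a little more careful than the paper's. The appeal to $r_0\ge s_0$ is not needed, since $(r,s)\in\R{n}(r_0,s_0)$ already forces $r_0-s_0\ge r-s\ge 0$.
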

\begin{demo}{Proof}
The identity \eqref{eq:POI} follows immediately from the definition \eqref{eq:ASM-order}.
Let $\pi(\x_{0})=(r_{0},s_{0})$.
Since
$r_{0}=n-2-x_{0}-y_{0}$,
$s_{0}=z_{0}$,
$r=n-2-x-y$,
and $s=z$,
the inequality
$x_{0} + y_{0} + z_{0} \leq x + y + z \leq n - 2$
is equivalent to $0\leq r-s\leq r_{0}-s_{0}$.
Furthermore, by \eqref{eq:POI} we have $s=z\leq z_{0}=s_{0}$.
Consequently, we obtain 
\[
\max(r-r_{0}+s_{0},0)\leq s\leq \min(r,s_{0}),
\]
which is equivalent to \eqref{eq:pi-range}.
\par
Finally, substituting $y=n-2-r-x$ and $y_{0}=n-2-r_{0}-x_{0}$ into the inequality $y\geq y_{0}$
yields 
$x_{0}\leq x\leq x_{0}+r_{0}-r$.
Hence, for fixed $r$ and $s$, there are exactly $r_{0}-r+1$ possible values of $x$.
This completes the proof of \eqref{eq:NumEleIdeal}.
\end{demo}
%
%
%
\begin{figure}[htbp]
\centering
\begin{subfigure}[b]{0.45\textwidth}
    \centering
	\begin{tikzpicture}[scale=0.75]
		\coordinate (base_x) at (225:1);
		\coordinate (base_y) at (1, 0);
		\coordinate (base_z) at (0, 1);
		\draw[->, help lines, dashed] 
			($0*(base_x) + 0*(base_y) + 0*(base_z)$)
			--
			($4*(base_x) + 0*(base_y) + 0*(base_z)$);
		\node at ($4.25*(base_x) + 0*(base_y) + 0*(base_z)$){$x$};
		\draw[->, help lines, dashed] 
			($0*(base_x) + 0*(base_y) + 0*(base_z)$)
			--
			($0*(base_x) + 4*(base_y) + 0*(base_z)$);
		\node at ($0*(base_x) + 4.25*(base_y) + 0*(base_z)$){$y$};
		\draw[->, help lines, dashed] 
			($0*(base_x) + 0*(base_y) + 0*(base_z)$)
			--
			($0*(base_x) + 0*(base_y) + 4*(base_z)$);
		\node at ($0*(base_x) + 0*(base_y) + 4.25*(base_z)$){$z$};
		\foreach \x in {0, 1, 2, 3}{
			\coordinate (v_\x_0_0) at ($\x*(base_x) + 0*(base_y) + 0*(base_z)$);
		}
		\foreach \x in {0, 1, 2}{
			\coordinate (v_\x_1_0) at ($\x*(base_x) + 1*(base_y) + 0*(base_z)$);
		}
		\foreach \x in {0, 1}{
			\coordinate (v_\x_2_0) at ($\x*(base_x) + 2*(base_y) + 0*(base_z)$);
		}
		\foreach \x in {0}{
			\coordinate (v_\x_3_0) at ($\x*(base_x) + 3*(base_y) + 0*(base_z)$);
		}
		\foreach \x in {0, 1, 2}{
			\coordinate (v_\x_0_1) at ($\x*(base_x) + 0*(base_y) + 1*(base_z)$);
		}
		\foreach \x in {0, 1}{
			\coordinate (v_\x_1_1) at ($\x*(base_x) + 1*(base_y) + 1*(base_z)$);
		}
		\foreach \x in {0}{
			\coordinate (v_\x_2_1) at ($\x*(base_x) + 2*(base_y) + 1*(base_z)$);
		}
		\foreach \x in {0, 1}{
			\coordinate (v_\x_0_2) at ($\x*(base_x) + 0*(base_y) + 2*(base_z)$);
		}
		\foreach \x in {0}{
			\coordinate (v_\x_1_2) at ($\x*(base_x) + 1*(base_y) + 2*(base_z)$);
		}
		\foreach \x in {0}{
			\coordinate (v_\x_0_3) at ($\x*(base_x) + 0*(base_y) + 3*(base_z)$);
		}
		\foreach \x/\xx in {3/2}{
			\draw[directed] (v_\x_0_0) -- (v_\xx_0_0);
		}
		\foreach \x/\xx in {2/1, 1/0}{
			\draw[directed, help lines] (v_\x_0_0) -- (v_\xx_0_0);
		}
		\foreach \x/\xx in {2/1}{
			\draw[directed] (v_\x_1_0) -- (v_\xx_1_0);
		}
		\foreach \x/\xx in {1/0}{
			\draw[directed, help lines] (v_\x_1_0) -- (v_\xx_1_0);
		}
		\foreach \x/\xx in {1/0}{
			\draw[directed] (v_\x_2_0) -- (v_\xx_2_0);
		}
		\foreach \y/\yy in {3/2}{
			\draw[directed] (v_0_\y_0) -- (v_0_\yy_0);
		}
		\foreach \y/\yy in {2/1, 1/0}{
			\draw[directed, help lines] (v_0_\y_0) -- (v_0_\yy_0);
		}
		\foreach \y/\yy in {2/1}{
			\draw[directed] (v_1_\y_0) -- (v_1_\yy_0);
		}
		\foreach \y/\yy in {1/0}{
			\draw[directed, help lines] (v_1_\y_0) -- (v_1_\yy_0);
		}
		\foreach \y/\yy in {1/0}{
			\draw[directed] (v_2_\y_0) -- (v_2_\yy_0);
		}
		\foreach \x/\xx in {2/1}{
			\draw[directed] (v_\x_0_1) -- (v_\xx_0_1);
		}
		\foreach \x/\xx in {1/0}{
			\draw[directed, help lines] (v_\x_0_1) -- (v_\xx_0_1);
		}
		\foreach \x/\xx in {1/0}{
			\draw[directed] (v_\x_1_1) -- (v_\xx_1_1);
		}
		\foreach \y/\yy in {2/1}{
			\draw[directed] (v_0_\y_1) -- (v_0_\yy_1);
		}
		\foreach \y/\yy in {1/0}{
			\draw[directed, help lines] (v_0_\y_1) -- (v_0_\yy_1);
		}
		\foreach \y/\yy in {1/0}{
			\draw[directed] (v_1_\y_1) -- (v_1_\yy_1);
		}
		\foreach \x/\xx in {1/0}{
			\draw[directed] (v_\x_0_2) -- (v_\xx_0_2);
		}
		\foreach \y/\yy in {1/0}{
			\draw[directed] (v_0_\y_2) -- (v_0_\yy_2);
		}
		\foreach \x/\z/\xx/\zz in {3/0/2/1, 2/1/1/2}{
			\draw[directed] (v_\x_0_\z) --(v_\xx_0_\zz);
		}
		\foreach \x/\z/\xx/\zz in {1/2/0/3}{
			\draw[directed, help lines] (v_\x_0_\z) --(v_\xx_0_\zz);
		}
		\foreach \x/\z/\xx/\zz in {2/0/1/1, 1/1/0/2}{
			\draw[directed] (v_\x_1_\z) --(v_\xx_1_\zz);
		}
		\foreach \x/\z/\xx/\zz in {1/0/0/1}{
			\draw[directed] (v_\x_2_\z) --(v_\xx_2_\zz);
		}
		\foreach \y/\z/\yy/\zz in {3/0/2/1, 2/1/1/2}{
			\draw[directed] (v_0_\y_\z) --(v_0_\yy_\zz);
		}
		\foreach \y/\z/\yy/\zz in {1/2/0/3}{
			\draw[directed, help lines] (v_0_\y_\z) --(v_0_\yy_\zz);
		}
		\foreach \y/\z/\yy/\zz in {2/0/1/1, 1/1/0/2}{
			\draw[directed] (v_1_\y_\z) --(v_1_\yy_\zz);
		}
		\foreach \y/\z/\yy/\zz in {1/0/0/1}{
			\draw[directed] (v_2_\y_\z) --(v_2_\yy_\zz);
		}
		\foreach \x/\z/\xx/\zz in {2/0/1/1, 1/1/0/2}{
			\draw[directed] (v_\x_0_\z) --(v_\xx_0_\zz);
		}
		\foreach \x/\z/\xx/\zz in {1/0/0/1}{
			\draw[directed] (v_\x_1_\z) --(v_\xx_1_\zz);
		}
		\foreach \y/\z/\yy/\zz in {2/0/1/1, 1/1/0/2}{
			\draw[directed] (v_0_\y_\z) --(v_0_\yy_\zz);
		}
		\foreach \y/\z/\yy/\zz in {1/0/0/1}{
			\draw[directed] (v_1_\y_\z) --(v_1_\yy_\zz);
		}
		\foreach \x/\z/\xx/\zz in {1/0/0/1}{
			\draw[directed, help lines] (v_\x_0_\z) --(v_\xx_0_\zz);
		}
		\foreach \x in {0, 1}{
			\fill[draw=black, fill=white] (v_\x_0_0) circle (2pt);
		}
		\foreach \x in {2, 3}{
			\fill[draw=black, fill=white] (v_\x_0_0) circle (2pt);
			\draw[fill=black, fill opacity=0.3] (v_\x_0_0) circle [radius=2pt];
		}
		\foreach \x in {0}{
			\fill[draw=black, fill=white] (v_\x_1_0) circle (2pt);
		}
		\foreach \x in {1, 2}{
			\fill[draw=black, fill=white] (v_\x_1_0) circle (2pt);
			\draw[fill=black, fill opacity=0.3] (v_\x_1_0) circle [radius=2pt];
		}
		\foreach \x in {0, 1}{
			\fill[draw=black, fill=white] (v_\x_2_0) circle (2pt);
			\draw[fill=black, fill opacity=0.3] (v_\x_2_0) circle [radius=2pt];
		}
		\foreach \x in {0}{
			\fill[draw=black, fill=white] (v_\x_3_0) circle (2pt);
			\fill[draw=black, fill=black, fill opacity=0.3] (v_\x_3_0) circle (2pt);
		}
		\foreach \x in {0}{
			\fill[draw=black, fill=white] (v_\x_0_1) circle (2pt);
		}
		\foreach \x in {1, 2}{
			\fill[draw=black, fill=white] (v_\x_0_1) circle (2pt);
			\fill[draw=black, fill=black, fill opacity=0.3] (v_\x_0_1) circle (2pt);
		}
		\foreach \x in {0, 1}{
			\fill[draw=black, fill=white] (v_\x_1_1) circle (2pt);
			\fill[draw=black, fill=black, fill opacity=0.3] (v_\x_1_1) circle (2pt);
		}
		\foreach \x in {0}{
			\fill[draw=black, fill=white] (v_\x_2_1) circle (2pt);
			\fill[draw=black, fill=black, fill opacity=0.3] (v_\x_2_1) circle (2pt);
		}
		\foreach \x in {0}{
			\fill[draw=black, fill=black] (v_\x_0_2) circle (2pt) node [above right] {\tiny$(0,0,2)$};
		}
		\foreach \x in {1}{
			\fill[draw=black, fill=white] (v_\x_0_2) circle (2pt);
			\fill[draw=black, fill=black, fill opacity=0.3] (v_\x_0_2) circle (2pt);
		}
		\foreach \x in {0}{
			\fill[draw=black, fill=white] (v_\x_1_2) circle (2pt);
			\fill[draw=black, fill=black, fill opacity=0.3] (v_\x_1_2) circle (2pt);
		}
		\foreach \x in {0}{
			\fill[draw=black, fill=white] (v_\x_0_3) circle (2pt);
		}
	\end{tikzpicture}
	\vskip-1cm
    \caption{\scriptsize The elements of ${\Lambda}_{ (0, 0, 2) }$}
    \label{fig:ideal002a}
\end{subfigure}
\hfill
\begin{subfigure}[b]{0.45\textwidth}
    \centering
\begin{tikzpicture}[scale=0.75]
\pgfmathsetmacro{\m}{3}
\pgfmathsetmacro{\l}{int((\m-1)/2)}
\pgfmathsetmacro{\mm}{\m-1}
    \draw [
        pattern=dots, 
        pattern color=black!50!white,
        draw=black,
        line width=0pt
    ] (0, 0) -- (1, 0) -- (3, 2) -- (2, 2) -- cycle;
\foreach \y in {1,...,\mm}{
  \draw [help lines] (\y,\y)--(\m,\y);
}
\draw [>-Stealth,help lines] (0,0)--(\m+0.8,0) node [right] {$r=$rank};
\foreach \x in {1,...,\mm}{
  \draw [help lines] (\x,0)--(\x,\x);
}
\draw [>-Stealth,help lines] (\m,0)--(\m,\m+0.8) node [above] {$s$};
\foreach \x in {0,...,\m} {
  \pgfmathsetmacro{\px}{7-\x}
  \foreach \y in {0,...,\x} {
    \fill[draw=black,fill=white] (\x,\y) circle [radius=3pt];
  }
}
\foreach \x in {0,...,\m} {
  \draw (\x,0) circle [radius=3pt] node[text opacity=0.5,below left] {\scriptsize$\x$};
}
\foreach \y in {0,...,\m} {
  \draw (\m,\y) circle [radius=3pt] node [text opacity=0.5,above right] {\scriptsize$\y$};
}
\coordinate (O) at (3,2);
\fill [black] (O) circle [radius=3pt];
\foreach \x in {0,...,2}{
  \draw[fill=black, fill opacity=0.3] (\x+1,\x) circle [radius=3pt];
}
\foreach \x in {0,...,2}{
  \draw[fill=black, fill opacity=0.3] (\x,\x) circle [radius=3pt];
}
\draw (0,0) circle [radius=3pt] node [text opacity=0.8,above left] {\scriptsize$4$};
\draw (1,0) circle [radius=3pt] node [text opacity=0.8,above left] {\scriptsize$3$};
\draw (1,1) circle [radius=3pt] node [text opacity=0.8,above left] {\scriptsize$3$};
\draw (2,0) circle [radius=3pt] node [text opacity=0.8,above left] {\scriptsize$0$};
\draw (2,1) circle [radius=3pt] node [text opacity=0.8,above left] {\scriptsize$2$};
\draw (2,2) circle [radius=3pt] node [text opacity=0.8,above left] {\scriptsize$2$};
\draw (3,0) circle [radius=3pt] node [text opacity=0.8,above left] {\scriptsize$0$};
\draw (3,1) circle [radius=3pt] node [text opacity=0.8,above left] {\scriptsize$0$};
\draw (3,2) circle [radius=3pt] node [text opacity=0.8,above left] {\scriptsize$1$};
\draw (3,3) circle [radius=3pt] node [text opacity=0.8,above left] {\scriptsize$0$};
\end{tikzpicture}
    \caption{$\left|\Lambda_{(0,0,2)}(r,s)\right|$}
    \label{fig:ideal002b}
\end{subfigure}
\caption{${\Lambda}_{ (0, 0, 2) } \in J\left( \ASM{5} \right)$}
\label{fig:ideal002}
\end{figure}
%
%
\par
We illustrate the order ideal ${\Lambda}_{(0, 0, 2)} \in J\left( \ASM{5} \right)$
in Figure~\ref{fig:ideal002a}.
In this figure,
$(0, 0, 2)$ is drawn as a closed circle;
each element in $\Lambda_{(0, 0, 2)}\setminus\{(0,0,2)\}$ is drawn as a grey circle;
and all other elements of $\ASM{5}$ are drawn as open circles.
\par
Figure~\ref{fig:ideal002b} illustrates the image of $\Lambda_{(0, 0, 2)}$
under $\pi$.
The closed circle represents $\pi\left(0, 0, 2\right)$,
and the grey circles represent the points in $\pi\left(\Lambda_{(0, 0, 2)}\setminus\{(0, 0, 2)\}\right)$;
whereas the open circles lie outside this image.
For each vertex,
the number written to its upper left indicates how many elements of $\Lambda_{(0, 0, 2)}$
are mapped by $\pi$ to the coordinates $(r,s)$.
\par
\begin{demo}{Proof of Theorem~\ref{th:ASM-ideal}}
The Grundy function takes only the values $0$ or $1$,
according to the condition \eqref{eq:OIgame01} in Lemma~\ref{lem:OrderI01}.
Hence, for each $\x_{0}\in\ASM{n}$, we determine $g(\x_{0})$
by counting the number of elements in
$\Lambda_{\x_{0}}\setminus\{\x_{0}\}$ whose Grundy values is $1$.
\par
The identity \eqref{eq:ASM-ideal-grundy} appearing in Theorem~\ref{th:ASM-ideal} can be equivalently restated as follows.
\par\smallskip\noindent
\cqs\hspace*{2ex}
\begin{minipage}[]{\textwidth-\cqswidth-2ex}
The elements whose Grundy value is $1$ are precisely those $(x,y,z)$ satisfying:
\begin{enumerate}[label=(\Alph*)]
\item\label{it:(0,0)}
$\pi(x,y,z)=(0,0)$, or
\item\label{it:odd-rank}
$\pi(x,y,z)=(2k+1,k)$ or $(2k+1,k+1)$ for $k\in\Non$.
\end{enumerate}
\end{minipage}
\par\smallskip\noindent
Hereafter, we write $P_{k}:=(2k+1,k)$ and $Q_{k}:=(2k+1,k+1)$ for each $k\in\Non$.
Figure~\ref{fig:pi(ideal)} illustrates this condition:
the closed circles represent the points whose Grundy value is $1$,
whereas the open circles represent those whose Grundy value is $0$.
\par
Assume $\x_{0}\in\ASM{n}$ and $\pi(\x_{0})=(r_{0},s_{0})$.
We prove $(*)$ by induction on $\rkf(\x_{0})=r_{0}$.
\par\noindent
(i) If $\rkf(\x_{0})=0$, then $\x_{0}$ is a minimal element of $\ASM{n}$.
Hence $\Lambda_{\x_{0}}\setminus\{\x_{0}\}=\emptyset$, which implies
$\underset{t\in\Lambda_{\x_{0}}\setminus\{\x_{0}\}}{\nimsum}g(t)=0$.
Therefore, by \eqref{eq:FT_of_T_game_1}, we obtain $g(\x_{0})=\mex\{0\}=1$.
This shows that $\x_{0}$ is a type \ref{it:(0,0)} element.
\par\noindent
(ii) Assume $\rkf(\x_{0})>0$.
If $\x\in\Lambda_{\x_{0}}\setminus\{\x_{0}\}$ then $\x<\x_{0}$, and hence $\rkf(\x)<\rkf(\x_{0})$.
Therefore, $(*)$ holds for every $\x\in\Lambda_{\x_{0}}\setminus\{\x_{0}\}$ by the induction hypothesis.
\begin{enumerate}[label=\arabic*), leftmargin=20pt]
\item
First, assume that $\rkf(\x_{0})=r_{0}$ is odd;
equivalently, there exists $k_{0}\in\Non$ such that $r_{0}=2k_{0}+1$.
We begin by counting the elements of type \ref{it:(0,0)} in $\Lambda_{\x_{0}}\setminus\{\x_{0}\}$.
By \eqref{eq:NumEleIdeal},
the number of elements $\x\in\Lambda_{\x_{0}}\setminus\{\x_{0}\}$ with $\pi(\x)=(0,0)$ is $2k_{0}+2$,
which is even,
and thus these elements do not affect the parity of the cardinality of the elements in $\Lambda_{\x_{0}}\setminus\{\x_{0}\}$ 
whose Grundy value equals $1$.
\par
Next we count the elements of type \ref{it:odd-rank} in $\Lambda_{\x_{0}}\setminus\{\x_{0}\}$.
Observe that for each $k,s\in\Non$ such that $(2k+1,s)\in\R{n}(r_{0},s_{0})$,
the number of elements $\x\in\Lambda_{\x_{0}}$ satisfying $\pi(\x)=(2k+1,s)$ is $2(k_{0}-k)+1$,
which is odd.
We now check the following two cases.
\begin{enumerate}[label=\alph*)]
\item
Suppose $0\leq s_{0}\leq k_{0}$.
Then it is straightforward to see that $P_{k}\in \R{n}(r_{0},s_{0})$ for $0\leq k\leq s_{0}$,
wheras $Q_{k}\in \R{n}(r_{0},s_{0})$ for $0\leq k< s_{0}$.
For all other values of $k$, neither $P_{k}$ nor $Q_{k}$ lies in $\R{n}(r_{0},s_{0})$.
If $s_{0}< k_{0}$,
then neither $P_{k}$ nor $Q_{k}$ can coincide with $\pi(\x_{0})$,
and hence the total number of elements of type~\ref{it:odd-rank} is odd.
Therefore $g(\x_{0})=0$ by Lemma~\ref{lem:OrderI01}.
If $s_{0}=k_{0}$,
then an element $\x$ satisfing $\pi(\x)=P_{k_{0}}$ can occur only in the case $\x=\x_{0}$.
Excluding this case,
the total number of elements of type \ref{it:odd-rank} in $\Lambda_{\x_{0}}\setminus\{\x_{0}\}$
is even.
Therefore $g(\x_{0})=1$ by Lemma~\ref{lem:OrderI01}.
\item
Suppose $s_{0}\geq k_{0}+1$.
Then it is also easy to see that $P_{k}\in \R{n}(r_{0},s_{0})$ if $0\leq k\leq 2k_{0}-s_{0}$,
wheras $Q_{k}\in \R{n}(r_{0},s_{0})$ if $0\leq k\leq 2k_{0}-s_{0}+1$.
For all other values of $k$, neither $P_{k}$ nor $Q_{k}$ lies in $\R{n}(r_{0},s_{0})$.
If $s_{0}=k_{0}+1$,
then $\pi(\x)=Q_{k_{0}}$ can occur only in the case $\x=\x_{0}$.
Excluding this case,
the total number of elements of type \ref{it:odd-rank} in $\Lambda_{\x_{0}}\setminus\{\x_{0}\}$
is even,
and therefore $g(\x_{0})=1$ by Lemma~\ref{lem:OrderI01}.
If $s_{0}> k_{0}+1$,
then neither $P_{k}$ nor $Q_{k}$ can coincide with $\pi(\x_{0})$.
Consequently, the total number of elements of type \ref{it:odd-rank} is odd,
and therefore $g(\x_{0})=0$ by Lemma~\ref{lem:OrderI01}.
\end{enumerate}
This completes the proof of $(*)$ when $\rkf(\x_{0})=r_{0}=2k_{0}+1$.
\item
Next, assume that $\rkf(\x_{0})=r_{0}$ is even.
Then there exists $k_{0}\in\Non$ such that $r_{0}=2k_{0}$.
We first count the elements of type~\ref{it:(0,0)}.
By \eqref{eq:NumEleIdeal},
the number of elements $\x\in\Lambda_{\x_{0}}\setminus\{\x_{0}\}$ with $\pi(\x)=(0,0)$ is $2k_{0}+1$,
which is odd.
\par\smallskip
Next we count the elements of type~\ref{it:odd-rank} in $\Lambda_{\x_{0}}\setminus\{\x_{0}\}$.
By \eqref{eq:NumEleIdeal},
the number of elements $\x\in\Lambda_{\x_{0}}\setminus\{\x_{0}\}$ with $\rkf(\x)=2k+1$ is $2(k_{0}-k)$,
which is even.
Thus these elements do not contribute to the parity.
Consequently, the total number of elements in $\Lambda_{\x_{0}}\setminus\{\x_{0}\}$
whose Grundy value is $1$ is odd.
Therefore, by Lemma~\ref{lem:OrderI01}, we conclude that $g(\x_{0})=0$.
This proves $(*)$ when $\rkf(\x_{0})=r_{0}=2k_{0}$.
\end{enumerate}
Consequently we conclude that $(*)$ holds for all $\x_{0}$ with $\rkf(\x_{0})=r_{0}$.
By induction, the proof is complete.
\end{demo}
%
%
%
%
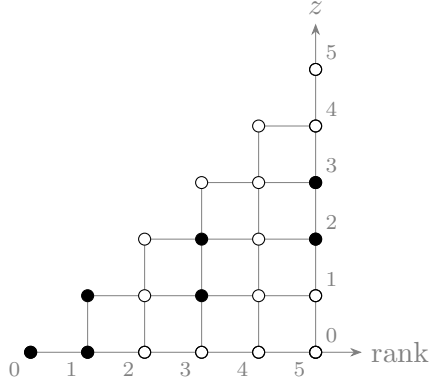
\begin{figure}[htbp]
\centering
\begin{tikzpicture}[scale=0.75]
\pgfmathsetmacro{\m}{5}
\pgfmathsetmacro{\l}{int((\m-1)/2)}
\pgfmathsetmacro{\mm}{\m-1}
\foreach \y in {1,...,\mm}{
  \draw [help lines] (\y,\y)--(\m,\y);
}
\draw [>-Stealth,help lines] (0,0)--(\m+0.8,0) node [right] {rank};
\foreach \x in {1,...,\mm}{
  \draw [help lines] (\x,0)--(\x,\x);
}
\draw [>-Stealth,help lines] (\m,0)--(\m,\m+0.8) node [above] {$z$};
\foreach \x in {0,...,\m} {
  \pgfmathsetmacro{\px}{7-\x}
  \foreach \y in {0,...,\x} {
    \fill[draw=black,fill=white] (\x,\y) circle [radius=3pt];
  }
}
\foreach \x in {0,...,\m} {
  \draw (\x,0) circle [radius=3pt] node[text opacity=0.5,below left] {\scriptsize$\x$};
}
\foreach \y in {0,...,\m} {
  \draw (\m,\y) circle [radius=3pt] node [text opacity=0.5,above right] {\scriptsize$\y$};
}
\coordinate (O) at (0,0);
\fill [black] (O) circle [radius=3pt];
\foreach \x in {0,...,\l}{
  \fill[black] (2*\x+1,\x+0) circle [radius=3pt];
  \fill[black] (2*\x+1,\x+1) circle [radius=3pt];
}
\end{tikzpicture}
\caption{Place of $1$'s in $\R{n}(r,s)$}
\label{fig:pi(ideal)}
\end{figure}
%
%

%
%
%
%
%
\section{Open problems}\label{sc:open}
%
%
%
%
\subsection{The Ruler on the ASM Poset}
We assume that the underlying poset is $X=\ASM{n}$ throughout this subsection.
The order ideal game on $X$ is studied in \S~\ref{sbsec:ASM-ideal}; 
in the present subsection we instead take the turning set to be the family $\I$ of all intervals in $X$
and consider the ruler on $X$.
\par
In Lemma~\ref{lem:g_value_isomorphic},
we show that $g_{\OI}(\x_{1})=g_{\OI}(\x_{2})$ whenever $\pi(\x_{1})=\pi(\x_{2})$.
The same property holds for the ruler on $\ASM{n}$; that is,
$g_{\I}(\x_{1})=g_{\I}(\x_{2})$ whenever $\pi(\x_{1})=\pi(\x_{2})$,
since the interval family $\I$ satisfies condition \eqref{eq:iso-T}.
Consequently, the map ${\overline g}_{\I}\colon \Y{n} \to \Non$, ${\overline g}_{\I}(s,t)=g_{\I}(\x)$ is well-defined,
where $\x \in \ASM{n}$ is any element satisfying $\pi(\x)=(s,t)$.
\par
Next, we present further order isomorphisms that lead to additional identifications of Grundy values.
%
\begin{prop}\label{lem:isomorphism_eta}
	Let $n$ be a positive integer,
	and let $\x = \left( x, y, z \right) \in \ASM{n}$.
	Define maps $\xi,\eta \colon \ASM{n} \to \ASM{n}$ by
	\[
	\xi\left( \x \right)=\left( y, x, z \right)
	\text{ and }
	\eta\left( \x \right)=\left( x, y, n - 2 - (x + y + z) \right).
	\]
	Then both $\xi$ and $\eta$ are order-preserving bijections of $\ASM{n}$.
	Further, for each of the families of turning sets $\T=\TT$, $\OI$, $\I$,
	the maps $\xi$ and $\eta$ satisfy condition \eqref{eq:iso-T} of Proposition~\ref{prop:g_value_order_isomorphism_general}.
	Consequently,
	we have $g_{\I}(x)=g_{\I}(\xi(x))=g_{\I}(\eta(x))$ for all $x\in\ASM{n}$.
	In particular, the involution $\eta$ yeilds the symmetry ${\overline g}_{\I}(s,t)={\overline g}_{\I}(s,s-t)$ for all $(s,t)\in \Y{n}$.
\end{prop}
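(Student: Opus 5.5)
The plan is to verify directly from the defining inequalities \eqref{eq:ASM-set} and \eqref{eq:ASM-order} that $\xi$ and $\eta$ are order automorphisms of $\ASM{n}$. Once this is done, condition \eqref{eq:iso-T} for $\TT$, $\OI$, $\I$ is automatic, and Proposition~\ref{prop:g_value_order_isomorphism_general} then gives the equality of Grundy values.

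\emph{Step 1: bijectivity.} Both maps preserve $\ASM{n}$. For $\xi$ this is clear. For $\eta$, set $z'=n-2-(x+y+z)$; then $z'\ge 0$, and $x+y+z'=n-2-z\le n-2$. Moreover $\xi^{2}=\mathrm{id}$ and $\eta^{2}=\mathrm{id}$: applying $\eta$ twice returns $n-2-(x+y+z')=z$. Hence both are involutive bijections.

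\emph{Step 2: order preservation.} For $\xi$, the four conditions in \eqref{eq:ASM-order} are symmetric in $x$ and $y$, so $\xi$ preserves the order. For $\eta$, suppose $(x_1,y_1,z_1)\le(x_2,y_2,z_2)$ and write $z_i'=n-2-(x_i+y_i+z_i)$. The conditions $x_1\ge x_2$ and $y_1\ge y_2$ are unchanged. The inequality $z_1'\le z_2'$ is equivalent to $x_1+y_1+z_1\ge x_2+y_2+z_2$, which is one of the original hypotheses. Finally, $x_1+y_1+z_1'=n-2-z_1$ and $x_2+y_2+z_2'=n-2-z_2$, so the last required inequality $x_1+y_1+z_1'\ge x_2+y_2+z_2'$ is equivalent to $z_1\le z_2$, which is also a hypothesis. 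Thus $\eta$ exchanges the roles of the third and fourth inequalities. Since $\eta^{-1}=\eta$, the inverse is order-preserving as well, so $\eta$ is an order automorphism; the same holds for $\xi$.

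\emph{Step 3: turning sets.} For any order automorphism $f$ of a finite poset $X$:
\begin{itemize}
\item $f(\{a,b\})=\{f(a),f(b)\}$ with $f(a)\le f(b)$;
\item $f(\Lambda_a)=\Lambda_{f(a)}$;
\item $f([a,b])=[f(a),f(b)]$.
\end{itemize}
Surjectivity of $f$ then gives $\{f(T)\mid T\in\T\}=\T$ for $\T=\TT,\OI,\I$, which is exactly condition \eqref{eq:iso-T} with $X_1=X_2=\ASM{n}$ and $\T_1=\T_2=\T$. Proposition~\ref{prop:g_value_order_isomorphism_general} therefore yields $g_{\T}(\x)=g_{\T}(\xi(\x))=g_{\T}(\eta(\x))$, in particular for $\T=\I$.

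\emph{Step 4: the symmetry of ${\overline g}_{\I}$.} Since $\eta$ fixes $x$ and $y$, it preserves the rank $\pi_1$. It sends the third coordinate $z$ to $\pi_1(\x)-z$, because $n-2-(x+y)-z=\pi_1(\x)-z$. Hence $\pi(\eta(\x))=(s,s-t)$ whenever $\pi(\x)=(s,t)$. Combining this with the well-definedness of ${\overline g}_{\I}$ gives ${\overline g}_{\I}(s,t)={\overline g}_{\I}(s,s-t)$.

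The only step needing care is the bookkeeping for $\eta$ in Step 2, where the third and fourth inequalities of \eqref{eq:ASM-order} swap; everything else is formal.
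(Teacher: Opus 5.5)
Your proof is correct and follows the same route as the paper. Both maps are involutions and hence bijections; the order preservation is checked directly, where you supply the details the paper leaves to the reader, notably that $\eta$ swaps the third and fourth inequalities of \eqref{eq:ASM-order}. Proposition~\ref{prop:g_value_order_isomorphism_general} then gives the invariance of $g_{\I}$, and the computation $\pi(\eta(\x))=(s,s-t)$ yields the symmetry ${\overline g}_{\I}(s,t)={\overline g}_{\I}(s,s-t)$.
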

\begin{demo}{Proof}
Both maps $\xi$ and $\eta$ are involutions and hence bijective.
A straightforward verification shows that they preserve the partial order on $\ASM{n}$, 
and we leave the details to the reader.
We now prove the final claim.
Let $(s,t)\in \Y{n}$, and choose $\x=(x,y,z)\in \ASM{n}$ such that $\pi(\x)=(n-2-(x+y),z)=(s,t)$.
Applying the involution $\eta$, we obtain $\pi\left(\eta(\x)\right)=\left(n-2-(x+y),n-2-(x+y+z)\right)=(s,s-t)$.
Since the Grundy function $g_{\I}$ is invariant under $\eta$, it follows that ${\overline g}_{\I}(s,t)={\overline g}_{\I}(s,s-t)$.
\end{demo}
\par\smallskip
Table~\ref{table:G_value-ruler} lists the values of the Grundy function $\overline g_{\I}(s,t)$ for small pairs $(s,t)$.
At present, no apparent closed-form description or simple pattern for these values is known.
\begin{table}[htbp]
	\begin{center}
	\begin{tikzpicture}[scale=0.5]
		\coordinate (base_1) at 
			(1, 0);
		\coordinate (base_2) at 
			(0, 1);
		\draw[->] 
			($- 0.5*(base_1) + 0.5*(base_2)$)
			--
			($- 0.5*(base_1) + 15.5*(base_2)$);
		\node at ($- 0.5*(base_1) + 15.5*(base_2)$)
			[above]
			{$z$};
		\foreach \z in {0, 1, ..., 14}{
			\node at ($- 0.5*(base_1) + \z*(base_2) + 0.5*(base_2)$)
				[left]
				{$\z$};
			\draw[help lines, dashed]
				 ($- 0.5*(base_1) + \z*(base_2) + 0.5*(base_2)$)
				 --
				 ($\z*(base_2) + 0.5*(base_2) + \z*(base_1)$);
		}
		\draw[->] 
			($0.5*(base_1) - 0.5*(base_2)$)
			--
			($15.5*(base_1) - 0.5*(base_2)$);
		\node at ($15.5*(base_1) - 0.5*(base_2)$)
			[right]
			{rank};
		\foreach \r in {0, 1, ..., 14}{
			\node at ($- 0.5*(base_2) + \r*(base_1) + 0.5*(base_1)$)
				[below]
				{$\r$};
			\draw[help lines, dashed]
				($- 0.5*(base_2) + \r*(base_1) + 0.5*(base_1)$)
				--
				($\r*(base_1) + 0.5*(base_1)$);
		}
		\coordinate (v_0) at ($0.5*(base_1) + 0.5*(base_2)$);
		\fill[draw=black, fill=gray, opacity=0.5]
			($(v_0) + 0*(base_1) - 0.5*(base_1) - 0.5*(base_2)$)
			--
			($(v_0) + 0*(base_1) + 0.5*(base_1) - 0.5*(base_2)$)
			--
			($(v_0) + 0*(base_1) + 0.5*(base_1) + 0.5*(base_2)$)
			--
			($(v_0) + 0*(base_1) - 0.5*(base_1) + 0.5*(base_2)$)
			--
			cycle;
		\foreach \i in {1, 2, ..., 14}{
			\fill[draw=black, fill=gray, opacity=0.5]
				($(v_0) + \i*(base_1) - 0.5*(base_1) - 0.5*(base_2)$)
				--
				($(v_0) + \i*(base_1) + 0.5*(base_1) - 0.5*(base_2)$)
				--
				($(v_0) + \i*(base_1) + 0.5*(base_1) + 0.5*(base_2)$)
				--
				($(v_0) + \i*(base_1) - 0.5*(base_1) + 0.5*(base_2)$)
				--
				cycle;
			\fill[draw=black, fill=gray, opacity=0.5]
				($(v_0) + \i*(base_1) + \i*(base_2) - 0.5*(base_1) - 0.5*(base_2)$)
				--
				($(v_0) + \i*(base_1) + \i*(base_2) + 0.5*(base_1) - 0.5*(base_2)$)
				--
				($(v_0) + \i*(base_1) + \i*(base_2) + 0.5*(base_1) + 0.5*(base_2)$)
				--
				($(v_0) + \i*(base_1) + \i*(base_2) - 0.5*(base_1) + 0.5*(base_2)$)
				--
				cycle;
		}
		\foreach \z in {1}{
			\coordinate (v_\z) at ($\z*(base_2) + 0.5*(base_2) + 2*\z*(base_1) + 0.5*(base_1)$);
			\fill[draw=black, fill=white, opacity=0.5]
				($(v_\z) + 0*(base_1) - 0.5*(base_1) - 0.5*(base_2)$)
				--
				($(v_\z) + 0*(base_1) + 0.5*(base_1) - 0.5*(base_2)$)
				--
				($(v_\z) + 0*(base_1) + 0.5*(base_1) + 0.5*(base_2)$)
				--
				($(v_\z) + 0*(base_1) - 0.5*(base_1) + 0.5*(base_2)$)
				--
				cycle;
			\foreach \i in {1, 2, ..., 12}{
				\fill[draw=black, fill=white, opacity=0.5]
					($(v_\z) + \i*(base_1) - 0.5*(base_1) - 0.5*(base_2)$)
					--
					($(v_\z) + \i*(base_1) + 0.5*(base_1) - 0.5*(base_2)$)
					--
					($(v_\z) + \i*(base_1) + 0.5*(base_1) + 0.5*(base_2)$)
					--
					($(v_\z) + \i*(base_1) - 0.5*(base_1) + 0.5*(base_2)$)
					--
					cycle;
				\fill[draw=black, fill=white, opacity=0.5]
					($(v_\z) + \i*(base_1) + \i*(base_2) - 0.5*(base_1) - 0.5*(base_2)$)
					--
					($(v_\z) + \i*(base_1) + \i*(base_2) + 0.5*(base_1) - 0.5*(base_2)$)
					--
					($(v_\z) + \i*(base_1) + \i*(base_2) + 0.5*(base_1) + 0.5*(base_2)$)
					--
					($(v_\z) + \i*(base_1) + \i*(base_2) - 0.5*(base_1) + 0.5*(base_2)$)
					--
					cycle;
			}
		}
		\foreach \z in {2}{
			\coordinate (v_\z) at ($\z*(base_2) + 0.5*(base_2) + 2*\z*(base_1) + 0.5*(base_1)$);
			\fill[draw=black, fill=gray, opacity=0.5]
				($(v_\z) + 0*(base_1) - 0.5*(base_1) - 0.5*(base_2)$)
				--
				($(v_\z) + 0*(base_1) + 0.5*(base_1) - 0.5*(base_2)$)
				--
				($(v_\z) + 0*(base_1) + 0.5*(base_1) + 0.5*(base_2)$)
				--
				($(v_\z) + 0*(base_1) - 0.5*(base_1) + 0.5*(base_2)$)
				--
				cycle;
			\foreach \i in {1, 2, ..., 10}{
				\fill[draw=black, fill=gray, opacity=0.5]
					($(v_\z) + \i*(base_1) - 0.5*(base_1) - 0.5*(base_2)$)
					--
					($(v_\z) + \i*(base_1) + 0.5*(base_1) - 0.5*(base_2)$)
					--
					($(v_\z) + \i*(base_1) + 0.5*(base_1) + 0.5*(base_2)$)
					--
					($(v_\z) + \i*(base_1) - 0.5*(base_1) + 0.5*(base_2)$)
					--
					cycle;
				\fill[draw=black, fill=gray, opacity=0.5]
					($(v_\z) + \i*(base_1) + \i*(base_2) - 0.5*(base_1) - 0.5*(base_2)$)
					--
					($(v_\z) + \i*(base_1) + \i*(base_2) + 0.5*(base_1) - 0.5*(base_2)$)
					--
					($(v_\z) + \i*(base_1) + \i*(base_2) + 0.5*(base_1) + 0.5*(base_2)$)
					--
					($(v_\z) + \i*(base_1) + \i*(base_2) - 0.5*(base_1) + 0.5*(base_2)$)
					--
					cycle;
			}
		}
		\foreach \z in {3}{
			\coordinate (v_\z) at ($\z*(base_2) + 0.5*(base_2) + 2*\z*(base_1) + 0.5*(base_1)$);
			\fill[draw=black, fill=white, opacity=0.5]
				($(v_\z) + 0*(base_1) - 0.5*(base_1) - 0.5*(base_2)$)
				--
				($(v_\z) + 0*(base_1) + 0.5*(base_1) - 0.5*(base_2)$)
				--
				($(v_\z) + 0*(base_1) + 0.5*(base_1) + 0.5*(base_2)$)
				--
				($(v_\z) + 0*(base_1) - 0.5*(base_1) + 0.5*(base_2)$)
				--
				cycle;
			\foreach \i in {1, 2, ..., 8}{
				\fill[draw=black, fill=white, opacity=0.5]
					($(v_\z) + \i*(base_1) - 0.5*(base_1) - 0.5*(base_2)$)
					--
					($(v_\z) + \i*(base_1) + 0.5*(base_1) - 0.5*(base_2)$)
					--
					($(v_\z) + \i*(base_1) + 0.5*(base_1) + 0.5*(base_2)$)
					--
					($(v_\z) + \i*(base_1) - 0.5*(base_1) + 0.5*(base_2)$)
					--
					cycle;
				\fill[draw=black, fill=white, opacity=0.5]
					($(v_\z) + \i*(base_1) + \i*(base_2) - 0.5*(base_1) - 0.5*(base_2)$)
					--
					($(v_\z) + \i*(base_1) + \i*(base_2) + 0.5*(base_1) - 0.5*(base_2)$)
					--
					($(v_\z) + \i*(base_1) + \i*(base_2) + 0.5*(base_1) + 0.5*(base_2)$)
					--
					($(v_\z) + \i*(base_1) + \i*(base_2) - 0.5*(base_1) + 0.5*(base_2)$)
					--
					cycle;
			}
		}
		\foreach \z in {4}{
			\coordinate (v_\z) at ($\z*(base_2) + 0.5*(base_2) + 2*\z*(base_1) + 0.5*(base_1)$);
			\fill[draw=black, fill=gray, opacity=0.5]
				($(v_\z) + 0*(base_1) - 0.5*(base_1) - 0.5*(base_2)$)
				--
				($(v_\z) + 0*(base_1) + 0.5*(base_1) - 0.5*(base_2)$)
				--
				($(v_\z) + 0*(base_1) + 0.5*(base_1) + 0.5*(base_2)$)
				--
				($(v_\z) + 0*(base_1) - 0.5*(base_1) + 0.5*(base_2)$)
				--
				cycle;
			\foreach \i in {1,2,3,4,5,6}{
				\fill[draw=black, fill=gray, opacity=0.5]
					($(v_\z) + \i*(base_1) - 0.5*(base_1) - 0.5*(base_2)$)
					--
					($(v_\z) + \i*(base_1) + 0.5*(base_1) - 0.5*(base_2)$)
					--
					($(v_\z) + \i*(base_1) + 0.5*(base_1) + 0.5*(base_2)$)
					--
					($(v_\z) + \i*(base_1) - 0.5*(base_1) + 0.5*(base_2)$)
					--
					cycle;
				\fill[draw=black, fill=gray, opacity=0.5]
					($(v_\z) + \i*(base_1) + \i*(base_2) - 0.5*(base_1) - 0.5*(base_2)$)
					--
					($(v_\z) + \i*(base_1) + \i*(base_2) + 0.5*(base_1) - 0.5*(base_2)$)
					--
					($(v_\z) + \i*(base_1) + \i*(base_2) + 0.5*(base_1) + 0.5*(base_2)$)
					--
					($(v_\z) + \i*(base_1) + \i*(base_2) - 0.5*(base_1) + 0.5*(base_2)$)
					--
					cycle;
			}
		}
		\foreach \z in {5}{
			\coordinate (v_\z) at ($\z*(base_2) + 0.5*(base_2) + 2*\z*(base_1) + 0.5*(base_1)$);
			\fill[draw=black, fill=white, opacity=0.5]
				($(v_\z) + 0*(base_1) - 0.5*(base_1) - 0.5*(base_2)$)
				--
				($(v_\z) + 0*(base_1) + 0.5*(base_1) - 0.5*(base_2)$)
				--
				($(v_\z) + 0*(base_1) + 0.5*(base_1) + 0.5*(base_2)$)
				--
				($(v_\z) + 0*(base_1) - 0.5*(base_1) + 0.5*(base_2)$)
				--
				cycle;
			\foreach \i in {1, 2, ..., 4}{
				\fill[draw=black, fill=white, opacity=0.5]
					($(v_\z) + \i*(base_1) - 0.5*(base_1) - 0.5*(base_2)$)
					--
					($(v_\z) + \i*(base_1) + 0.5*(base_1) - 0.5*(base_2)$)
					--
					($(v_\z) + \i*(base_1) + 0.5*(base_1) + 0.5*(base_2)$)
					--
					($(v_\z) + \i*(base_1) - 0.5*(base_1) + 0.5*(base_2)$)
					--
					cycle;
				\fill[draw=black, fill=white, opacity=0.5]
					($(v_\z) + \i*(base_1) + \i*(base_2) - 0.5*(base_1) - 0.5*(base_2)$)
					--
					($(v_\z) + \i*(base_1) + \i*(base_2) + 0.5*(base_1) - 0.5*(base_2)$)
					--
					($(v_\z) + \i*(base_1) + \i*(base_2) + 0.5*(base_1) + 0.5*(base_2)$)
					--
					($(v_\z) + \i*(base_1) + \i*(base_2) - 0.5*(base_1) + 0.5*(base_2)$)
					--
					cycle;
			}
		}
		\foreach \z in {6}{
			\coordinate (v_\z) at ($\z*(base_2) + 0.5*(base_2) + 2*\z*(base_1) + 0.5*(base_1)$);
			\fill[draw=black, fill=gray, opacity=0.5]
				($(v_\z) + 0*(base_1) - 0.5*(base_1) - 0.5*(base_2)$)
				--
				($(v_\z) + 0*(base_1) + 0.5*(base_1) - 0.5*(base_2)$)
				--
				($(v_\z) + 0*(base_1) + 0.5*(base_1) + 0.5*(base_2)$)
				--
				($(v_\z) + 0*(base_1) - 0.5*(base_1) + 0.5*(base_2)$)
				--
				cycle;
			\foreach \i in {1,2}{
				\fill[draw=black, fill=gray, opacity=0.5]
					($(v_\z) + \i*(base_1) - 0.5*(base_1) - 0.5*(base_2)$)
					--
					($(v_\z) + \i*(base_1) + 0.5*(base_1) - 0.5*(base_2)$)
					--
					($(v_\z) + \i*(base_1) + 0.5*(base_1) + 0.5*(base_2)$)
					--
					($(v_\z) + \i*(base_1) - 0.5*(base_1) + 0.5*(base_2)$)
					--
					cycle;
				\fill[draw=black, fill=gray, opacity=0.5]
					($(v_\z) + \i*(base_1) + \i*(base_2) - 0.5*(base_1) - 0.5*(base_2)$)
					--
					($(v_\z) + \i*(base_1) + \i*(base_2) + 0.5*(base_1) - 0.5*(base_2)$)
					--
					($(v_\z) + \i*(base_1) + \i*(base_2) + 0.5*(base_1) + 0.5*(base_2)$)
					--
					($(v_\z) + \i*(base_1) + \i*(base_2) - 0.5*(base_1) + 0.5*(base_2)$)
					--
					cycle;
			}
		}
		\foreach \z in {7}{
			\coordinate (v_\z) at ($\z*(base_2) + 0.5*(base_2) + 2*\z*(base_1) + 0.5*(base_1)$);
			\fill[draw=black, fill=white, opacity=0.5]
				($(v_\z) + 0*(base_1) - 0.5*(base_1) - 0.5*(base_2)$)
				--
				($(v_\z) + 0*(base_1) + 0.5*(base_1) - 0.5*(base_2)$)
				--
				($(v_\z) + 0*(base_1) + 0.5*(base_1) + 0.5*(base_2)$)
				--
				($(v_\z) + 0*(base_1) - 0.5*(base_1) + 0.5*(base_2)$)
				--
				cycle;
		}
		\node at (v_0){$1$};
		\foreach \i in {1}{
			\foreach \j in {0,\i}{
				\node at ($(v_0) + \i*(base_1) + \j*(base_2)$){$2$};
			}
		}
		\foreach \i in {2}{
			\foreach \j in {0,\i}{
				\node at ($(v_0) + \i*(base_1) + \j*(base_2)$){$4$};
			}
			\node at ($(v_0) + \i*(base_1) + 1*(base_2)$){$3$};
		}
		\foreach \i in {3}{
			\foreach \j in {0,\i}{
				\node at ($(v_0) + \i*(base_1) + \j*(base_2)$){$3$};
			}
			\foreach \j in {1,\i-1}{
				\node at ($(v_0) + \i*(base_1) + \j*(base_2)$){$7$};
			}
		}
		\foreach \i in {4}{
			\foreach \j in {0,2,\i}{
				\node at ($(v_0) + \i*(base_1) + \j*(base_2)$){$6$};
			}
			\foreach \j in {1,\i-1}{
				\node at ($(v_0) + \i*(base_1) + \j*(base_2)$){$5$};
			}
		}
		\foreach \i in {5}{
			\foreach \j in {0,\i}{
				\node at ($(v_0) + \i*(base_1) + \j*(base_2)$){$7$};
			}
			\foreach \j in {1,2,3,4}{
				\node at ($(v_0) + \i*(base_1) + \j*(base_2)$){$8$};
			}
		}
		\foreach \i in {6}{
			\foreach \j in {0,\i}{
				\node at ($(v_0) + \i*(base_1) + \j*(base_2)$){$8$};
			}
			\foreach \j in {1,\i-1}{
				\node at ($(v_0) + \i*(base_1) + \j*(base_2)$){$16$};
			}
			\foreach \j in {2,\i-2}{
				\node at ($(v_0) + \i*(base_1) + \j*(base_2)$){$10$};
			}
			\node at ($(v_0) + \i*(base_1) + 3*(base_2)$){$9$};
		}
		\foreach \i in {7}{
			\foreach \j in {0,1,\i-1,\i}{
				\node at ($(v_0) + \i*(base_1) + \j*(base_2)$){$11$};
			}
			\foreach \j in {2,\i-2}{
				\node at ($(v_0) + \i*(base_1) + \j*(base_2)$){$9$};
			}
			\foreach \j in {3,\i-3}{
				\node at ($(v_0) + \i*(base_1) + \j*(base_2)$){$12$};
			}
		}
		\foreach \i in {8}{
			\foreach \j in {0,\i}{
				\node at ($(v_0) + \i*(base_1) + \j*(base_2)$){$16$};
			}
			\foreach \j in {1,\i-1}{
				\node at ($(v_0) + \i*(base_1) + \j*(base_2)$){$9$};
			}
			\foreach \j in {2,\i-2}{
				\node at ($(v_0) + \i*(base_1) + \j*(base_2)$){$15$};
			}
			\foreach \j in {3,\i-3}{
				\node at ($(v_0) + \i*(base_1) + \j*(base_2)$){$20$};
			}
			\node at ($(v_0) + \i*(base_1) + 4*(base_2)$){$16$};
		}
		\foreach \i in {9}{
			\foreach \j in {0,\i}{
				\node at ($(v_0) + \i*(base_1) + \j*(base_2)$){$24$};
			}
			\foreach \j in {1,\i-1}{
				\node at ($(v_0) + \i*(base_1) + \j*(base_2)$){$23$};
			}
			\foreach \j in {2,\i-2}{
				\node at ($(v_0) + \i*(base_1) + \j*(base_2)$){$17$};
			}
			\foreach \j in {3,\i-3}{
				\node at ($(v_0) + \i*(base_1) + \j*(base_2)$){$19$};
			}
			\foreach \j in {4,\i-4}{
				\node at ($(v_0) + \i*(base_1) + \j*(base_2)$){$26$};
			}
		}
		\foreach \i in {10}{
			\foreach \j in {0,\i}{
				\node at ($(v_0) + \i*(base_1) + \j*(base_2)$){$5$};
			}
			\foreach \j in {1,3,\i-3,\i-1}{
				\node at ($(v_0) + \i*(base_1) + \j*(base_2)$){$18$};
			}
			\foreach \j in {2,\i-2}{
				\node at ($(v_0) + \i*(base_1) + \j*(base_2)$){$26$};
			}
			\foreach \j in {4,\i-4}{
				\node at ($(v_0) + \i*(base_1) + \j*(base_2)$){$24$};
			}
			\node at ($(v_0) + \i*(base_1) + 5*(base_2)$){$22$};
		}
		\foreach \i in {11}{
			\foreach \j in {0,\i}{
				\node at ($(v_0) + \i*(base_1) + \j*(base_2)$){$17$};
			}
			\foreach \j in {1,\i-1}{
				\node at ($(v_0) + \i*(base_1) + \j*(base_2)$){$25$};
			}
			\foreach \j in {2,4,5,\i-5,\i-4,\i-2}{
				\node at ($(v_0) + \i*(base_1) + \j*(base_2)$){$32$};
			}
			\foreach \j in {3,\i-3}{
				\node at ($(v_0) + \i*(base_1) + \j*(base_2)$){$22$};
			}
		}
		\foreach \i in {12}{
			\foreach \j in {0,\i}{
				\node at ($(v_0) + \i*(base_1) + \j*(base_2)$){$13$};
			}
			\foreach \j in {1,\i-1}{
				\node at ($(v_0) + \i*(base_1) + \j*(base_2)$){$10$};
			}
			\foreach \j in {2,\i-2}{
				\node at ($(v_0) + \i*(base_1) + \j*(base_2)$){$34$};
			}
			\foreach \j in {3,\i-3}{
				\node at ($(v_0) + \i*(base_1) + \j*(base_2)$){$29$};
			}
			\foreach \j in {4,\i-4}{
				\node at ($(v_0) + \i*(base_1) + \j*(base_2)$){$35$};
			}
			\foreach \j in {5,\i-5}{
				\node at ($(v_0) + \i*(base_1) + \j*(base_2)$){$34$};
			}
			\node at ($(v_0) + \i*(base_1) + 6*(base_2)$){$17$};
		}
		\foreach \i in {13}{
			\foreach \j in {0,\i}{
				\node at ($(v_0) + \i*(base_1) + \j*(base_2)$){$12$};
			}
			\foreach \j in {1,4,\i-4,\i-1}{
				\node at ($(v_0) + \i*(base_1) + \j*(base_2)$){$21$};
			}
			\foreach \j in {2,\i-2}{
				\node at ($(v_0) + \i*(base_1) + \j*(base_2)$){$20$};
			}
			\foreach \j in {3,\i-3}{
				\node at ($(v_0) + \i*(base_1) + \j*(base_2)$){$14$};
			}
			\foreach \j in {5,\i-5}{
				\node at ($(v_0) + \i*(base_1) + \j*(base_2)$){$38$};
			}
			\foreach \j in {6,\i-6}{
				\node at ($(v_0) + \i*(base_1) + \j*(base_2)$){$13$};
			}
		}
		\foreach \i in {14}{
			\foreach \j in {0,1,\i-1,\i}{
				\node at ($(v_0) + \i*(base_1) + \j*(base_2)$){$32$};
			}
			\foreach \j in {2,\i-2}{
				\node at ($(v_0) + \i*(base_1) + \j*(base_2)$){$14$};
			}
			\foreach \j in {3,\i-3}{
				\node at ($(v_0) + \i*(base_1) + \j*(base_2)$){$42$};
			}
			\foreach \j in {4,\i-4}{
				\node at ($(v_0) + \i*(base_1) + \j*(base_2)$){$38$};
			}
			\foreach \j in {5,\i-5}{
				\node at ($(v_0) + \i*(base_1) + \j*(base_2)$){$43$};
			}
			\foreach \j in {6,\i-6}{
				\node at ($(v_0) + \i*(base_1) + \j*(base_2)$){$33$};
			}
			\node at ($(v_0) + \i*(base_1) + 7*(base_2)$){$33$};
		}
	\end{tikzpicture}
	\end{center}
	\caption{
		Sprague-Grundy values
		of the ruler
		$\mathscr{G}\left( \ASM{16}, \mathscr{T} \right)$
	}
	\label{table:G_value-ruler}
\end{table}
\par
%
%
%
%
%
%
\subsection{The Ruler on the Poset of Set Partitions}\label{ssc:OpenSetPar}
In this subsection we consider the Grundy functions of the coin turning games on
the poset $X=\Pi_{n}$ of set partitions.
The order ideal game on $X$ is already settled 
by Corollary~\ref{cor:OrderI},
and therefore we focus on the ruler on $X$.
We show that the Grundy value of $\pi\in\Pi_{n}$
depends only on its type (defined below),
which is the integer partition given by the block sizes of $\pi$.
Moreover, we show that it suffices to determine the Grundy value for the one-line partition $(n)$.
However, 
we are unable to find a reasonable conjectural formula for the Grundy value in this case, 
which we denote by $h(n)$,
(see Table~\ref{tb:h(n)}).
\par\smallskip
An \textsl{integer partition} (or simply a \textsl{partition}) is a finite sequence
\begin{equation}\label{eq:par}
\lambda=(\lambda_{1},\lambda_{2},\dots,\lambda_{r})
\end{equation}
of non-negative integers in weakly decreasing order:
$\lambda_{1}\geq\lambda_{2}\geq\cdots\geq\lambda_{r}$.
The nonzero entries $\lambda_{i}$, in \eqref{eq:par} are called the \textsl{parts} of $\lambda$.
The number of parts 
is the length of $\lambda$, denoted by $\ell(\lambda)$, 
and the sum of the parts is the weight
of $\lambda$, denoted by $|\lambda|$.
If $|\lambda|=n$, we say that $\lambda$ is a partition of $n$.
It is sometimes convenient to use the notation
$\lambda=(i^{m_{i}(\lambda)})_{i\in[n]}=(1^{m_{1}(\lambda)}2^{m_{2}(\lambda)}\dots)$,
which indicates that exactly $m_{i}(\lambda)$ of the parts of $\lambda$ are equal to $i$.
The integer $m_{i}(\lambda)$ (or simply $m_{i}$) is called the \textsl{multiplicity} of $i$ in $\lambda$.
The set of all partitions of $n$ is
denoted by $\Par_{n}$.
%
If $\lambda=(i^{m_{i}(\lambda)})_{i\in[n]}$ and $\mu=(i^{m_{i}(\mu)})_{i\in[n]}$ are partitions, 
we define their union $\lambda\cup\mu$ to be the partition whose multiplicities are given by 
$m_{i}(\lambda\cup\mu)=m_{i}(\lambda)+m_{i}(\mu)$.
For example,
$\lambda=(2,1,1)=(1^22)$ is a partition of $4$ with length $3$,
and $\Par_{4}=\{4,13,2^2,1^22,1^4\}$.
Further if $\mu=(1^32^23)$, then $\lambda\cup\mu=(1^52^33)$.
\begin{figure}[hbt]
\adjustbox{scale=0.8,center}{%
\begin{tikzcd}
                           & 4  &      \\
 13   \arrow{ru}  &   &   2^2 \arrow{lu}                       \\
   & 1^22  \arrow{lu}\arrow{ru}  &  \\
    & 1^4 \arrow{u}   & \\
\end{tikzcd}
}
\caption{$\Par_{4}$\label{fig:Par4}}
\end{figure}
\par
Let $\lambda$ and $\mu$ be partitions of $n$. 
We say that $\mu$ \textsl{refines} $\lambda$,
denoted by $\mu\leq\lambda$, 
if each part of $\lambda$ can be written as a sum of some parts of $\mu$,
with every part of $\mu$ used exactly once.
Equivalently, writing $\lambda=(\lambda_{1},\dots,\lambda_{r})$, 
there exists an $r$-tuple $\bnu=(\nu^{(1)},\dots,\nu^{(r)})$ of partitions such that $|\nu^{(i)}|=\lambda_{i}$ for each $i$, 
and $\mu=\nu^{(1)}\cup\cdots\cup\nu^{(r)}$.
This relation defines the \textsl{refinement order} on $\Par_{n}$.
For example, $(5,5,1)$ refines $(6,5)$, but does not refine $(7,4)$.
The Hasse diagram of the poset $\Par_{4}$ is depicted in Figure~\ref{fig:Par4}.
\par\smallskip
Here we regard the tuple $\bnu$ as a multiset of partitions, 
ignoring the order of the components; 
that is, tuples such as $(1^2,2,2)$ and $(2,1^2,2)$ are identified.
Note that the choice of $\bnu$ is not unique.
Let $\Par_{n}(\lambda,\mu)$ denote the set of all such multisets $\bnu$ of partitions.
For example, 
$\Par_{n}(\lambda,\mu)=\{(2^2,1^2),(1^22,2)\}$ for $\lambda=(4,2)$ and $\mu=(2,2,1,1)=(1^22^2)$.
\par\smallskip
For each $\pi=\{\pi_{1},\pi_{2},\dots,\pi_{k}\}\in\Pi_{n}$, 
we associate the integer partition $\lambda=(|\pi_{1}|,|\pi_{2}|,\dots,|\pi_{k}|)\in\Par_{n}$,
 arranged in weakly decreasing order.
Here $|B|$ denotes the number of elements in the block $B$.
This map $\fnb:\Pi_{n}\to\Par_{n},\,\pi \mapsto \lambda$ is order-preserving.
The integer partition $\lambda=\fnb(\pi)$ is called the \textsl{type} of the set partition $\pi$.
For $\pi\in\Pi_{n}$, we henceforth write $g_{n}(\pi)$ for the Grundy value $g_{\Pi_{n},\I}(\pi)$.
%
%
%
\begin{thm}\label{th:SetParGrundy}
Let $X=\Pi_{n}$ be the poset of set partitions of $[n]$.
We consider the ruler $\Pgame(X,\I)$.
\begin{enumerate}[label=(\roman*)]
\item\label{it:Pi-1}
For $\pi\in X$, the Grundy value $g_{n}(\pi)$ depends only on its type $\lambda=\fnb(\pi)$.
Accordingly, we may write $g_{n}(\lambda)$ for $g_{n}(\pi)$.
\item\label{it:Pi-2}
If $\lambda=(\lambda_{1},\lambda_{2},\dots,\lambda_{r})$ then
$g_{n}(\lambda)=g_{\lambda_{1}}(\lambda_{1})\nimmul g_{\lambda_{2}}(\lambda_{2})\nimmul\cdots \nimmul g_{\lambda_{r}}(\lambda_{r})$.
\end{enumerate}
\end{thm}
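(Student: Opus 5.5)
The plan rests on the structure of lower intervals in $\Pi_{n}$. For $\pi=\{\pi_{1},\dots,\pi_{r}\}\in\Pi_{n}$, the lower interval $\Lambda_{\pi}=[\hat0,\pi]$ is order isomorphic to the direct product $\Pi(\pi_{1})\times\cdots\times\Pi(\pi_{r})$. The isomorphism sends $\sigma\leq\pi$ to the tuple of restrictions $(\sigma|_{\pi_{1}},\dots,\sigma|_{\pi_{r}})$. Moreover $\Pi(\pi_{i})\cong\Pi_{|\pi_{i}|}$ by relabelling.

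The first step is to reduce $g_{n}(\pi)$ to a computation inside $\Lambda_{\pi}$. By \eqref{eq:FT_of_T_game_1}, $g_{X,\I}(\pi)$ only involves turning sets $[\alpha,\pi]$ with $\alpha\leq\pi$. Those sets, and recursively all values $g(t)$ for $t\leq\pi$, only involve intervals contained in $\Lambda_{\pi}$. Hence $g_{\Pi_{n},\I}(\pi)=g_{\Lambda_{\pi},\I'}(\pi)$, where $\I'$ is the family of intervals of $\Lambda_{\pi}$. This is an easy induction over the linear extension, and it is the same restriction argument used in Lemma~\ref{lem:g_value_isomorphic}.

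Next I transport along the isomorphism $f:\Lambda_{\pi}\to\prod_{i}\Pi_{|\pi_{i}|}$. An order isomorphism maps intervals onto intervals, so condition \eqref{eq:iso-T} holds. Proposition~\ref{prop:g_value_order_isomorphism_general} then gives $g(\pi)=g_{\prod\Pi_{\lambda_{i}}}(\hat1,\dots,\hat1)$ for the ruler on the product. The key observation is that in a direct product of posets, every interval $[(a_{i}),(b_{i})]$ equals $\prod_{i}[a_{i},b_{i}]$. So the interval family of the product is exactly the product family $\{T_{1}\times\cdots\times T_{r}\}$ of Theorem~\ref{th:nim-prod-app}. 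Iterating that theorem (Nim multiplication is associative and commutative, see the Appendix) yields
\[
g_{n}(\pi)=g_{\Pi_{\lambda_{1}},\I}(\hat1)\nimmul\cdots\nimmul g_{\Pi_{\lambda_{r}},\I}(\hat1),
\]
where $\lambda_{i}=|\pi_{i}|$. Both \ref{it:Pi-1} and \ref{it:Pi-2} follow at once. The right-hand side depends only on the multiset of block sizes, which is the type $\fnb(\pi)$. Each factor is $g_{\lambda_{i}}((\lambda_{i}))$, the value of the one-block partition, so the formula in \ref{it:Pi-2} holds as stated.

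I expect the main obstacle to be the careful verification of the restriction step and of the product isomorphism. For the restriction step, one must check that the Grundy value of $\pi$ in the whole board equals its value in the subboard $\Lambda_{\pi}$. For the product isomorphism, one must check that $\sigma\mapsto(\sigma|_{\pi_{i}})$ is a bijection onto the product, with inverse taking the union of the blocks, and that it preserves order in both directions. Both checks are routine once stated.
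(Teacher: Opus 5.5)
Your proof is correct and, for part (ii), follows the paper's route. Both identify $[\hat0,\pi]$ with $\Pi_{\lambda_{1}}\times\cdots\times\Pi_{\lambda_{r}}$ and apply Theorem~\ref{th:nim-prod-app}. You also usefully spell out two steps the paper leaves implicit: the restriction of the game to $\Lambda_{\pi}$, and the fact that intervals of a product are exactly products of intervals. The only real difference is in part (i). You obtain it as a corollary of the product formula via commutativity of $\nimmul$, whereas the paper proves it independently by applying Proposition~\ref{prop:g_value_order_isomorphism_general} to the automorphism of $\Pi_{n}$ induced by a permutation of $[n]$ carrying the blocks of $\pi$ onto those of $\sigma$.
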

\begin{demo}{Proof}
Note that $X$ has the minimum element $\hat0=\{1,2,\dots,n\}$.
Let $\pi=\{\pi_{1},\pi_{2},\dots\}\in X$
be a set partition of $[n]$ of type $\lambda$.
We may label the blocks so that $\pi_{1}=\{i_{1},\dots,i_{\lambda_{1}}\}$,
$\pi_{2}=\{i_{\lambda_{1}+1},\dots,i_{\lambda_{1}+\lambda_{2}}\},\dots$.
\begin{enumerate}[label=(\roman*)]
\item
Let $\sigma=\{\sigma_{1},\sigma_{2},\dots\}\in X$ be another set partition of $[n]$ of the same type $\lambda$.
We label the blocks of $\sigma$ so that 
$\sigma_{1}=\{j_{1},\dots,j_{\lambda_{1}}\}$,
$\sigma_{2}=\{j_{\lambda_{1}+1},\dots,j_{\lambda_{1}+\lambda_{2}}\}\dots$.
Define a bijection $f:[n]\to[n]$ by $f(i_{1})=j_{1},\dots,f(i_{\lambda_{1}})=j_{\lambda_{1}},\dots$.
Then $f$ induces an order automorphism of $\Pi_{n}$ such that $[\hat0,\pi]\simeq[\hat0,\sigma]$ 
and this automorphism satisfies condition~\eqref{eq:iso-T} with $\T_{1}=\T_{2}=\I$.
Hence, by Proposition~\ref{prop:g_value_order_isomorphism_general},
we obtain $g_{n}(\pi)=g_{n}(\sigma)$.
\item
Since $[\hat0,\pi]\simeq\Pi_{\lambda_{1}}(\{\{i_{1},\dots,i_{\lambda_{1}}\})\times\Pi_{\lambda_{2}}(\{\{i_{\lambda_{1}+1},\dots,i_{\lambda_{1}+\lambda_{2}}\})\times\cdots$,
Theorem~\ref{th:nim-prod-app} implies our claim.
\end{enumerate}
\end{demo}
By Theorem~\ref{th:SetParGrundy},
it suffices to compute $g_{n}((n))$ for each nonnegative integer $n$ 
in order to determine the Grundy value $g_{n}(\pi)$ for any $\pi\in\Pi_{n}$.
Henceforth, we write $h(n)$ for $g_{n}((n))$.
Table~\ref{tb:h(n)} lists the first few values of $h(n)$.
At present, we are not aware of any simple explicit formula or conjectural pattern for $h(n)$.
\begin{table}[h]
 \begin{center}
	\begin{tabular}{|l||c|c|c|c|c|c|c|c|c|c|c|c|c|c|c|c|c|c|c|c|c|} 
	\hline
$n$     &   1 &  2 &  3 &  4 &  5 &  6 &  7 &  8 &  9 & 10 & 11 & 12 & 13 & 14 & 15 & 16 & 17 
\\\hline
$h(n)$ &   1 &  2 &  1 &  4 &  1 &  2 &  1 &  7 & 15 & 16 &  8 &  5 & 19 &  5 & 37 & 17 & 14 
\\\hline
	\end{tabular}
   \caption{Grundy values $h(n)$}\label{tb:h(n)}
 \end{center}
\end{table}
\par
%
%
%
\begin{prop}\label{lem:M(lambda,mu)}
Let $\lambda=(i^{m_{i}(\lambda)})_{i\in[n]}$
and $\mu=(i^{m_{i}(\mu)})_{i\in[n]}$
be partitions of $n$ such that $\lambda\geq\mu$.
%
Let $\{\alpha^{(1)},\dots,\alpha^{(s)}\}$ be the set of all distinct partitions that appear as components of some $\bnu\in\Par_{n}(\lambda,\mu)$.
Let $m_{\alpha^{(k)}}(\bnu)$ denote the multiplicity of $\alpha^{(k)}$ in $\bnu\in\Par_{n}(\lambda,\mu)$.
If $\pi\in\Pi_{n}$ be a set partition of $[n]$ of type $\mu$,
then the number of set partitions $\tau\in\Pi_{n}$ of type $\lambda$ satisfying $\tau\geq\pi$
 is given by
\begin{equation}\label{eq:M(lambda,mu)}
M_{n}(\lambda,\mu)=\sum_{\nu\in\Par_{n}(\lambda,\mu)}
\frac{\prod_{i=1}^{n}m_{i}(\mu)!}
{\prod_{j=1}^{\ell(\lambda)}\prod_{i=1}^{n}m_{i}(\nu^{(j)})!\prod_{k=1}^{s}m_{\alpha^{(k)}}(\bnu)!}.
\end{equation}
\end{prop}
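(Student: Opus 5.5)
We count coarsenings $\tau\geq\pi$ of type $\lambda$ by recording how the blocks of $\pi$ are grouped. Fix $\pi\in\Pi_{n}$ of type $\mu$. A set partition $\tau\geq\pi$ is the same thing as a set partition of the set of blocks of $\pi$: each block of $\tau$ is a union of blocks of $\pi$. Thus $\tau$ is determined by a partition of the $\ell(\mu)$ blocks of $\pi$ into groups $G_{1},\dots,G_{\ell(\lambda)}$. The type condition says that the total sizes of the groups, arranged in decreasing order, give $\lambda$. For such a grouping, let $\nu^{(j)}$ be the partition formed by the sizes of the blocks of $\pi$ in the group of total size $\lambda_{j}$ (ties among equal $\lambda_{j}$ are resolved arbitrarily, since $\bnu$ is a multiset). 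Then $\bnu=(\nu^{(1)},\dots,\nu^{(\ell(\lambda))})\in\Par_{n}(\lambda,\mu)$, because $\nu^{(1)}\cup\cdots\cup\nu^{(\ell(\lambda))}=\mu$ and $|\nu^{(j)}|=\lambda_{j}$. So the set of such $\tau$ splits as a disjoint union over $\bnu\in\Par_{n}(\lambda,\mu)$, and it suffices to count the $\tau$ with a given $\bnu$.

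Fix $\bnu$. Blocks of $\pi$ of equal size are interchangeable for this count, so we count ways to distribute, for each $i$, the $m_{i}(\mu)$ distinguishable blocks of size $i$ among the groups, with group $j$ receiving exactly $m_{i}(\nu^{(j)})$ of them. If the groups are labelled $1,\dots,\ell(\lambda)$, the count is the multinomial product
\[
\prod_{i=1}^{n}\frac{m_{i}(\mu)!}{\prod_{j=1}^{\ell(\lambda)}m_{i}(\nu^{(j)})!},
\]
which is valid because $\sum_{j}m_{i}(\nu^{(j)})=m_{i}(\mu)$.

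The groups, however, are unlabelled: $\tau$ is a set of blocks. Two labelled assignments give the same $\tau$ exactly when they differ by permuting labels $j$ that carry identical component partitions $\nu^{(j)}$. Groups with different $\nu^{(j)}$ are distinguished by their block-size content, so they cannot be swapped. The label permutations preserving $\bnu$ form the group $\prod_{k=1}^{s}\mathfrak{S}_{m_{\alpha^{(k)}}(\bnu)}$. This group acts freely on labelled assignments, because the groups are disjoint and nonempty, so a nontrivial relabelling of two groups with the same content genuinely changes the labelled assignment. Dividing by the orbit size $\prod_{k}m_{\alpha^{(k)}}(\bnu)!$ and summing over $\bnu$ gives \eqref{eq:M(lambda,mu)}.

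The main obstacle is the bookkeeping between multisets and labelled tuples. One must check that the multiset $\bnu$ is well defined from $\tau$ even when some $\lambda_{j}$ coincide, and that the symmetry factor involves only coincidences among entire component partitions $\nu^{(j)}$, not merely among the $\lambda_{j}$. Both follow from the free-action argument above. A check on the example $\lambda=(4,2)$, $\mu=(1^{2}2^{2})$ confirms the formula. The component $(2^{2},1^{2})$ contributes $2!\cdot2!/(2!\cdot2!)=1$. The component $(1^{2}2,2)$ contributes $2!\cdot2!/(2!\cdot1!\cdot1!)=2$. The total $3$ matches a direct count.
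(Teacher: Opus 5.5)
Your proof is correct. The paper gives no proof of this proposition; it calls it elementary and leaves it to the reader. Your argument is exactly the count that the shape of \eqref{eq:M(lambda,mu)} encodes: label the groups and count assignments by a product of multinomial coefficients over block sizes, then divide by the free action of the stabilizer $\prod_{k}\mathfrak{S}_{m_{\alpha^{(k)}}(\bnu)}$. You also correctly take the symmetry factor from coincident components $\nu^{(j)}$ rather than from coincident parts $\lambda_j$.
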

The proof of this proposition is elementary and is left to the reader.
At the end of this subsection, 
we explain how to compute $h(n)$ inductively using Theorem~\ref{th:SetParGrundy} and Table~\ref{tb:h(n)}.
By Theorem~\ref{thm:FT_of_T_game}, we have
$h(n)=g_{n}(\{[n]\})=\mex\{s_{n}(\pi)\mid\pi\in\Pi_{n}\}$,
where
\begin{equation}\label{eq:sn(pi)}
s_{n}(\pi):=\underset{{\tau\in\Pi_{n}}\atop{\tau\geq\pi}}{\,\,\nimsum}g_{n}(\tau).
\end{equation}
Note that the NIM-sum $s_{n}(\pi)$
depends only on the type $\mu=\fnb(\pi)\in\Par_{n}$;
therefore, we denote this value by $s_{n}(\mu)$.
By Theorem~\ref{th:SetParGrundy} \ref{it:Pi-1}, 
the Grundy value $g_{n}(\tau)$ depends only on the type $\lambda=\fnb(\tau)$.
Moreover, by Theorem~\ref{th:SetParGrundy} \ref{it:Pi-2}, 
this value $g_{n}(\lambda)$ can be computed inductively as the NIM-product of its components.
Furthermore, the number of set partitions $\tau$ of type $\lambda$ such that $\tau \geq \pi$ 
is given by $M_{n}(\lambda,\mu)$
(see Proposition~\ref{lem:M(lambda,mu)} \eqref{eq:M(lambda,mu)}).
%
Hence, \eqref{eq:sn(pi)} can be written as
\begin{equation}\label{eq:sn(mu)}
s_{n}(\mu)=\underset{{\lambda\in\Par_{n}}\atop{\lambda\geq\mu}}{\,\,\nimsum}M_{n}(\lambda,\mu) g_{n}(\lambda).
\end{equation}
We then use the formula $h(n)=\mex\{s_{n}(\mu)|\mu\in\Par_{n}\}$ to determine $h(n)$.
\par
For example, we illustrate the case $n=4$.
The poset $\Par_{4}$ of types is depicted in Figure~\ref{fig:Par4}.
The Grundy values $g_{4}(\lambda)$,
computed via the NIM-product for each $\lambda\in\Par_{4}\setminus\{4\}$, 
are shown in Figure~\ref{fig:Grundy-Pi4},
and the computed values of $s_{4}(\mu)$ are listed in Figure~\ref{fig:s6(mu)}.
Taking the minimum excluded value of these numbers, we obtain
$\mex\{s_{4}(\mu)\mid \mu\in\Par_{4}\}=4$, and hence $h(4)=4$.
Below, we present an explicit example illustrating how the values $s_{4}(\mu)$ are computed.
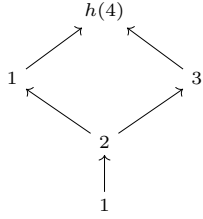
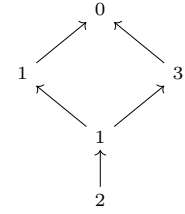
\begin{figure}[htbp]
{\tiny
\centering
\begin{subfigure}[b]{0.4\textwidth}
    \centering
\begin{tikzcd}
           &    h(4)  &   \\
 1   \arrow{ru}    &   &   3 \arrow{lu}   \\
   &  2 \arrow{lu}\arrow{ru} &     \\
  & 1 \arrow{u} & \\
\end{tikzcd}
    \caption{
The value of $g_{4}(\lambda)$}
    \label{fig:Grundy-Pi4}
\end{subfigure}
}
\hfill
{\tiny
\begin{subfigure}[b]{0.4\textwidth}
    \centering
\begin{tikzcd}
   &  0       & \\
 1   \arrow{ru}  &         &   3 \arrow{lu}       \\
   & 1 \arrow{lu}\arrow{ru}     &  \\
   & 2 \arrow{u} &  \\
\end{tikzcd}
    \caption{The value of $s_{4}(\mu)$}
    \label{fig:s6(mu)}
\end{subfigure}
}
\caption{The values of $g_{4}(\lambda)$ and $s_{4}(\mu)$ for $\Par_{4}$}
\label{fig:s6Par}
\end{figure}
Let $I_{\mu}$ denote the interval $[\mu,6]$ in $\Par_{6}$ for $\mu\in\Par_{6}$.
First observe that 
if $\mu=(4)$, then the nim-sum defining $s_{4}(4)$ is empty, and hence $s_{4}(4)=0$.
As an illustrative example, take $\mu=(1^22)$.
Then we have $I_{\mu}\setminus\{(4)\}=\{1^22,13,2^2\}$.
For each $\lambda\in I_{\mu}\setminus\{(4)\}$, we compute $\Par_{4}(\lambda,\mu)$ 
and the corresponding multiplicity $M(\lambda,\mu)$.
\begin{enumerate}[label=\alph*)]
\item
If $\lambda=1^22$, then $\Par_{4}(\lambda,\mu)=\{(1,1,2)\}$ and $M(\lambda,\mu)=\frac{2!1!}{1!1!1!2!1!}=1$.
\item
If $\lambda=13$, then $\Par_{6}(\lambda,\mu)=\{(1,12)\}$ and $M(\lambda,\mu)=\frac{2!1!}{1!1!1!1!1!}=2$.
\item
If $\lambda=2^2$, then $\Par_{6}(\lambda,\mu)=\{(1^2,2)\}$ and $M(\lambda,\mu)=\frac{2!1!}{2!1!1!1!}=1$.
\end{enumerate}
Consequently, we obtain
\[
s_{4}(1^22)=1\cdot2\nimadd 2\cdot1\nimadd 1\cdot3 =1.
\]
The remaining values of $s_{4}(\mu)$ in Figure~\ref{fig:s6(mu)} are computed in the same manner.
Hence, we conclude that $h(4)=4$.
\par
Of course, the Grundy value $g_{4}(4)$ can be computed directly from the Hasse diagram of $\Pi_{4}$
(see Figure~\ref{fig:Pi4}).
However, as $n$ increases, constructing the increasingly complicated Hasse diagram of $\Pi_{n}$ becomes time-consuming.
%
%
%
%

%
%
%
%
\appendix
%
\section{NIM-addition and NIM-multiplication}\label{sc:appendix}
First, we recall elementary facts on the minimal-excluded number in the following proposition.
%
\begin{prop}\label{prop:NIM_subset}
	Let $S$ and $T$ be proper subsets of $\Non$
	such that $S$ is a subset of $T$
	(i.e., $S \subset T \subsetneq \Non$).
	Then the followings hold:
	\begin{enumerate}[label=(\roman*)] 
		\item
			$\mex(S) \leq \mex(T)$,
		\item
			$\mex(S) = \mex(T)$
			if $\mex(S) \notin T$.
	\end{enumerate} 
\end{prop}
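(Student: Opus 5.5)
The plan is to argue directly from the definition $\mex(U)=\min(\Non\setminus U)$, translating both claims into statements about the complements $\Non\setminus S$ and $\Non\setminus T$. These complements are nonempty because $S$ and $T$ are proper subsets of $\Non$, so both minima exist. The inclusion $S\subset T$ reverses to $\Non\setminus T\subseteq\Non\setminus S$, and the rest is elementary comparison of minima.

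For (i), I will use that the minimum over a larger nonempty subset of $\Non$ is at most the minimum over a smaller nonempty subset. Concretely, $\mex(T)=\min(\Non\setminus T)$ is an element of $\Non\setminus T$, hence an element of $\Non\setminus S$. Therefore it is bounded below by $\min(\Non\setminus S)=\mex(S)$, which gives $\mex(S)\leq\mex(T)$.

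For (ii), suppose $\mex(S)\notin T$. Then $\mex(S)\in\Non\setminus T$, so by the definition of minimum $\mex(T)=\min(\Non\setminus T)\leq\mex(S)$. Combining this with (i) yields $\mex(S)=\mex(T)$.

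I do not expect a genuine obstacle here. The only point needing care is that both complements are nonempty, so that the minima are well defined; this is exactly the hypothesis $S\subset T\subsetneq\Non$.
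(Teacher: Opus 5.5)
Your proof is correct: since $\mex(T)\in\Non\setminus T\subseteq\Non\setminus S$, you get (i). Since $\mex(S)\in\Non\setminus T$, you get the reverse inequality needed for (ii). The paper gives no proof of this proposition and simply recalls it as an elementary fact, so your complement-based argument is exactly the standard check it takes for granted.
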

This proposition helps to prove the following proposition for the NIM-addition.
\begin{prop}\label{prop:FT_NIM_add}
	Let $a$, $b$ and $c$ be non-negative integers.
	The NIM-addition has the following properties:
	\begin{enumerate}[label=(\Roman*)] 
		\item\label{property:injectivity_of_NIM_add}
			$a \nimadd b \neq a \nimadd c$ if $b \neq c$,
		\item	commutative (i.e., $a \nimadd b = b \nimadd a$),
		\item	$a \nimadd 0 = 0 \nimadd a = a$,
		\item	$a \nimadd a = 0$,
		\item	associative (i.e., $(a \nimadd b) \nimadd c = a \nimadd (b \nimadd c)$).
	\end{enumerate} 
\end{prop}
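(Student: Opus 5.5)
We prove the five properties of Proposition~\ref{prop:FT_NIM_add} by strong induction on the arguments, working directly from the recursive definition \eqref{eq:nimadd}, with Proposition~\ref{prop:NIM_subset} as the only external tool.

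\emph{Property (I).} This follows at once from the definition. Suppose $b\neq c$, say $c<b$. Then $a\nimadd c$ belongs to the set $\{a\nimadd b'\mid 0\le b'<b\}$ whose $\mex$ is $a\nimadd b$. Hence $a\nimadd c\neq a\nimadd b$.

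\emph{Property (II), commutativity.} The defining set in \eqref{eq:nimadd} for $(a,b)$ is obtained from the one for $(b,a)$ by swapping the two unions. So induction on $a+b$ gives $a\nimadd b=b\nimadd a$.

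\emph{Property (III).} By induction on $a$, the set defining $a\nimadd 0$ is $\{a'\nimadd 0\mid a'<a\}=\{0,\dots,a-1\}$. Its $\mex$ is $a$, and commutativity gives $0\nimadd a=a$.

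\emph{Property (IV).} By induction on $a$, the defining set for $a\nimadd a$ consists of the values $a'\nimadd a$ and $a\nimadd a'$ with $a'<a$. None of these is $0$: if $a'\nimadd a=0=a'\nimadd a'$, then property (I) forces $a=a'$, a contradiction. Hence the $\mex$ is $0$.

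\emph{Property (V), associativity.} This is the main obstacle. I would first establish the characterization that $c=a\nimadd b$ satisfies two conditions:
\begin{itemize}
\item[(a)] every $c'<c$ equals $a'\nimadd b$ for some $a'<a$, or $a\nimadd b'$ for some $b'<b$;
\item[(b)] no $a'\nimadd b$ with $a'<a$ and no $a\nimadd b'$ with $b'<b$ equals $c$.
\end{itemize}
Both are immediate from the definition of $\mex$.

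Next I would prove by induction on $a+b+c$ that $(a\nimadd b)\nimadd c=\mex$ of the set $S$ of all values
\[
(a'\nimadd b)\nimadd c,\qquad (a\nimadd b')\nimadd c,\qquad (a\nimadd b)\nimadd c'
\]
with $a'<a$, $b'<b$, $c'<c$. Write $d=a\nimadd b$. By definition, $d\nimadd c$ is the $\mex$ of $\{d'\nimadd c\mid d'<d\}\cup\{d\nimadd c'\mid c'<c\}$. By (a), each $d'<d$ has the form $a'\nimadd b$ or $a\nimadd b'$, so this set is contained in $S$. Conversely, by (b) each $a'\nimadd b$ and each $a\nimadd b'$ is either $<d$ or $>d$.
\begin{itemize}
\item If it is $<d$, the corresponding element of $S$ already lies in the defining set.
\item If it is $>d$, then the element of $S$ is of the form $e\nimadd c$ with $e\neq d$, and property (I) shows it differs from $d\nimadd c$.
\end{itemize}
So $S$ contains the defining set and avoids $d\nimadd c$. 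Proposition~\ref{prop:NIM_subset}(ii) then gives $\mex(S)=d\nimadd c$.

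The same argument applied to $a\nimadd(b\nimadd c)$ yields the $\mex$ of the symmetric set
\[
\{a'\nimadd(b\nimadd c),\ a\nimadd(b'\nimadd c),\ a\nimadd(b\nimadd c')\}.
\]
By the induction hypothesis each element here equals the corresponding element of $S$, since the arguments have smaller total sum. The two $\mex$ values therefore coincide, proving associativity.

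The delicate point is the use of Proposition~\ref{prop:NIM_subset}(ii) to enlarge the defining set without changing its $\mex$. This requires checking, via injectivity (I), that $d\nimadd c\notin S$, which I would carry out carefully.
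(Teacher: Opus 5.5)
Your proof is correct and takes essentially the route the paper indicates: the paper leaves this proposition to the reader, but names Proposition~\ref{prop:NIM_subset} as the tool. Your associativity step uses exactly that tool, enlarging the defining set of $d\nimadd c$ to the three-family set $S$ via part (ii) and checking $d\nimadd c\notin S$ by injectivity. The only implicit point is that in (IV) and (V) you use injectivity in the first argument, which follows from (I) together with the commutativity (II) you have already established.
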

\begin{demo}{Proof}
%
The proof is straightforward and left to the reader.
\end{demo}
%
%
\begin{lem}
For any nonnengative integers $a,b,c\in\Non$ such that $c<a\oplus b$,
there exists $a'<a$ with $a'\oplus b=c$ or $b'<b$ with $a\oplus b'=c$.
\end{lem}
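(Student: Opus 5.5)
The plan is to work entirely with binary digits, since $\oplus$ is bitwise XOR. Set $x := a\oplus b$ and let $c<x$. Look at the leading position where $c$ and $x$ disagree, namely $j:=\placemax{x\oplus c}$. Because $c<x$, we must have $\digit{j}{x}=1$ and $\digit{j}{c}=0$, and $c$ and $x$ coincide in every digit above position $j$. Next set $d:=x\oplus c$. This number has its highest nonzero digit at position $j$, so $\placemax{d}=j$.

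From $\digit{j}{x}=\digit{j}{a}\oplus\digit{j}{b}=1$, exactly one of $a,b$ has a $1$ in position $j$. By the symmetry of the statement (commutativity of $\oplus$), I may assume $\digit{j}{a}=1$. Then I define $a':=a\oplus d$. Above position $j$, $d$ is zero, so $a'$ copies $a$ there. At position $j$ the digit of $a$ changes from $1$ to $0$. Hence $a'<a$, because the first place where they differ is $j$ and there $a$ has the larger digit. Finally I compute, using the fact that $(\Non,\oplus)$ is an abelian group of exponent $2$:
\[
a'\oplus b=(a\oplus b)\oplus d=x\oplus x\oplus c=c .
\]
In the other case, where $\digit{j}{b}=1$, the same construction gives $b':=b\oplus d<b$ with $a\oplus b'=c$.

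All the steps are routine. The one point needing care is the comparison of binary numbers: I will state explicitly that, for $u\neq v$, we have $u<v$ if and only if $v$ has digit $1$ (and $u$ digit $0$) at the largest index where their binary expansions differ. This fact is applied twice, first to extract $j$ from $c<x$, and then to conclude $a'<a$.
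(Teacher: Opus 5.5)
Your proof is correct and is essentially the paper's argument: your $a\oplus d$ and $b\oplus d$ are exactly the paper's candidates $a'=b\oplus c$ and $b'=a\oplus c$, and your pivot $j=\placemax{x\oplus c}$ coincides with the paper's $i_{0}$, the top position where $b\oplus c$ and $a$ differ (since $(b\oplus c)\oplus a=x\oplus c$). The only difference is that the paper splits cases on whether $b\oplus c>a$ and must derive $\digit{j}{a\oplus b}=1$ by contradiction, whereas choosing $j$ directly from $x$ and $c$ gives it immediately, so your version is slightly more streamlined.
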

\begin{demo}{Proof}
We claim that, for any integer $c\in\Non$ such that $0\leq c<a\oplus b$,
exactly one of $b\oplus c< a$ or $a\oplus c < b$ holds.
Then we may put $a'=b\oplus c$ or $b'=a\oplus c$ to obtain $a'$ or $b'$ satisfying this lemma.
%
%
%
\par\smallskip
We follow the notation in Definition~\ref{def:binary}.
We don't need the second case here.
\begin{enumerate}[label=(\roman*)] 
\item
Assume $b\oplus c>a$.
If we set $i_{0}:=\max\{i\,|\,\digit{i}{b\oplus c}\neq\digit{i}{a}\}$,
then we must have $\digit{i_{0}}{b\oplus c}=1$ and $\digit{i_{0}}{a}=0$.
\par
If $\digit{i_{0}}{a\oplus b}=0$ could hold,
then we would have $\digit{i_{0}}{b}=\digit{i_{0}}{a}=0$ and $\digit{i_{0}}{c}=\digit{i_{0}}{b\oplus(b\oplus c)}=1$.
From the assumption we also would have $\digit{i}{b\oplus c}=\digit{i}{a}$ for $i>i_{0}$,
which implies $\digit{i}{a\oplus b}=\digit{i}{(b\oplus c)\oplus b}=\digit{i}{c}$.
Further $\digit{i_{0}}{a\oplus b}=0$ and $\digit{i_{0}}{c}=1$ implies $a\oplus b<c$ which is a contradiction.
Hence we coclude that $\digit{i_{0}}{a\oplus b}=1$.
\par
Thus, in this case, we have 
$\digit{i}{a\oplus c}=\digit{i}{(a\oplus b)\oplus(b\oplus c)}=\digit{i}{(a\oplus b)\oplus a}=\digit{i}{b}$
for $i>i_{0}$,
and, further, we have
$\digit{i_{0}}{a\oplus c}=\digit{i_{0}}{(a\oplus b)\oplus(b\oplus c)}=0$ and 
$\digit{i_{0}}{b}=\digit{i_{0}}{(a\oplus b)\oplus a}=1$.
Hence we coclude that $a\oplus c<b$.
\item
In the case 
$b\oplus c<a$, we can show that $a\oplus c>b$ similarly.
\end{enumerate} 
This completes the proof.
\end{demo}
%
%
%
\begin{thm}\label{thm:nimaddd=oplus}
If $a,b\in\Non$, then we have
\[
a\nimadd b = a \oplus b.
\]
\end{thm}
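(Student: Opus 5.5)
We prove $a\nimadd b=a\oplus b$ by strong induction on $a+b$, checking that $a\oplus b$ satisfies the defining recursion \eqref{eq:nimadd}. The case $a=b=0$ is immediate: both sets in \eqref{eq:nimadd} are empty, so $0\nimadd0=\mex(\emptyset)=0=0\oplus0$.

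For the inductive step, fix $a,b$ and assume $a'\nimadd b=a'\oplus b$ for all $a'<a$, and $a\nimadd b'=a\oplus b'$ for all $b'<b$. Then
\[
a\nimadd b=\mex\bigl(\{a'\oplus b\mid a'<a\}\cup\{a\oplus b'\mid b'<b\}\bigr),
\]
and it suffices to show that this set $S$ does not contain $a\oplus b$ but contains every $c<a\oplus b$.

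The first claim uses only that $(\Non,\oplus)$ is a group. If $a'\oplus b=a\oplus b$, cancelling $b$ gives $a'=a$, contradicting $a'<a$. Symmetrically, $a\oplus b'=a\oplus b$ forces $b'=b$, contradicting $b'<b$. Hence $a\oplus b\notin S$.

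The second claim is exactly the lemma just proved. For any $c<a\oplus b$ it supplies either $a'<a$ with $a'\oplus b=c$ or $b'<b$ with $a\oplus b'=c$, so $c\in S$.

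Therefore $\mex(S)=a\oplus b$, which closes the induction. No step here is a real obstacle: the only substantive ingredient, the digit-comparison argument producing $a'$ or $b'$, is already contained in the preceding lemma. The remaining point to state explicitly is that the induction hypothesis covers all pairs $(a',b)$ and $(a,b')$ appearing in \eqref{eq:nimadd}, which holds because each has strictly smaller coordinate sum than $(a,b)$.
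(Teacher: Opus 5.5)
Your proposal is correct and matches the paper's proof: strong induction on $a+b$, showing $a\oplus b\notin S$ by cancellation in $(\Non,\oplus)$ and $\{c\mid c<a\oplus b\}\subseteq S$ by the preceding lemma, so that $\mex(S)=a\oplus b$. Your explicit remark that every pair $(a',b)$ and $(a,b')$ has strictly smaller coordinate sum is a justification the paper leaves implicit.
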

\begin{demo}{Proof}
For $a'\neq a$, $b'\neq b$ we certainly have $a'\oplus b\neq a\oplus b \neq a\oplus b'$.
We proceed by induction on $a+b$.
If $a+b=0$ then we have $a=b=0$,
hence $0\nimadd 0=0$ and $0\oplus 0$ implies the both sides coincide.
For $a+b>0$, by induction hyperthesis, we have
\begin{align*}
a\nimadd b
=\mex(\{ a'\nimadd b,\, a\nimadd b' \,|\, a'<a,\, b'<b \} )
=\mex\left(\left\{a'\oplus b,\, a\oplus b' \,\middle|\, a'<a,\, b'<b\right\} \right).
\end{align*}
If we set $S:=\left\{a'\oplus b,\, a\oplus b' \,\middle|\, a'<a,\, b'<b\right\}$,
then $\{c\in\Non \,|\, c<a\oplus b\}\subseteq S$ and $a\oplus b\not\in S$ shows $\mex(S)=a\oplus b$,
Hence the corollary follows.
\end{demo}
%
\begin{lem}\label{lemma:NIM_add_and_mex}
	Let $S$ and $T$ be proper subsets of $\Non$ such that $\mex(S)=a$ and $\mex(T)=b$.
	Then we have the following equation:
	\begin{equation*}
		a \nimadd b
		=
		\mex(
			\{ s \nimadd b \, | \, s \in S \}
			\cup
			\{ a \nimadd t \, | \, t \in T \}
		).
	\end{equation*}
\end{lem}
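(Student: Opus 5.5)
We show two things about $U:=\{s\nimadd b \mid s\in S\}\cup\{a\nimadd t\mid t\in T\}$: first, $a\nimadd b\notin U$; second, every $c<a\nimadd b$ lies in $U$. Together these give $\mex(U)=a\nimadd b$. Throughout we replace $\nimadd$ by $\oplus$ using Theorem~\ref{thm:nimaddd=oplus}, and we use the group properties listed in Proposition~\ref{prop:FT_NIM_add}.

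For the exclusion step, suppose $s\nimadd b=a\nimadd b$ for some $s\in S$. Cancelling $b$ (property \ref{property:injectivity_of_NIM_add}) gives $s=a=\mex(S)$, which contradicts $s\in S$. In the same way, $a\nimadd t=a\nimadd b$ forces $t=b=\mex(T)\in T$, which is impossible. Hence $a\nimadd b\notin U$.

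For the covering step, take any $c<a\nimadd b=a\oplus b$. By the lemma preceding Theorem~\ref{thm:nimaddd=oplus}, there is either some $a'<a$ with $a'\oplus b=c$, or some $b'<b$ with $a\oplus b'=c$. In the first case, $a'<a=\mex(S)$ means $a'\in S$, since $\mex(S)$ is the least integer not in $S$. So $c=a'\nimadd b\in U$. The second case is symmetric: $b'\in T$, so $c=a\nimadd b'\in U$. Thus $\{0,1,\dots,a\nimadd b-1\}\subseteq U$, and with the exclusion step this gives $\mex(U)=a\nimadd b$.

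The only substantive ingredient is the covering step. It relies entirely on the already-proved lemma that every $c<a\oplus b$ can be reached by lowering one coordinate, together with the identification $\nimadd=\oplus$. We also need that every integer below a $\mex$ belongs to the set, but that is immediate from the definition of $\mex$. This lemma is exactly what is used in the proof of Theorem~\ref{gamewathm}, and it also yields the multi-set version \eqref{eq:NIM_add_and_mex_multi} by induction on the number of sets.
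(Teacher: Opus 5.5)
Your proof is correct. Its two-part structure matches the paper's: you exclude $a\nimadd b$ from the set via injectivity, and you show every smaller value is covered. The difference is in how you justify the covering step.

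The paper never passes to binary. It notes that $\{a'\nimadd b\mid a'<a\}\cup\{a\nimadd b'\mid b'<b\}$ is contained in your set $U$, because every integer below $\mex(S)$ lies in $S$, and likewise for $T$. By the very definition \eqref{eq:nimadd}, the mex of this smaller set is $a\nimadd b$. Proposition~\ref{prop:NIM_subset}(i) then gives $a\nimadd b\le\mex(U)$. Part (ii), combined with property~\ref{property:injectivity_of_NIM_add} of Proposition~\ref{prop:FT_NIM_add}, upgrades this to equality.

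So the fact you import from the lemma preceding Theorem~\ref{thm:nimaddd=oplus} is already immediate from the recursive definition of $\nimadd$. That fact is that every $c<a\nimadd b$ equals $a'\nimadd b$ for some $a'<a$, or $a\nimadd b'$ for some $b'<b$. Your detour through $\oplus$ is therefore unnecessary.

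Your route is still legitimate and not circular, since the proof of Theorem~\ref{thm:nimaddd=oplus} does not use this lemma. It also yields explicit witnesses, $a'=b\oplus c$ or $b'=a\oplus c$. The paper's route is lighter: it needs only the mex definition and injectivity. It would also hold verbatim for any operation defined by the recursion \eqref{eq:nimadd}, before any binary description is available.
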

\begin{demo}{Proof}
	First,
	the inequality
	$(\textrm{LHS}) \leq (\textrm{RHS})$
	follows from proposition~\ref{prop:NIM_subset}
	because
	a non-negative $x$ belongs to $S$ (resp. $T$)
	if $x$ is less than $\mex(S)$ (resp. $\mex(T)$).
	From the property~\eqref{property:injectivity_of_NIM_add} 
	of Proposition~\ref{prop:FT_NIM_add},
	the NIM-sum
	$\mex(S) \nimadd \mex(T)$ belongs to
	neither
	$\{ a \nimadd \mex(T) \, | \, a \in S \}$
	nor
	$\{ \mex(S) \nimadd b \, | \, b \in T \}$.
	Therefore
	the equation
	$(\textrm{LHS}) = (\textrm{RHS})$
	follows.
\end{demo}
%
The following corollary is easy to prove by induction.
\begin{cor}\label{cor:NIM_add_and_mex}
	Let $n$ be a positive integer, and
	$S_{1}, S_{2}, \ldots , S_{n}$ proper subsets of $\Non$
	such that $\mex S_{i}=a_{i}$ for $1\leq i\leq n$.
	Then we have the following equation:
	\begin{equation}\label{eq:NIM_add_and_mex_multi}
		\overset{n}{\underset{i=1}{\nimsum}} a_{i}
		=
		\mex\Bigl(
			\bigcup\limits_{i=1}^{n}
			\Bigl\{
				\underset{
					\substack{
						1 \leq k \leq n	\\
						k \neq i
					}
				}{
				\nimsum
				}
				a_{k}
				\nimadd s
			\, \Big| \,
				s \in {S}_{i}
			\Bigr\}
		\Bigr)
		.
	\end{equation}
\end{cor}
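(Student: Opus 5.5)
The statement is Corollary~\ref{cor:NIM_add_and_mex}, so I will prove \eqref{eq:NIM_add_and_mex_multi} by induction on $n$, with Lemma~\ref{lemma:NIM_add_and_mex} as the inductive step.

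For $n=1$ the sum on the left is $a_{1}$. On the right the inner NIM-sum over $k\neq 1$ is empty, hence $0$. So the right-hand side is $\mex\{0\nimadd s\mid s\in S_{1}\}=\mex(S_{1})=a_{1}$, using $0\nimadd s=s$ from Proposition~\ref{prop:FT_NIM_add}.

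For $n\geq 2$, assume the identity for $n-1$. Set $b=\nimsum_{i=1}^{n-1}a_{i}$ and
\[
T=\bigcup_{i=1}^{n-1}\Bigl\{\underset{k\leq n-1,\,k\neq i}{\nimsum}a_{k}\nimadd s\,\Big|\, s\in S_{i}\Bigr\}.
\]
By the induction hypothesis $\mex(T)=b$. In particular $T$ is a proper subset of $\Non$, since $b\notin T$.

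Apply Lemma~\ref{lemma:NIM_add_and_mex} to the pair $(T,S_{n})$, whose mex values are $b$ and $a_{n}$. This gives
\[
b\nimadd a_{n}=\mex\bigl(\{t\nimadd a_{n}\mid t\in T\}\cup\{b\nimadd s\mid s\in S_{n}\}\bigr).
\]
Now rewrite the two sets. By associativity and commutativity (Proposition~\ref{prop:FT_NIM_add}), an element $t\nimadd a_{n}$ with $t$ coming from $S_{i}$ ($i\leq n-1$) equals $\nimsum_{k\neq i,\,k\leq n}a_{k}\nimadd s$. This is exactly the $i$-th set in the union on the right of \eqref{eq:NIM_add_and_mex_multi}. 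The second set $\{b\nimadd s\mid s\in S_{n}\}$ is the $i=n$ term. The left-hand side is $\nimsum_{i=1}^{n}a_{i}$, which completes the induction.

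No step is a real obstacle. The only points needing care are the bookkeeping of the reindexing and checking that $T$ is a proper subset of $\Non$, so that Lemma~\ref{lemma:NIM_add_and_mex} applies; both are handled above.
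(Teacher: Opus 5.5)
Your proposal is correct and is exactly the induction on $n$ via Lemma~\ref{lemma:NIM_add_and_mex} that the paper means when it says the corollary ``is easy to prove by induction.'' The base case, the application of the lemma to the pair $(T,S_{n})$, and the reindexing of the two sets into the $n$-fold union are all carried out correctly.
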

%
%
%
%
\begin{prop}
Let $a,b,c,a_{1},a_{2}\in\Non$.
Then we have
\begin{enumerate}[label=(\roman*)] 
\item
$a\nimmul 0=0$, $a\nimmul 1=a$,
\item
$a \nimmul b=b \nimmul a$,
\item
If $b\neq0$, then $a_{1} \nimmul b=a_{2} \nimmul b$ if and only if $a_{1}~a_{2}$,
\item
$(a \nimadd b) \nimmul c = a \nimmul c \nimadd b \nimmul c$,
\item
$(a \nimmul b) \nimmul c = a \nimmul (b \nimmul c)$.
\end{enumerate} 
\end{prop}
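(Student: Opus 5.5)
The plan is to prove every item directly from the recursive definition \eqref{eq:NIM-mul}, by induction on the size of the arguments, using the properties of $\nimadd$ from Proposition~\ref{prop:FT_NIM_add} (equivalently $\nimadd=\oplus$ by Theorem~\ref{thm:nimaddd=oplus}). Items (i) and (ii) are immediate, as remarked after Definition~\ref{def:NimMul}.
\begin{itemize}
\item For $a\nimmul 0$ the index set in \eqref{eq:NIM-mul} is empty, so the value is $\mex(\emptyset)=0$.
\item For $a\nimmul 1$ only $b'=0$ occurs, so the set is $\{a'\nimmul 1\mid a'<a\}$, which equals $\{0,\dots,a-1\}$ by induction on $a$; hence $a\nimmul 1=a$.
\item Commutativity follows by induction on $a+b$, since the defining set is symmetric in $(a,a')$ and $(b,b')$ once the smaller products commute.
\end{itemize}

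For (iii) I read the statement as ``$a_1\nimmul b=a_2\nimmul b$ iff $a_1=a_2$''. Suppose $a_2<a_1$ and $b\neq 0$. Take $a'=a_2$ and $b'=0$ in the defining set of $a_1\nimmul b$. Using $x\nimmul 0=0$, that set then contains
\[
a_2\nimmul b\nimadd a_1\nimmul 0\nimadd a_2\nimmul 0=a_2\nimmul b .
\]
Since $a_1\nimmul b$ is the $\mex$ of that set, it differs from every element of the set, so $a_1\nimmul b\neq a_2\nimmul b$. No induction is needed here.

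For (iv), distributivity, I prove $(a\nimadd b)\nimmul c=a\nimmul c\nimadd b\nimmul c$ by induction on $a+b+c$. Write $x=a\nimadd b$ and $R=a\nimmul c\nimadd b\nimmul c$. There are two directions.
\begin{itemize}
\item \emph{$R$ is not an option of $x\nimmul c$.} An option is $x'\nimmul c\nimadd x\nimmul c'\nimadd x'\nimmul c'$ with $x'<x$ and $c'<c$. By the lemma preceding Theorem~\ref{thm:nimaddd=oplus}, $x'$ can be written as $a'\nimadd b$ with $a'<a$, or as $a\nimadd b'$ with $b'<b$. Expanding by the induction hypothesis, if the option equalled $R$, then cancelling via $u\nimadd u=0$ would give an identity of the form $a\nimmul c=\text{option-expression}$ with $a'<a$, $c'<c$ (or the analogue for $b$). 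That contradicts the fact that $a\nimmul c$ is a $\mex$ of exactly such expressions.
\item \emph{Every $v<R$ is an option.} Use Corollary~\ref{cor:NIM_add_and_mex}, i.e.\ Lemma~\ref{lemma:NIM_add_and_mex}, to express $R$ itself as
\[
R=\mex\bigl(\{s\nimadd b\nimmul c\mid s\in S_a\}\cup\{a\nimmul c\nimadd t\mid t\in S_b\}\bigr),
\]
where $S_a$ and $S_b$ are the defining sets of $a\nimmul c$ and $b\nimmul c$. Each such element, expanded by induction, equals an option $x'\nimmul c\nimadd x\nimmul c'\nimadd x'\nimmul c'$ with $x'=a'\nimadd b<x$ (resp.\ $a\nimadd b'$). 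This gives $\mathrm{RHS}\subseteq$ options, which is exactly what is needed.
\end{itemize}

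For (v), associativity, I use the same two-inclusion scheme, by induction on $a+b+c$, with distributivity (iv) now available. The options of $(a\nimmul b)\nimmul c$ are sums over $d<a\nimmul b$ and $c'<c$. Each $d<a\nimmul b$ is itself of the form $a'\nimmul b\nimadd a\nimmul b'\nimadd a'\nimmul b'$. Expanding with (iv) turns every option into a symmetric ``trilinear'' expression
\[
\nimsum (\text{products of } a\text{ or }a',\ b\text{ or }b',\ c\text{ or }c')
\]
over the seven nonempty choices of primed coordinates. That expression is visibly symmetric under regrouping, so it is also an option of $a\nimmul(b\nimmul c)$, and conversely. I expect the main obstacle to be this step: showing that the option sets coincide, and not merely that one contains the other. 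This requires that every $d<a\nimmul b$ arises in the $\mex$-set of $a\nimmul b$, which holds by the definition of $\mex$. The repeated-term bookkeeping is what I would check most carefully, since it is where the exponent-$2$ property of $\nimadd$ must be invoked.
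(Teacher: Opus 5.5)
Items (i)--(iii) are correct; the choice $a'=a_2$, $b'=0$ for (iii) is neat. The first half of (iv), showing that $R$ is not an option of $x\nimmul c$, is also correct. The second half of (iv) fails as written. You claim that the element $s\nimadd b\nimmul c$ built from $a'<a$, $c'<c$ is an option of $x\nimmul c$ via $x'=a'\nimadd b<x$, but $a'<a$ does not give $a'\nimadd b<a\nimadd b$ (take $a=b=1$, $a'=0$). For $a=b=c=1$ the set whose $\mex$ is $R$ is $\{1\}$, while $x=0$, so $x\nimmul c=0\nimmul 1$ has no options at all. Hence ``RHS $\subseteq$ options'' is false.

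The missing idea is that the $\mex$ does not change when the index set contains oversized values: if $X\subseteq\Non$ is any set with $\mex X=x$, then
\[
x\nimmul c=\mex\{x'\nimmul c\nimadd x\nimmul c'\nimadd x'\nimmul c' \mid x'\in X,\ c'<c\}.
\]
To see this, note that the terms with $x'<x$ form exactly the defining set of $x\nimmul c$, because $\{0,\dots,x-1\}\subseteq X$. A term with $x'>x$ cannot equal $x\nimmul c$: the pair $(x,c')$ is admissible in the definition of $x'\nimmul c$, so $x'\nimmul c\neq x\nimmul c\nimadd x'\nimmul c'\nimadd x\nimmul c'$. Proposition~\ref{prop:NIM_subset}(ii) then gives the claim. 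Apply it with $X=\{a'\nimadd b\mid a'<a\}\cup\{a\nimadd b'\mid b'<b\}$, whose $\mex$ is $a\nimadd b$ by \eqref{eq:nimadd}. Expanding by induction turns the set into exactly the one in Lemma~\ref{lemma:NIM_add_and_mex}, which proves (iv) at once.

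The same oversight breaks (v), and there the coincidence of option sets that you hope to establish is actually false. An option of $(a\nimmul b)\nimmul c$ is your seven-term expression for some $(a',b',c')$ subject to the side condition $a'\nimmul b\nimadd a\nimmul b'\nimadd a'\nimmul b'<a\nimmul b$. For $a\nimmul(b\nimmul c)$ the side condition is instead $b'\nimmul c\nimadd b\nimmul c'\nimadd b'\nimmul c'<b\nimmul c$. The expression is symmetric, but the side conditions are not.

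For $(a,b,c)=(1,2,3)$, the options of $(1\nimmul 2)\nimmul 3=2\nimmul 3$ form $\{0,2,3\}$, while $1\nimmul(2\nimmul 3)=1\nimmul 1$ has the single option $0$; the triple $(3,2,1)$ gives the reverse situation. The danger is not whether every $d<a\nimmul b$ occurs in the defining set of $a\nimmul b$ (it does). It is that this set also contains values $d>a\nimmul b$.

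With the lemma above, take $X$ to be the defining set of $a\nimmul b$, and symmetrically the defining set of $b\nimmul c$ in the second factor. Then both products equal the $\mex$ of the \emph{unconstrained} set of seven-term expressions over all $a'<a$, $b'<b$, $c'<c$, and associativity follows. (The paper states this proposition without proof, so there is no argument there to compare with.)
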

%
%
%
%
%
%
\section{A Characterization of the Ruler Sequence}\label{sc:RC}
The purpose of this appendix is to prove Lemma~\ref{lem:chain-ruler-recurrence},
which characterize the ruler sequence by means of minimum excluded value.
For this purpose, we define 
\begin{equation}
\fnH{m}{n}=\overset{n-1}{\underset{x=m}{\,\,\,\nimsum}}\fnh{x}
\end{equation}
for positive integers $m$ and $n$ such that $m\leq n$,
where $\fnh{x}$ is the ruler sequence defined in Definition~\ref{def:binary}.
Note that $\fnH{n}{n}=0$ since empty NIM-sum is equal to $0$.
For example, $\fnH{3}{6}=\fnh{3}\oplus\fnh{4}\oplus\fnh{5}=1\oplus4\oplus1=4$.
Further, for a nonnegative integer $n$,
let $\fnS{n}:=\left\{\fnH{x}{n} \,\middle|\, x=1,\dots,n\right\}$.
For instance,
$\fnS{6}=\left\{ \fnH{1}{6},\fnH{2}{6},\fnH{3}{6},\fnH{4}{6},\fnH{5}{6},\fnH{6}{6} \right\}
=\left\{ 0,1,4,5,6,7\right\}$.
%
\begin{lem}\label{lem:nim-nonzero}
We have
\[
\fnH{m}{n}\neq0
\]
for all integers $m,n$ such that $1\leq m<n$.
\end{lem}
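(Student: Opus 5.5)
We need $\fnH{m}{n}=\fnh{m}\oplus\fnh{m+1}\oplus\cdots\oplus\fnh{n-1}\neq 0$ for $1\le m<n$. The plan is to look at the largest power of $2$ occurring among the summands and show it occurs an odd number of times. Since each $\fnh{x}$ is a single power of $2$, the nim-sum (bitwise XOR, by Theorem~\ref{thm:nimaddd=oplus}) has bit $j$ equal to the parity of $\#\{x\in[m,n-1] : \placemin{x}=j\}$. So it suffices to exhibit one $j$ for which this count is odd.

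Take $j:=\max\{\placemin{x} \mid m\le x\le n-1\}$. The key claim is that exactly one $x$ in the interval $[m,n-1]$ attains $\placemin{x}=j$. Suppose two distinct integers $x<x'$ in the interval both had $\placemin{}=j$, i.e.\ both are odd multiples of $2^{j}$. Then $x=2^{j}a$ and $x'=2^{j}a'$ with $a<a'$ both odd, so $a+1$ is even and lies in $(a,a']$. Hence $y:=2^{j}(a+1)$ satisfies $x<y\le x'$, so $y$ lies in the interval, and $\placemin{y}\ge j+1$. This contradicts the maximality of $j$. Therefore the count of $x$ with $\placemin{x}=j$ is exactly $1$, which is odd.

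Consequently the bit $\digit{j}{\fnH{m}{n}}=1$, so $\fnH{m}{n}\ge 2^{j}>0$; in fact $\fnH{m}{n}$ lies in $[2^{j},2^{j+1})$, since all other summands are smaller powers of $2$. The interval is nonempty because $m<n$, so $j$ is well defined. There is no genuine obstacle here. The only point requiring care is the uniqueness of the maximal $2$-adic valuation in a block of consecutive integers, which the odd-multiples argument handles. The refined statement that $2^{j}$ is the leading bit will likely also be useful afterwards, when showing that $\fnS{n}$ misses $\fnh{n}$ in proving Lemma~\ref{lem:chain-ruler-recurrence}.
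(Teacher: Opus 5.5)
Your proof is correct and follows the same route as the paper: you take the maximal $2$-adic valuation $j$ over $[m,n-1]$, show it is attained by exactly one $x$, and conclude that bit $j$ survives in the XOR. Your uniqueness step is cleaner than the paper's. The paper introduces an auxiliary $r_0$ with $2^{r_0}\le n-m<2^{r_0+1}$ and uses it to force two attainers to be consecutive multiples $x_0$ and $x_0+2^{r_0}$; you instead place the even multiple $2^{j}(a+1)$ between two odd multiples of $2^{j}$ and contradict maximality directly.
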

\begin{demo}{Proof}
Let $r_{0}$ be the largest integer such that the interval length $n-m$ satisfy $2^{r_{0}}\leq n-m$.
Let $r_{1}:=\max\{\placemin{x} \,|\, m\leq x\leq n-1\}$.
Then we have $r_{1}\geq r_{0}$ since there exists an integer divisible by $2^{r_{0}}$ 
among $2^{r_{0}}$ consecutive integers.
\par
If $x\in[m, n-1]$ such that $\placemin{x}=r_{1}$ is unique,
then $\fnH{m}{n}\neq0$ 
since the binary digit in position $r_{1}$ does not cancel and therefore remains in $\fnH{m}{n}$.
\par\smallskip
Next, we show that there cannot be two distinct integers $x\in[m, n-1]$ such that $\placemin{x}=r_{1}$.
Suppose, for the sake of contradiction, that there are two such integers.
Then necessarily $r_{1}=r_{0}$,
and there can be at most two integers $x$ with $\placemin{x}=r_{0}$ because of the length of the interval.
Hence there would exist $m\leq x_{0}<x_{1}\leq n-1$ such that $x_{1}=x_{0}+2^{r_{0}}$ and $\placemin{x_{0}}=\placemin{x_{1}}=r_{0}$.
By assumption, there exists an odd integer $y_{0}\in\Non$ such that $x_{0}=2^{r_{0}}y_{0}$,
which implies $x_{1}=2^{r_{0}}(y_{0}+1)$.
Since $y_{0}+1$ is even, we obtain $\placemin{x_{1}}>r_{0}$,
contradicting the assumption $\placemin{x}=r_{0}$.
This contradiction completes the proof.
%
%
\end{demo}
%
%
%
\begin{lem}\label{lem:chain-ruler}
We have the followings.
\begin{enumerate}[label=\roman*)]
\item\label{it:chain1}
For fixed $n$, the numbers $\fnH{m}{n}$ ($m=1,\dots,n$) are all distinct.
\item\label{it:chain2}
$\fnS{2^{k}}=\{0,1,\cdots,2^{k}-1\}$ for any $k\in\Non$.
\item\label{it:chain3}
If $k=\placemin{n}$,
then $\fnS{2^{k}}\subseteq \fnS{n}$.
\item\label{it:chain4}
If $k=\placemin{n}$,
then $2^{k}\not\in \fnS{n}$.
\end{enumerate}
\end{lem}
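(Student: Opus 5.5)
Throughout I write $\Phi^{n}_{m}=\fnH{m}{n}$ and use $\oplus$ for NIM-addition (Theorem~\ref{thm:nimaddd=oplus}). The basic identity is
\[
\fnH{m}{n}\oplus\fnH{m'}{n}=\fnH{m}{m'}
\qquad (m\le m'\le n),
\]
which holds because $\oplus$ has exponent $2$, so the common tail $\sum_{x=m'}^{n-1}$ cancels.

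For \ref{it:chain1}, take $1\le m<m'\le n$. By the identity above, $\fnH{m}{n}\oplus\fnH{m'}{n}=\fnH{m}{m'}$, and this is nonzero by Lemma~\ref{lem:nim-nonzero}. Hence $\fnH{m}{n}\ne\fnH{m'}{n}$.

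For \ref{it:chain2} I will argue by a direct bound. For $x<2^{k}$ we have $\placemin{x}<k$, so $\fnh{x}<2^{k}$. Therefore every $\fnH{m}{2^{k}}$ with $1\le m\le 2^{k}$ is an XOR of numbers below $2^{k}$ and so lies in $\{0,\dots,2^{k}-1\}$. By \ref{it:chain1} these $2^{k}$ values are distinct, so they fill the whole set. (The case $k=0$ is trivial: $\fnS{1}=\{0\}$.)

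For \ref{it:chain3}, write $n=2^{k}u$ with $u$ odd, and compare $\fnH{n-2^{k}+j}{n}$ with $\fnH{j}{2^{k}}$ for $1\le j\le 2^{k}$. The map $x\mapsto x+(n-2^{k})$ sends $[j,2^{k}-1]$ onto $[n-2^{k}+j,\,n-1]$. Since $n-2^{k}=2^{k}(u-1)$ is divisible by $2^{k}$ (indeed by $2^{k+1}$, though $2^{k}$ suffices), and every $x<2^{k}$ has $\placemin{x}<k$, we get $\placemin{x+(n-2^{k})}=\placemin{x}$. So $\fnh$ is preserved termwise, the two NIM-sums agree, and $\fnS{2^{k}}\subseteq\fnS{n}$. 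One must check $n-2^{k}+j\ge1$, which holds since $n\ge2^{k}$.

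For \ref{it:chain4}, which I expect to be the main obstacle, I need $\fnH{m}{n}\ne2^{k}$ for every $m$. There are two cases.

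If $m>n-2^{k}$, then $\fnH{m}{n}$ is one of the values from \ref{it:chain3} and so lies below $2^{k}$. The case $m=n$ gives $0$.

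If $m\le n-2^{k}$, I split the sum as $\fnH{m}{n}=\fnH{m}{n-2^{k}}\oplus\fnH{n-2^{k}}{n}$. Here $\fnH{n-2^{k}}{n}$ equals $\fnH{0}{2^{k}}$ shifted, i.e.\ $\fnh{n-2^{k}}\oplus\fnH{1}{2^{k}}$. Computing this precisely needs care, since $n-2^{k}$ may be $0$. A cleaner route is to use \ref{it:chain1} together with a counting/parity argument. The last step is to show $2^{k}\ne\fnH{m}{n}$ via the identity
\[
\fnH{m}{n}\oplus\fnH{m}{n+1}=\fnh{n}=2^{k}.
\]
Suppose $\fnH{m}{n}=2^{k}$. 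Then $\fnH{m}{n+1}=0$ with $m<n+1$, contradicting Lemma~\ref{lem:nim-nonzero}. This single argument handles all $m\le n$ directly, so the case split above becomes unnecessary and \ref{it:chain4} follows immediately.
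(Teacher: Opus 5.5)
Your proof is correct and follows the paper's argument essentially step for step. You prove (i) from the telescoping identity and Lemma~\ref{lem:nim-nonzero}, (ii) from the bound $\fnh{x}<2^{k}$ together with distinctness, (iii) by noting that shifting by a multiple of $2^{k}$ preserves $\placemin{\cdot}$, and (iv) by observing that $\fnH{m}{n+1}=\fnH{m}{n}\oplus\fnh{n}=0$ would contradict Lemma~\ref{lem:nim-nonzero}. The abandoned case split at the start of your (iv) is unnecessary and should be deleted, since the final argument already covers every $m$.
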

\begin{demo}{Proof}
\begin{enumerate}[label=\roman*)]
\item
If $1\leq m_{1}< m_{2}\leq n$,
then we have
\[
\fnH{m_{1}}{m_{2}}\oplus\fnH{m_{2}}{n}=\fnH{m_{1}}{n}.
\]
Since Lemma~\ref{lem:nim-nonzero} implies $\fnH{m_{1}}{m_{2}}\neq0$,
we obtain $\fnH{m_{1}}{n}\neq\fnH{m_{2}}{n}$.
\item
If $1\leq x<2^{k}$, then $\placemin{x}\leq\placemax{x}<k$.
Consequently, each term $\fnh{x}=2^{\placemin{x}}$ appearing in the NIM-sum of $\fnH{m}{2^{k}}$ is strictly less than $2^k$, and therefore
$0\leq \fnH{m}{2^{k}}<2^{k}$ for any $1\leq m\leq 2^{k}$.
Since the values $\fnH{m}{2^{k}}$ ($1\leq m\leq 2^{k}$) are pairwise distinct, 
they must form exactly the set $\{0,1,\cdots,2^{k}-1\}$.
\item
Since $k=\placemin{n}$,
the only binary digits in which $n-m$ and $2^{k}-m$
may differ are those in positions strictly larger than $k$ for $m=0,1,\dots,2^{k}-1$.
Hence the position of the least significant nonzero binary digit is the same for both numbers,
which implies that $\fnh{n-m}=\fnh{2^{k}-m}$
for $m=0,1,\dots,2^{k}-1$.
Consequently, we obtain $\fnH{n-m}{n}=\fnH{2^{k}-m}{2^{k}}$ for $m=0,1,\dots,2^{k}-1$.
This proves the claim.
\item
Suppose, for the sake of contradiction, that $2^{k}\in \fnS{n}$.
Then there exists a positive integer $m<n$ such that $\fnH{m}{n}=2^{k}$.
Since $k=\placemin{n}$,
we have $\fnH{n}{n+1}=\fnh{n}=2^{k}$.
Hence,
\[
\fnH{m}{n+1}=\fnH{m}{n}\oplus \fnH{n}{n+1}=2^{k}\oplus 2^{k}=0.
\]
This contradicts Lemma~\ref{lem:nim-nonzero}.
Therefore, we conclude that $2^{k}\not\in \fnS{n}$.
\end{enumerate}
This completes the proof of the lemma.
\end{demo}
Lemma~\ref{lem:chain-ruler}~\ref{it:chain3} and Lemma~\ref{lem:chain-ruler}~\ref{it:chain4} show that $\mex \fnS{n}=2^{k}$.
Since $f(n)=\mex \fnS{n}$,
this completes the proof of Lemma~\ref{lem:chain-ruler-recurrence}.
%
%
%
%
%

%
%
%
%
\addtocontents{toc}{\protect\setcounter{tocdepth}{0}} 
\section*{Acknowledgement}
The authors would like to express their sincere gratitude to Professor Takeshi Suzuki 
for his helpful comments and valuable suggestions.
\addtocontents{toc}{\protect\setcounter{tocdepth}{2}}
%
%
%
%

\end{document}